\newcommand{\ds}{\,ds}
\newcommand{\comment}[1]{}
\newcommand{\C}{\mathbb{C}}
\newcommand{\rt}{\rtimes}
\newcommand{\eps}{\varepsilon}
\newcommand{\supp}{\text{supp}}
\newcommand{\norm}[1]{\left\Vert #1 \right\Vert}
\newcommand{\normr}[1]{\left\Vert #1 \right\Vert^{\mathcal R}}
\newcommand{\brackets}[1]{\left( #1 \right)}
\newcommand{\integrate}{{\mathcal I}^\mr}
\newcommand{\separate}{{\mathcal S}^\mr}
\newcommand{\intform}{\pi \rtimes U}
\newcommand{\aintform}{U \rtimes \pi}
\newcommand{\intformr}{(\pi \rtimes U)^\mathcal R}
\newcommand{\Cr}{{C^\mr}}
\newcommand{\mr}{\mathcal R}
\newcommand{\ma}{\mathcal A}
\newcommand{\qr}{q^{\mathcal R}}
\newcommand{\nur}{\nu^\mr}
\newcommand{\Nr}{N^\mr}
\newcommand{\sigmar}{\sigma^{\mathcal R}}
\newcommand{\crosprod}{(A \rt_\alpha G)^\mathcal{R}}
\newcommand{\Aut}{\textup{Aut}}
\newcommand{\Inv}{\textup{Inv}}
\newcommand{\dynsys}{(A,G,\alpha)}
\newcommand{\covrep}{(\pi,U)}
\newcommand{\Covrep}{\textup{Covrep}}
\newcommand{\Repndb}{\ensuremath{\textup{Rep}_{\textup{nd,b}}}}
\newcommand{\bclass}{\mathcal X}
\newcommand{\Covreprndc}{\ensuremath{\Covrep_{\textup{nd,c}}^\mr}}
\newcommand{\LCA}{{\mathcal M}_l(\mathcal A)}
\newcommand{\RCA}{{\mathcal M}_r(\mathcal A)}
\newcommand{\DCA}{{\mathcal M}(\mathcal A)}
\newcommand{\id}{\textup{id}}
\newcommand{\triv}{\textup{triv}}
\newcommand{\End}{\textup{End\,}}
\theoremstyle{plain}
\newtheorem{theorem}{Theorem}[section]
\newtheorem{prop}[theorem]{Proposition}
\newtheorem{lemma}[theorem]{Lemma}
\newtheorem{corol}[theorem]{Corollary}
\theoremstyle{definition}
\newtheorem{defn}[theorem]{Definition}
\newtheorem{remark}[theorem]{Remark}
\numberwithin{equation}{section}
\begin{document}

\title{Crossed products of Banach algebras. I.}

\author{Sjoerd Dirksen}
\address{Sjoerd Dirksen, Delft Institute of Applied Mathematics, Delft University of Technology, P.O.\ Box 5031, 2600 GA Delft, The Netherlands}
\email{s.dirksen@tudelft.nl}

\author{Marcel de Jeu}
\address{Marcel de Jeu, Mathematical Institute, Leiden University, P.O.\ Box 9512, 2300 RA Leiden, The Netherlands}
\email{mdejeu@math.leidenuniv.nl}

\author{Marten Wortel}
\address{Marten Wortel, Mathematical Institute, Leiden University, P.O.\ Box 9512, 2300 RA Leiden, The Netherlands}
\email{wortel@math.leidenuniv.nl}

\subjclass[2000]{Primary 47L65; Secondary 46H25, 22D12, 22D15, 46L55}

\keywords{Crossed product, Banach algebra dynamical system, representation in a Banach space, covariant representation, group Banach algebra}


\begin{abstract}
We construct a crossed product Banach algebra from a Banach algebra dynamical system $\dynsys$ and a given uniformly bounded class $\mr$ of continuous covariant Banach space representations of that system. If $A$ has a bounded left approximate identity, and $\mr$ consists of non-degenerate continuous covariant representations only, then the non-degenerate bounded representations of the crossed product are in bijection with the non-degenerate $\mr$-continuous covariant representations of the system. This bijection, which is the main result of the paper, is also established for involutive Banach algebra dynamical systems and then yields the well-known representation theoretical correspondence for the crossed product $C^*$-algebra as commonly associated with a $C^*$-algebra dynamical system as a special case. Taking the algebra $A$ to be the base field, the crossed product construction provides, for a given non-empty class of Banach spaces, a Banach algebra with a relatively simple structure and with the property that its non-degenerate contractive representations in the spaces from that class are in bijection with the isometric strongly continuous representations of $G$ in those spaces. This generalizes the notion of a group $C^*$-algebra, and may likewise be used to translate issues concerning group representations in a class of Banach spaces to the context of a Banach algebra, simpler than $L^1(G)$, where more functional analytic structure is present.
\end{abstract}

\maketitle

\section{Introduction}\label{sec:intro}

The theory of crossed products of $C^*$-algebras started with the papers by Turumaru \cite{turumaru} from 1958 and Zeller-Meier from 1968 \cite{zeller-meier}. Since then the theory has been extended extensively, as is attested by the material in Pedersen's classic \cite{pedersen} and, more recently, in Williams' monograph \cite{williams}.
Starting with a $C^*$-dynamical system $\dynsys$, where $A$ is a $C^*$-algebra, $G$ is a locally compact group, and $\alpha$ a strongly continuous action of $G$ on $A$ as involutive automorphisms, the crossed product construction yields a $C^*$-algebra $A\rt_\alpha G$ which is built from these data. Thus the crossed product construction provides a means to construct examples of $C^*$-algebras from, in a sense, more elementary ingredients.
One of the basic facts for a crossed product $C^*$-algebra $A\rt_\alpha G$ is that the non-degenerate involutive representations of this algebra on  Hilbert spaces are in one-to-one correspondence with the non-degenerate involutive continuous covariant representations of $\dynsys$, i.e., with the pairs $\covrep$, where $\pi$ is a non-degenerate involutive representation of $A$ on a Hilbert space, and $U$ is a unitary strongly continuous representations of $G$ on the same space, such that the covariance condition $\pi(\alpha_s(a))=U_s\pi(a)U_s^{-1}$ is satisfied, for $a\in A$, and $s\in G$.

This paper contains the basics for the natural generalization of this construction to the general Banach algebra setting. Starting with a Banach algebra dynamical system $\dynsys$, where $A$ is a Banach algebra, $G$ is a locally compact group, and $\alpha$ a strongly continuous action of $G$ on $A$ as not necessarily isometric automorphisms, we want to build a Banach algebra of crossed product type from these data. Moreover, we want the outcome to be such that (suitable) non-degenerate continuous covariant representations of $\dynsys$ are in bijection with (suitable) non-degenerate bounded representations of this crossed product Banach algebra. Later in this introduction, more will be said about how to construct such an algebra, and how the construction can be tuned to accommodate a class $\mr$ of non-degenerate continuous covariant representations relevant for the case at hand. It will then also become clear what being ``suitable" means in this context. For the moment, let us oversimplify a bit and, neglecting the precise hypotheses, state that such an algebra can indeed be constructed. The precise statement is Theorem~\ref{t:bijection}, which we will discuss below.

Before continuing the discussion of crossed products of Banach algebra as such, however, let us mention our motivation to start investigating these objects, and sketch perspectives for possible future applications of our results. Firstly, just as in the case of a crossed product $C^*$-algebra, it simply seems natural as such to have a means available to construct Banach algebras from more elementary building blocks. Secondly, there are possible applications of these algebras in the theory of Banach representations of locally compact groups. We presently see two of these, which we will now discuss.

Starting with the first one, we recall that, as a special case of the correspondence for crossed product $C^*$-algebras mentioned above, the unitary strongly continuous representations of a locally compact group $G$ in Hilbert spaces are in bijection with the non-degenerate involutive representations of the group $C^*$-algebra $C^*(G)=\mathbb C\rt_\triv G$ in Hilbert spaces. It is by this fact that questions concerning, e.g.,  the existence of sufficiently many irreducible unitary strongly continuous representations of $G$ to separate its points, and, notably, the decomposition of an arbitrary unitary strongly continuous representation of $G$ into irreducible ones, can be translated to $C^*$-algebras and solved in that context \cite{dixmier}. For Banach space representations of $G$, the theory is considerably less well developed. To our knowledge, the only available general decomposition theorem, comparable to those in a unitary context, is Shiga's \cite{shiga}, stating that a strongly continuous representation of a compact group in an arbitrary Banach space decomposes in a Peter-Weyl--fashion, analogous to that for a unitary strongly continuous representation in a Hilbert space. With the results of the present paper, it is possible to construct Banach algebras  which, just as the group $C^*$-algebra, are ``tuned'' to the situation. Our main results in this direction are Theorem~\ref{t:general_bijection_trivial_algebra} and Theorem~\ref{t:group_algebra_isometric_case}. The latter, for example, yields, for any non-empty class $\bclass$ of Banach spaces, a Banach algebra $\mathcal B_\bclass(G)$, such that the non-degenerate contractive representations of $\mathcal B_\bclass(G)$ in spaces from $\bclass$ are in bijection with the isometric strongly continuous representations of $G$ in these spaces. This algebra $\mathcal B_\bclass(G)$ could be called the group Banach algebra of $G$ associated with $\bclass$, and, as will become clear in Section~\ref{subsec:trivial_algebra}, only the isometric strongly continuous representations of $G$ in the spaces from $\bclass$ are used in its construction. The analogy with the group $C^*$-algebra $C^*(G)$, which is in fact a special case, is clear. Just as is known to be the case with $C^*(G)$, one may hope that, for certain classes $\bclass$ of sufficiently well-behaved spaces, the study of $\mathcal B_\bclass(G)$ will shed light on the theory of isometric strongly continuous representations of $G$ in the spaces from $\bclass$. For comparison, we recall  the well-known fact \cite[Assertion~VI.1.32]{helemskii}, \cite[Proposition~2.1]{johnson} that the non-degenerate bounded representations of $L^1(G)$ in a Banach space are in bijection with the uniformly bounded strongly continuous representations of $G$ in that Banach space. So, certainly, there is already a Banach algebra available to translate questions concerning group representations to, but the point is that it is very complicated, simply because $L^1(G)$ apparently carries the information of all such representations of $G$ in \emph{all} Banach spaces. One may hope that, for certain choices of $\bclass$, an algebra such as $\mathcal B_\bclass(G)$, the construction of which uses no more data than evidently necessary, has a sufficiently simpler structure than $L^1(G)$ to admit the development of a reasonable representation theory, and hence for the isometric strongly continuous representations of $G$ in these spaces, thus paralleling the case where $\bclass$ consists of all Hilbert spaces and $B_\bclass(G)=C^*(G)$. Aside, let us mention that $L^1(G)$ is, in fact, isometrically isomorphic to a crossed product $(\mathbb F\rt_\triv G)^\mr$ as in the present paper, if one chooses the class $\mr$---to be discussed below---appropriately. In that case, it is possible to infer the aforementioned bijection between the non-degenerate bounded representations of $L^1(G)$ and the uniformly bounded strongly continuous representations of $G$ from Theorem~\ref{t:bijection}, due to the fact that these representations of $G$ can then be seen to correspond to the $\mr$-continuous---also to be discussed below---representations of $(\mathbb F,G,\triv)$. In view of the further increase in length of the present paper that would be a consequence of the inclusion of these and further related results, we have decided to postpone these to the sequel \cite{crossedtwo}, including only some preparations for this at the end of Section~\ref{subsec:general_algebra_and_group}.

The second possible application in group representation theory lies in the relation between imprimitivity theorems and Morita equivalence. Whereas the construction of the group Banach algebras $B_\bclass(G)$ and establishing their basic properties could be done in a paper quite a bit shorter than the present one, the general crossed product construction and ensuing results are indeed needed for this second perspective. Starting with the involutive context, we recall that Mackey's now classical result \cite{mackeyimprimitivity} asserts that a unitary strongly continuous representation $U$ of a separable locally compact group $G$ is unitarily equivalent to an induced unitary strongly continuous representation of a closed subgroup $H$, precisely when there exists a system of imprimitivity $(G/H,U,P)$ based on the $G$-space $G/H$. The separability condition of $G$ is actually not necessary, as shown by Loomis \cite{loomis} and Blattner \cite{blattner}, and for general $G$ Mackey's imprimitivity theorem
can be reformulated as (\cite[Theorem~7.18]{rieffelinducedadvances}): $U$ is unitarily equivalent to such an induced representation precisely when there exists a non-degenerate involutive representation $\pi$ of $C_0(G/H)$ in the same Hilbert space, such that $(\pi,U)$ is a covariant representation of the $C^*$-dynamical system $(C_0(G/H),G,\textup{lt})$, where $G$ acts as left translations on $C_0(G/H)$. Using the standard correspondence for crossed products of $C^*$-algebras, one thus sees that, up to unitary equivalence, such $U$ are precisely the unitary parts of the non-degenerate involutive continuous covariant representations of the $C^*$-dynamical system $(C_0(G/H),G,\textup{lt})$ corresponding to the non-degenerate involutive representations of the crossed product $C^*$-algebra $C_0(G/H)\rt_{\textup{lt}}G$. Rieffel's theory of induction for $C^*$-algebras \cite{rieffelinducedbulletin}, \cite{rieffelinducedadvances} and Morita-equivalence \cite{rieffelmorita}, \cite{rieffelkingston} allows us to follow another approach to Mackey's theorem, as was in fact done in \cite{rieffelinducedadvances}, by proving that $C_0(G/H)\rt_{\textup{lt}}G$ and $C^*(H)$ are (strongly) Morita equivalent as a starting point. This implies that these $C^*$-algebras have equivalent categories of non-degenerate involutive representations, and working out this correspondence then yields Mackey's imprimitivity theorem. For more detailed information we refer to \cite{rieffelinducedadvances}, \cite{rieffelmorita}, and \cite{rieffelkingston}, as well as (also including significant further developments) to \cite{green}, \cite{mansfield}, \cite{raeburnwilliams}, \cite{echterhoffetal}, \cite{williams} and \cite{fellanddoran}, the latter also for Banach $^*$-algebras and Banach $^*$-algebraic bundles.

The Morita theorems in a purely algebraic context are actually more symmetric than the analogous ones in Rieffel's work. We formulate part of the results for algebras over a field $k$ (cf.\ \cite[Theorem~12.12]{faith}): If $A$ and $B$ are unital $k$-algebras, then the categories of left $A$-modules and left $B$-modules are $k$-linearly equivalent precisely when there exist bimodules ${_A}P_B$ and ${_B}Q_A$, such that $P\otimes_B Q\simeq A$ as $A$-$A$-bimodules, and $Q\otimes_A P\simeq B$ as $B$-$B$-bimodules. From the existence of such bimodules it follows easily that the categories are equivalent, since equivalence are manifestly given by $M\mapsto Q\otimes_A M$, for a left $A$-module $M$, and by $N\mapsto P\otimes_B N$, or a left $B$-module $N$. The non-trivial statement is that the converse is equally true. In Rieffel's analytical context, the role of the bimodules $P$ and $Q$ for $C^*$-algebras $A$ and $B$ is taken over by so-called imprimitivity bimodules, sometimes also called equivalence bimodules. These are $A$-$B$-Hilbert $C^*$-modules (\cite[Definition~3.1]{raeburnwilliams}), and the existence of such imprimitivity bimodules (actually, exploiting duality, only one is needed, see \cite[p.~49]{raeburnwilliams}) implies that the categories of non-degenerate involutive representations of these $C^*$-algebras are equivalent \cite[Theorem~3.29]{raeburnwilliams}. In contrast with the algebraic context, the converse is not generally true (see \cite[Remark~3.15 and Hooptedoodle~3.30]{raeburnwilliams}). This has led to the distinction between strong Morita equivalence (in the sense of existing imprimitivity bimodules) and weak Morita equivalence (in the sense of equivalent categories of non-degenerate involutive representations) of $C^*$-algebras. The work of Blecher \cite{blecher}, generalizing earlier results of Beer \cite{beer}, shows how to remedy this: if one enlarges the categories, taking them to consist of all left $A$-operator modules as objects and completely bounded $A$-linear maps as morphisms, and similarly for $B$, then symmetry is restored as in the algebraic case: the equivalence of these larger categories is then equivalent with the existence of an imprimitivity bimodule, i.e., with strong Morita equivalence of the $C^*$-algebras in the sense of Rieffel. As a further step, strong Morita equivalence was developed for operator algebras (i.e., norm-closed subalgebras of $B(H)$, for some Hilbert space $H$) by Blecher, Muhly and Paulsen in \cite{blechermuhlypaulsen}. Restoring symmetry again, Blecher proved in \cite{blecheroperatoralgebras} that, for operator algebras with a contractive approximate identity, strong Morita equivalence is equivalent to their categories of operator modules being equivalent via completely contractive functors.

A part of the well-developed theory in a Hilbert space context as mentioned above has a parallel for Banach algebras and representations in Banach spaces, but, as far as we are aware, the body of knowledge is much smaller than for Hilbert spaces.\footnote{As an illustration: as far as we know, for groups, \cite{lyubich} is currently the only available book on Banach space representations.} Induction of representations of locally compact groups and Banach algebras in Banach spaces has been investigated by Rieffel in \cite{rieffelinducedbanach}, from the categorical viewpoint that, as a functor, induction is, or ought to be, an adjoint of the restriction functor. In \cite{gronbaekimprimitivity}, Gr{\o}nb{\ae}k studies Morita equivalence for Banach algebras in a context of Banach space representations, and a Morita-type theorem \cite[Corollary~3.4]{gronbaekmorita} is established for Banach algebras with bounded two-sided approximate identities: such Banach algebras $A$ and $B$ have equivalent categories of non-degenerate left Banach modules precisely when there exist non-degenerate Banach bimodules ${_A}P_B$ and ${_B}Q_A$, such that $P{\widehat{\otimes}}_B Q\simeq A$ as $A$-$A$-bimodules, and $Q{\widehat{\otimes}}_A P\simeq B$ as $B$-$B$-bimodules. In subsequent work \cite{gronbaekmoritaselfinduced}, this result is generalized to self-induced Banach algebras, and this generalization yields an imprimitivity theorem \cite[Theorem~IV.9]{gronbaekimprimitivity} in a form quite similar to Mackey's theorem as formulated by Rieffel \cite[Theorem~7.18]{rieffelinducedadvances} (i.e., with a $C_0(G/H)$-action instead of a projection valued measure), with a continuity condition on the action of $C_0(G/H)$. The approach of this imprimitivity theorem, via Morita equivalence of Banach algebras, is therefore analogous to Rieffel's work, and here again algebras which are called crossed products make their appearance \cite[Definition~IV.1]{gronbaekimprimitivity}. Given the results in the present paper, it is natural to ask whether this imprimitivity theorem (or a variation of it) can also be derived from a surmised Morita equivalence of the crossed product Banach algebra $(C_0(G/H)\rt_{\textup{lt}} G)^\mr$ and a group Banach algebra $B_\bclass(H)$ as in the present paper (for suitable $\mr$ and $\bclass$), and what the relation is between the algebras in  \cite[Definition~IV.1]{gronbaekimprimitivity}, also called crossed products, and the crossed product Banach algebras in the present paper. We expect to investigate this in the future, also taking the work of De Pagter and Ricker \cite{depagterricker} into account. In that paper, it is shown that, for certain bounded Banach space representations (including all bounded representations in reflexive spaces\footnote{In fact: including all bounded representations in spaces not containing a copy of $c_0$, see \cite[Corollary~2.16]{depagterricker}.}) of $C(K)$, where $K$ is a compact Hausdorff space, there is always an underlying projection valued measure. In such cases, if $G/H$ is compact (and it is perhaps not overly optimistic to expect that the results in \cite{depagterricker} can be generalized to the locally compact case, so as to include representations of $C_0(G/H)$ for non-compact $G/H$), an imprimitivity theorem for Banach space representations of groups can be derived in Mackey's original form in terms of systems of imprimitivity. If all this comes to be, then this would be a satisfactory parallel---for suitable Banach spaces---with the Hilbert space context, both in the spirit of Rieffel's strong Morita equivalence of $C_0(G/H)\rt_{\textup{lt}}G$ and $C^*(H)$ as a means to obtain an imprimitivity theorem, and of Mackey's systems of imprimitivity as a means to formulate such a theorem. We hope to be able to report on this in due time.

\medskip
We will now outline the mathematical structure of the paper. Although the crossed product of a general Banach algebra is more involved than its $C^*$-algebra counterpart, the reader may still notice the evident influence of \cite{williams} on the present paper. We start by explaining how to construct the crossed product. Given a Banach algebra dynamical system $\dynsys$ (Definition~\ref{d:banach_algebra_dynamical_system}), and a non-empty class $\mr$ of continuous covariant representations (Definition~\ref{d:covariant_representation}), we want to introduce an algebra seminorm $\sigmar$ on the twisted convolution algebra $C_c(G,A)$ by defining
\[
\sigmar(f)=\sup_{\covrep\in\mr}\left\Vert\int_G \pi(f(s))U_s\ds\right\Vert\quad(f\in C_c(G,A)).
\]
For a $C^*$-dynamical system, if one lets $\mr$ consist of all pairs $\covrep$ where $\pi$ is involutive and non-degenerate, and $U$ is unitary and strongly continuous, this supremum is evidently finite, and $\sigmar$ is even a norm. For a general Banach algebra dynamical system, neither need be the case. This therefore leads us, first of all, to introduce the notion of a uniformly bounded (Definition~\ref{d:uniformly_bounded_class}) class of covariant representations, in order to ensure the finiteness of $\sigmar$. The resulting crossed product Banach algebra $\crosprod$ is then, by definition, the completion of $C_c(G,A)/\textup{ker\,}(\sigmar)$ in the algebra norm induced by $\sigmar$ on this quotient. Thus, as a second difference with the construction of the crossed product $C^*$-algebra associated with a $C^*$-dynamical system, a non-trivial quotient map is inherent in the construction. 

While the construction is thus easily enough explained, the representation theory, to which we now turn, is more involved. Suppose that $\covrep$ is a continuous covariant representation of $\dynsys$, and that there exists $C\geq 0$, such that $\norm{\int_G \pi(f(s))U_s\ds}\leq C\sigmar(f)$, for all $f\in C_c(G,A)$. In that case, we say that $\covrep$ is $\mr$-continuous, and it is clear that there is an associated bounded representation of $\crosprod$, denoted by $\intformr$. Certainly all elements of $\mr$ are $\mr$-continuous, yielding even contractive representations of $\crosprod$, but, as it turns out, there may be more. Likewise, $\crosprod$ may have non-contractive bounded representations. This contrasts the analogous involutive context for the crossed product $C^*$-algebra associated with a $C^*$-dynamical system. The natural question is, then, what the precise relation is between the $\mr$-continuous covariant representations of $\dynsys$ and the bounded representations of $\crosprod$. The answer turns out to be quite simple: if $A$ has a bounded left approximate identity, and if $\mr$ consists of non-degenerate (Definition~\ref{d:covariant_representation}) continuous covariant representations only, then the map $\covrep\mapsto\intformr$ is a bijection between the non-degenerate $\mr$-continuous covariant representations of $\dynsys$ and the non-degenerate bounded representations of $\crosprod$. This is the main content of Theorem~\ref{t:bijection}. 

Establishing this, however, is less simple. The first main step to be taken is to construct any representations of the group and the algebra at all from a given (non-degenerate) bounded representation of $\crosprod$. In case of a crossed product $C^*$-algebra and involutive representations in Hilbert spaces, there is a convenient way to proceed \cite{williams}. One starts by viewing this crossed product as an ideal of its double centralizer algebra. If the involutive representation $T$ of the crossed product $C^*$-algebra is non-degenerate, then it can be extended to an involutive representation of the double centralizer algebra. Subsequently, it can be composed with existing homomorphisms of group and algebra into this double centralizer algebra, thus yielding a pair $\covrep$ of representations. These can then be shown to have the desired continuity, involutive and covariance properties and, moreover, the corresponding non-degenerate involutive representation of the crossed product $C^*$-algebra turns out to be $T$ again. For Banach algebra dynamical systems we want to use a similar circle of ideas, but here the situation is more involved. To start with, it is not necessarily true that a Banach algebra $\mathcal A$ can be mapped injectively into its double centralizer algebra $\DCA$, or that a non-degenerate representation of $\mathcal A$ necessarily comes with an associated representation of the double centralizer algebra, compatible with the natural homomorphism from $\mathcal A$ into $\DCA$. This question motivated the research leading to \cite{extendart} as a preparation for the present paper, and, as it turns out, such results can be obtained. For example, if the algebra $\mathcal A$ has a bounded left approximate identity, and a non-degenerate bounded representation of $\mathcal A$ is given, then there is an associated bounded representation of the left centralizer algebra $\LCA$ which is compatible with the natural homomorphism from $\mathcal A$ into $\LCA$, with similar results for right and double centralizer algebras.\footnote{Theorem~\ref{t:summary_for_centralizers} contains a summary of what is needed in the present paper.} If we want to apply this in our situation, then we need to show that $\crosprod$ has a bounded left approximate identity. For crossed product $C^*$-algebras, this is of course automatic, but in the present case it is not. Thus it becomes necessary to establish this independently, and indeed $\crosprod$ has a bounded approximate left identity if $A$ has one, with similar right and two-sided results. As an extra complication, since the representations of $A$ under consideration are now not necessarily contractive anymore, and the group need not act isometrically, it becomes necessary, with the future applications in Section~\ref{sec:correspondences} in mind, to keep track of the available upper bounds for the various maps as they are constructed during the process. For this, in turn, one needs an explicit upper bound for bounded left and right approximate identities in $\crosprod$. It is for these reasons that Section~\ref{sec:approximate_identities} on approximate identities in $\crosprod$ and their bounds, which is superfluous for crossed product $C^*$-algebras, is a key technical interlude in the present paper. 

After that, once we know that $\crosprod$ has a left bounded approximate identity, we can let the left centralizer algebra $\mathcal M_l(\crosprod)$ take over the role that the double centralizer algebra has for crossed product $C^*$-algebras. Given a non-degenerate bounded representation $T$ of $\crosprod$, we can now find a compatible non-degenerate bounded representation of $\mathcal M_l(\crosprod)$, and on composing this with existing homomorphisms of the algebra and the group into $\mathcal M_l(\crosprod)$, we obtain a pair $\covrep$ of representations. The continuity and covariance properties are easily established, as is the non-degeneracy of $\pi$, but as compared to the situation for crossed product $C^*$-algebras, a complication arises again. Indeed, since in that case $\mr$ consists of all non-degenerate involutive covariant representations of $\dynsys$ in Hilbert spaces, and an involutive $T$ yields and involutive $\pi$ and unitary $U$, the pair $\covrep$ is automatically in $\mr$, and is therefore certainly $\mr$-continuous. For Banach algebra dynamical systems this need not be the case, and the norm estimates in our bookkeeping, although useful in Section~\ref{sec:correspondences}, provide no rescue: one needs an independent proof to show that $\covrep$ as obtained from $T$ is $\mr$-continuous. Once this has been done, it is not overly complicated anymore to show that the associated bounded representation $\intformr$ of $\crosprod$ (which can then be defined) is $T$ again. By keeping track of invariant closed subspaces and bounded intertwining operators during the process, and also considering the involutive context at little extra cost, the basic correspondence in Theorem~\ref{t:bijection} has then finally been established.

With this in place, and also the norm estimates from our bookkeeping available, it is easy so give applications in special situations. This is done in the final section, where we formulate, amongst others, the results for group Banach algebras $B_\bclass(G)$ already mentioned above. We then also see that the basic representation theoretical correspondence for ``the'' $C^*$-crossed product as commonly associated with a $C^*$-dynamical system is an instance of a more general correspondence (Theorem~\ref{t:involutive_correspondence}), valid for $C^*$-algebras of crossed product type associated with an involutive (Definition~\ref{d:banach_algebra_dynamical_system}) Banach algebra dynamical systems $\dynsys$, provided that, for all $\eps>0$, $A$ has a $(1+\eps)$-bounded approximate left identity.

\medskip

This paper is organized as follows.

In Section~\ref{sec:preliminaries} we establish the necessary basic terminology and collect some preparatory technical results for the sequel. Some of these can perhaps be considered to be folklore, but we have attempted to make the paper reasonably self-contained, especially since the basics for a general Banach algebra and Banach space situation are akin, but not identical, to those for $C^*$-algebras and Hilbert spaces, and less well-known. At the expense of a little extra verbosity, we have also attempted to be as precise as possible, throughout the paper, by including the usual conditions, such as (strong) continuity or (in the case of algebras) non-degeneracy of representations, only when they are needed and then always formulating them explicitly, thus eliminating the need to browse back and try to find which (if any) convention applies to the result at hand. There are no such conventions in the paper. It would have been convenient to assume from the very start that, e.g., all representations are (strongly) continuous and (in case of algebras) non-degenerate, but it seemed counterproductive to do so.

Section~\ref{sec:construction_and_basic_properties} contains the construction of the crossed product and its basic properties. The ingredients are a given Banach algebra dynamical system $\dynsys$ and a uniformly bounded class $\mr$ of continuous covariant representations thereof.

Section~\ref{sec:approximate_identities} contains the existence results and bounds for approximate identities in $\crosprod$. As explained above, this is a key issue which need not be addressed in the case of crossed product $C^*$-algebras.

Section~\ref{sec:from_dynsys_to_crosprod} is concerned with the easiest part of the representation theory as considered in this paper: the passage from $\mr$-continuous covariant representations of $\dynsys$ to bounded representations of $\crosprod$. We have included results about preservation of invariant closed subspaces, bounded intertwining operators and non-degeneracy. In this section, two homomorphisms $i_A$ and $i_G$ of, respectively, $A$ and $G$ into $\End (C_c(G,A))$ make their appearance, which will later yield homomorphisms $i_A^\mr$ and $i_G^\mr$ into the left centralizer algebra $\mathcal M_l(\crosprod)$, as needed to construct a covariant representation of $\dynsys$ from a non-degenerate bounded representation of $\crosprod$. With the involutive case in mind, anti-homomorphism $j_A$ and $j_G$ into $\End (C_c(G,A))$ are also considered.

Section~\ref{sec:centralizer_algebras} on centralizer algebras starts with a review of part of the results from \cite{extendart}, and then, after establishing a separation property to be used later (Proposition~\ref{p:representations_separate_left_centralizers}), continues with the study of more or less canonical (anti-)homomorphisms of $A$ and $G$ into the left, right or double centralizer algebra of $\crosprod$. These (anti-)homomorphisms, such as $i_A^\mr$ and $i_G^\mr$ already alluded to above are based on the (anti-)homomorphisms from Section~\ref{sec:from_dynsys_to_crosprod}.

Section~\ref{sec:from_crosprod_to_dynsys} contains the most involved part of the representation theory: the passage from non-degenerate bounded representations of $\crosprod$ to non-degenerate $\mr$-continuous covariant representations of $\dynsys$. At this point, if $A$ has a bounded left approximate identity, then Sections~\ref{sec:approximate_identities} and~\ref{sec:centralizer_algebras} provide the necessary ingredients. If $T$ is a non-degenerate bounded representation of $\crosprod$, then there is a compatible non-degenerate bounded representation $\overline{T}$ of $\mathcal{M}_l(\crosprod)$, and one thus obtains a representation $\overline{T} \circ i_A^\mr$ of $A$ and a representation $\overline{T} \circ i_G^\mr$ of $G$. The main hurdle, namely to construct any representations of $A$ and $G$ at all from $T$, has thus been taken, but still some work needs to be done to take care of the remaining details.

Section~\ref{sec:general_correspondence} contains, finally, the bijection between the non-degenerate $\mr$-continuous covariant representations of $\dynsys$ and the non-degenerate bounded representations of $\crosprod$, valid if $A$ has a bounded left approximate identity and $\mr$ consists of non-degenerate continuous covariant representations only. Obtaining this Theorem~\ref{t:bijection} is simply a matter of putting the pieces together. Results about preservation of invariant closed subspaces and bounded intertwining operators are also included, as is a specialization to the involutive case. For convenience, we have also included in this section some relevant explicit expressions and norm estimates as they follow from the previous material.

In Section~\ref{sec:correspondences} the basic correspondence from Theorem~\ref{t:bijection} is applied to various situations, including the cases of a trivial algebra and of a trivial group. Whereas an application of this theorem to the case of a trivial algebra does lead to non-trivial results about group Banach algebras, as discussed earlier in this Introduction, it does not give optimal results for a trivial group. In that case, the machinery of the present paper is, in fact, largely superfluous, but for the sake of completeness we have nevertheless included a brief discussion of that case and a formulation of the (elementary) optimal results.

\medskip
\emph{Reading guide.} In the discussion above it may have become evident that, whereas the construction of a Banach algebra crossed product requires modifications of the crossed product $C^*$-algebra construction which are fairly natural and easily implemented, establishing the desired correspondence at the level of (covariant) representations is more involved than for crossed product $C^*$-algebras. As evidence of this may serve the fact that Theorem~\ref{t:bijection} can, without too much exaggeration, be regarded as the summary of most material preceding it, including some results from \cite{extendart}. To facilitate the reader who is mainly interested in this correspondence as such, and in its applications in Section~\ref{sec:correspondences}, we have included (references to) the relevant definitions in Sections~\ref{sec:general_correspondence} and~\ref{sec:correspondences}. We hope that, with some browsing back, these two sections, together with this Introduction, thus suffice to convey how $\crosprod$ is constructed and what its main properties and special cases are.

\medskip
\emph{Perspectives.} According to its preface, \cite{williams} can only cover part of what is currently known about crossed products of $C^*$-algebras in one volume. Although the theory of crossed products of Banach algebras is, naturally, not nearly as well developed as for $C^*$-algebras, it is still true that more can be said than we felt could reasonably be included in one research paper. Therefore, in \cite{crossedtwo} we will continue the study of these algebras. We plan to include (at least) a characterization of $\crosprod$ by a universal property in the spirit of \cite[Theorem~2.61]{williams}, as well as a detailed discussion of $L^1$-algebras. As mentioned above, $L^1(G)$ is isometrically isomorphic to a crossed product as constructed in the present paper, and the well-known link between its representation theory and that for $G$ follows from our present results. We will include this, as a special case of similar results for $L^1(G,A)$ with twisted convolution. Also, we will then consider natural variations on the bijection theme: suppose that one has, say, a uniformly bounded class $\mr$ of pairs $\covrep$, where $\pi$ is a non-degenerate continuous anti-representation of $A$, $U$ is a strongly continuous anti-representation of $G$, and the pair $\covrep$ is anti-covariant, is it then possible to find an algebra of crossed product type, the non-degenerate bounded anti-representations of which correspond bijectively to the $\mr$-continuous pairs $\covrep$ with the properties as just mentioned? It is not too difficult to relate these questions to the results in the present paper, albeit sometimes for a closely related alternative Banach algebra dynamical system, and it seems quite natural to consider this matter, since examples of such $\mr$ are easy to provide. Once this has been done, we will also be able to infer the basic relation \cite[Proposition~2.1]{johnson} between $L^1(G)$-\emph{bi}modules and $G$-\emph{bi}modules from the results in the present paper. As mentioned above, we also plan to consider Morita equivalence and imprimitivity theorems, but that may have to wait until another time. The same holds for crossed products of Banach algebras in the context of positive representations on Banach lattices.\footnote{As a preparation, positivity issues have already been taken into account in \cite{extendart}.}

\section{Preliminaries}\label{sec:preliminaries}

In this section we introduce the basic definitions and notations, and establish some preliminary results. We start with a few general notions.

If $G$ is a group, then $e$ will be its identity element. If $G$ is a locally compact group, then we fix a left Haar measure $\mu$ on $G$, and denote integration of a function $\psi$ with respect to this Haar measure by $\int_G \psi(s) \ds$. We let $\Delta: G \to (0,\infty)$ denote the modular function, so for $f \in C_c(G)$ and $r \in G$ we have (\cite[Lemma~1.61,~Lemma~1.67]{williams})
\[ \Delta(r) \int_G f(sr) \ds = \int_G f(s) \ds, \quad \int_G \Delta(s^{-1}) f(s^{-1}) \ds = \int_G f(s) \ds. \]

If $X$ is a normed space, we denote by $B(X)$ the normed algebra of bounded operators on $X$. We let $\Inv(X)$ denote the group of invertible elements of $B(X)$. If $A$ is a normed algebra, we write $\Aut(A)$ for its group of bounded automorphisms.

A neighbourhood of a point in a topological space is a set with that point as interior point. It is not necessarily open.

Throughout this paper, the scalar field can be either the real or the complex numbers.

\subsection{Group representations}\label{subsec:group_representations}

\begin{defn}
A \emph{representation} $U$ of a group $G$ on a normed space $X$ is a group homomorphism $U: G \to \Inv(X)$.
\end{defn}

Note that there is no continuity assumption, which is actually quite convenient during proofs. For typographical reasons, we will write $U_s$ rather than $U(s)$, for $s\in G$.

\begin{lemma}\label{l:str_cont_compactly_bounded}
 Let $X$ be a non-zero Banach space and $U$ be a strongly continuous representation of a topological group $G$ on $X$. Then for every compact set $K \subset G$ there exist a constant $M_K > 0$ such that, for all $r \in K$,
\[ \frac{1}{M_K} \leq \norm{U_r} \leq M_K.\]
\end{lemma}

\begin{proof}
 For fixed $x \in X$, the map $r \mapsto U_r x$ is continuous, so the set $\{U_r x: r \in K\}$ is compact and hence bounded. By the Banach-Steinhaus Theorem, there exists $M_K^\prime>0$ such that $\norm{U_r} \leq M_K^\prime$ for all $r\in K$. Since, for $r\in K$, $1=\norm{\id_X}\leq \norm{U_{r^{-1}}}\norm{U_r}\leq M_{K^{-1}}^\prime \norm{U_r}$, $M_K=\max (M_K^\prime, M_{K^{-1}}^\prime)$ is as required.
\end{proof}

If $U$ is a strongly continuous representation of a topological group $G$ on a Banach space $X$, then the natural map from $G\times X$ to $X$ is separately continuous. Actually, it is automatically jointly continuous, according to the next result.

\begin{prop}\label{p:jointly_continuous}
Let $U$ be a strongly continuous representation of the locally compact group $G$ on the Banach space $X$. Then the map $(r,x)\to U_r x$ from $G\times X$ to $X$ is continuous.
\end{prop}

\begin{proof}
We may assume that $X$ is non-zero.
Fix $(r_0,x_0)\in G\times X$ and let $\eps>0$. There exists a neighbourhood $V$ of $r_0$ such that, for all $r\in V$, $\|U_r x_0-U_{r_0} x_0\|< \eps/2$. We may assume that $V$ is compact, and then Lemma~\ref{l:str_cont_compactly_bounded} yields an $M_V>0$ such that $\norm{U_r}\leq M_V$ for all $r\in V$. Therefore, if $r\in V$ and $\norm{x-x_0}< \eps/(2M_V)$, we have
\begin{align*}
\norm{U_r x-U_{r_0} x_0}&\leq \norm{U_r x - U_r x_0}+\norm{U_r x_0-U_{r_0} x_0}\\
&<M_V\cdot\frac{\eps}{2M_V} + \frac{\eps}{2}\\
&=\eps.
\end{align*}
\end{proof}

Corollary~\ref{c:strongly_continuous_at_e} below, and notably its second statement, will be used repeatedly when showing that a representation of a locally compact group is strongly continuous. The following lemma is a preparation.

\begin{lemma}\label{l:continuity_between_two_groups}
 Let $G$ and $H$ be two groups with a topology such that right multiplication is continuous in both groups, or such that left multiplication is continuous in both groups. Let $U: G \to H$ be a homomorphism. Then $U$ is continuous if and only if it is continuous at $e$.
\end{lemma}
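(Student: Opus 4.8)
The plan is to prove the nontrivial implication, that continuity at $e$ forces continuity everywhere; the converse is immediate, since continuity at every point in particular includes continuity at $e$. I would treat the case where right multiplication is continuous in both $G$ and $H$, the case of left multiplication being entirely analogous with the roles of the two factors reversed.

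First I would record the one structural fact that makes the argument work: in a group all of whose right multiplications are continuous, right multiplication $R_r\colon g\mapsto gr$ is in fact a \emph{homeomorphism}, because its inverse is $R_{r^{-1}}$, which is continuous by the same hypothesis. Thus right translation carries a neighbourhood of any point bijectively and bicontinuously onto a neighbourhood of its translate, and this holds in both $G$ and $H$.

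Next, to establish continuity of $U$ at an arbitrary point $g_0\in G$, I would exploit the homomorphism property through the factorization $U(g)=U(gg_0^{-1})\,U(g_0)$; equivalently, I would write each $g$ near $g_0$ as $g=hg_0$ with $h$ near $e$. Concretely, given a neighbourhood $W$ of $U(g_0)$ in $H$, I would translate it on the right by $U(g_0)^{-1}$ to obtain a neighbourhood $WU(g_0)^{-1}$ of $e_H$; continuity of $U$ at $e$ then supplies a neighbourhood $V_0$ of $e$ in $G$ with $U(V_0)\subseteq WU(g_0)^{-1}$. Translating $V_0$ on the right by $g_0$ yields a neighbourhood $V=V_0g_0$ of $g_0$, and for any $g=hg_0\in V$ the homomorphism property gives $U(g)=U(h)U(g_0)\in\bigl(WU(g_0)^{-1}\bigr)U(g_0)=W$, which is exactly what is needed.

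There is no serious obstacle here; the only point requiring care is that the hypotheses assert merely that multiplication is continuous, not that the group operation is jointly continuous or that inversion is continuous. The proof must therefore avoid treating inversion as a continuous map on $G$ or $H$, relying solely on the fact noted above that each individual translation is a homeomorphism. Keeping the left and right versions cleanly separated, and using in the left-multiplication case the matching factorization $U(g)=U(g_0)\,U(g_0^{-1}g)$ together with left translations, is the only bookkeeping involved.
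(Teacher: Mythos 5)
Your proof is correct and follows essentially the same route as the paper: both rest on the factorization $U(g)=U\bigl(gg_0^{-1}\bigr)U(g_0)$ together with continuity of the individual right translations in $G$ and $H$ (and the analogous left-handed factorization for the other case). The only difference is cosmetic—the paper runs the argument with nets ($r_i\to r$ gives $r_ir^{-1}\to e$, hence $U_{r_i}=U_{r_ir^{-1}}U_r\to U_r$) while you run it with neighbourhoods and translated neighbourhood bases—and your explicit care to use only that each single translation is a homeomorphism, never continuity of inversion or joint continuity, matches the paper's usage exactly.
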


\begin{proof}
Assume that right multiplication is continuous in both groups. Let $U$ be a homomorphism which is continuous at $e$ and let $(r_i) \in G$ be a net converging to $r\in G$. Then $r_i r^{-1} \to e$ by the continuity of right multiplication by $r^{-1}$ in $G$, and so
\[ U_{r_i} = U_{r_i r^{-1}} U_r \to U_r, \]
where the continuity of right multiplication by $U_r$ in $H$ is used in the last step. The case of continuous left multiplication is proved similarly, writing $U_{r_i}=U_rU_{r^{-1}r_i}$.
\end{proof}

\begin{corol}\label{c:strongly_continuous_at_e}
Let $G$ be a group with a topology such that right or left multiplication is continuous. Let $X$ be a Banach space and suppose $U:G\to\Inv(X)$ is a representation of $G$ on $X$. Then $U$ is a strongly continuous representation if and only $U$ is strongly continuous at $e$. If $U$ is uniformly bounded on some neighbourhood of $e$, and $Y\subset X$ is a dense subset of $X$, then $U$ is a strongly continuous representation if and only if $r\mapsto U_r y$ is continuous at $e$ for all $y\in Y$.
\end{corol}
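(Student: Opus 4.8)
The plan is to prove both equivalences by reducing to Lemma~\ref{l:continuity_between_two_groups}, applied with the target group $H = \Inv(X)$ equipped with the strong operator topology. First I would observe that a representation $U$ is strongly continuous precisely when $U \colon G \to \Inv(X)$ is continuous with respect to the strong operator topology on $\Inv(X)$, and likewise that strong continuity at $e$ is the same as continuity of this map at $e$. This is an elementary neighbourhood-base matter: a basic strong-operator neighbourhood of $U_r$ is a finite intersection of sets of the form $\{S : \norm{(S - U_r)x} < \eps\}$, so continuity of $r \mapsto U_r x$ for each \emph{single} $x \in X$ is equivalent to continuity into the strong operator topology.

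Next I would check that the strong operator topology turns $\Inv(X)$ into a group in which \emph{both} left and right multiplication are continuous. If $S_i \to S$ strongly and $T \in \Inv(X)$ is fixed, then $S_i T x = S_i(Tx) \to S(Tx) = STx$ for every $x$, giving continuity of right multiplication, while $T S_i x = T(S_i x) \to T(Sx) = TSx$, using boundedness of $T$, gives continuity of left multiplication. Since the topology on $G$ is assumed to make right or left multiplication continuous, whichever case occurs is matched by $\Inv(X)$, so Lemma~\ref{l:continuity_between_two_groups} applies to the homomorphism $U$ and yields that $U$ is strongly continuous if and only if it is strongly continuous at $e$. This settles the first statement.

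For the second statement, the forward implication is immediate, since strong continuity of $U$ entails continuity of $r \mapsto U_r y$ at $e$ for every $y \in Y$. For the converse I would invoke the first statement to reduce to proving strong continuity at $e$, and then run a density-plus-uniform-boundedness estimate. Choosing a neighbourhood $W$ of $e$ and a constant $M$ with $\norm{U_r} \leq M$ for $r \in W$, I fix $x \in X$ and $\eps > 0$, pick $y \in Y$ with $\norm{x - y} < \eps/(2(M+1))$, and then select a neighbourhood $V \subseteq W$ of $e$ with $\norm{U_r y - y} < \eps/2$ for all $r \in V$. The triangle inequality $\norm{U_r x - x} \leq \norm{U_r}\,\norm{x - y} + \norm{U_r y - y} + \norm{y - x}$ then gives $\norm{U_r x - x} < \eps$ for $r \in V$, which is strong continuity at $e$.

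The only genuinely delicate point, and the step I would treat most carefully, is the verification that the strong operator topology makes left and right multiplication continuous on $\Inv(X)$, together with the identification of strong continuity with continuity into that topology; once this is in place, Lemma~\ref{l:continuity_between_two_groups} does all the work in the first statement and the second reduces to a routine approximation argument. I would note in particular that inversion need not be continuous in the strong operator topology, but this is irrelevant here, since Lemma~\ref{l:continuity_between_two_groups} requires only continuity of multiplication.
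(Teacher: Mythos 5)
Your proposal is correct and takes essentially the same route as the paper: the paper's proof consists precisely of invoking Lemma~\ref{l:continuity_between_two_groups} together with the separate continuity of multiplication in $B(X)$ in the strong operator topology for the first statement, and deriving the second as an easy consequence of the first via the uniform-boundedness-plus-density argument you spell out. Your write-up simply makes explicit the details (the verification of separate continuity on $\Inv(X)$ and the $3\eps$-estimate) that the paper leaves to the reader.
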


\begin{proof}
 The first part follows from Lemma~\ref{l:continuity_between_two_groups} and the fact that multiplication in $B(X)$ equipped with the strong operator topology is separately continuous. The second part is an easy consequence of the first.
\end{proof}

If $X$ is a Hilbert space, then the \emph{$*$-strong operator topology} is the topology on $B(X)$ generated by the seminorms $T \mapsto \norm{Tx} + \norm{T^* x}$, with $x \in X$. A net $(T_i)$ converges $*$-strongly to $T$ if and only if both $T_i \to T$ strongly and $T_i^* \to T^*$ strongly. This topology is stronger than the strong operator topology and weaker than the norm topology, and multiplication is continuous in this topology on uniformly bounded subsets of $B(X)$.

\begin{remark}\label{r:unitary_is_*-strong-cont}
If $U$ is a unitary representation, then the decomposition of $r \mapsto U_r^*$ as $r \mapsto r^{-1} \mapsto U_r^{-1} = U_r^*$ shows that $U$ is strongly continuous if and only if $U$ is $*$-strongly continuous.
\end{remark}

\subsection{Algebra representations}

\begin{defn}
A \emph{representation} $\pi$ of an algebra $A$ on a normed space $X$ is an algebra homomorphism $\pi: A \to B(X)$. The representation $\pi$ is non-degenerate if $\pi(A)\cdot X:=\textup{span\,}\{\pi(a)x : a\in A,x\in X\}$ is dense in $X$.
\end{defn}

Note that it is not required that $\pi$ is unital if $A$ has a unit element, nor that $\pi$ is (norm) bounded if $A$ is a normed algebra.

\begin{remark}\label{r:strong_continuity_approx_identity}
 If $A$ is a normed algebra with a bounded left approximate identity $(u_i)$, and $\pi$ is a bounded representation of $A$ on the Banach space $X$, then it is easy to verify that $\pi$ is non-degenerate if and only if $\pi(u_i) \to \id_X$ in the strong operator topology.
\end{remark}

The following result, which will be used in the context of covariant representations, follows readily using Remark~\ref{r:strong_continuity_approx_identity}.

\begin{lemma}\label{l:alg_restriction_also_non_degenerate}
Let $A$ be a normed algebra with a bounded approximate left identity, and let $\pi$ be a bounded representation of $A$ on a Banach space $X$.
\begin{enumerate}
\item If $\pi$ is non-degenerate and $Z\subset X$ is an invariant subspace, then the restricted representation of $A$ to $Z$ is non-degenerate.
\item There is a largest invariant subspace such that the restricted representation of $A$ to it is non-degenerate. This subspace is closed. In fact, it is $\overline{\pi(A)\cdot X}$.
\end{enumerate}
\end{lemma}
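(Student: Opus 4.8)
The plan is to exploit the bounded left approximate identity $(u_i)$ of $A$ together with Remark~\ref{r:strong_continuity_approx_identity}, which characterizes non-degeneracy of a bounded representation on a Banach space by the strong convergence $\pi(u_i)\to\id$. Throughout I write $M:=\norm{\pi}\sup_i\norm{u_i}$, a finite uniform bound for $\norm{\pi(u_i)}$.

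For part $(i)$, since $\pi$ is non-degenerate, Remark~\ref{r:strong_continuity_approx_identity} gives $\pi(u_i)\to\id_X$ strongly. For any $z\in Z$ we then have $\pi(u_i)z\to z$, and invariance of $Z$ forces $\pi(u_i)z\in\pi(A)\cdot Z$ for every $i$; hence $z\in\overline{\pi(A)\cdot Z}$. As $z\in Z$ was arbitrary, $\pi(A)\cdot Z$ is dense in $Z$, i.e.\ the restricted representation is non-degenerate. No completeness of $Z$ is used, so this argument works for a not-necessarily-closed invariant subspace.

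For part $(ii)$, set $W:=\overline{\pi(A)\cdot X}$. First I would check that $W$ is a closed invariant subspace: $\pi(A)\cdot X$ is invariant because $\pi(b)\pi(a)x=\pi(ba)x$, and applying the bounded operator $\pi(a)$ and taking closures preserves this, so $\pi(a)W\subseteq W$. The crucial and least automatic point is that the restriction $\pi|_W$ is non-degenerate even though $\pi$ itself is not assumed non-degenerate on $X$, so I cannot invoke $\pi(u_i)\to\id_X$. Instead I would establish $\pi(u_i)\to\id_W$ strongly directly. On a generator $w=\pi(a)x$ this reads $\pi(u_i)w=\pi(u_ia)x$, and since $u_ia\to a$ in $A$ and $\pi$ is bounded we get $\pi(u_ia)\to\pi(a)$ in operator norm, whence $\pi(u_i)w\to w$. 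By linearity this holds for all $w\in\pi(A)\cdot X$, and a standard $3\eps$-argument using the uniform bound $M$ extends it to all $w\in\overline{\pi(A)\cdot X}=W$. Remark~\ref{r:strong_continuity_approx_identity}, now applied to the bounded representation $\pi|_W$ on the Banach space $W$, yields non-degeneracy.

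Finally, for maximality, let $Z$ be any invariant subspace with $\pi|_Z$ non-degenerate. Then $\pi(A)\cdot Z$ is dense in $Z$, while $\pi(A)\cdot Z\subseteq\pi(A)\cdot X\subseteq W$; since $W$ is closed this gives $Z\subseteq\overline{\pi(A)\cdot Z}\subseteq W$. Thus $W$ contains every such subspace and is itself one, so it is the largest. I expect the only genuine obstacle to be the non-degeneracy of $\pi|_W$ in part $(ii)$; everything else is bookkeeping with the approximate identity. The key observation that unlocks it is the norm convergence $\pi(u_ia)\to\pi(a)$, which replaces the unavailable global strong convergence $\pi(u_i)\to\id_X$.
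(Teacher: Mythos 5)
Your proof is correct and follows exactly the route the paper has in mind: the paper gives no detailed argument for this lemma, saying only that it "follows readily using Remark~\ref{r:strong_continuity_approx_identity}", and your argument is precisely that — using $\pi(u_i)\to\id$ strongly for part $(i)$, and the norm convergence $\pi(u_ia)\to\pi(a)$ plus a $3\eps$-argument to get $\pi(u_i)\to\id_W$ strongly on $W=\overline{\pi(A)\cdot X}$ for part $(ii)$. The maximality argument via $\pi(A)\cdot Z\subseteq W$ is also the standard one, so there is nothing to add.
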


\subsection{Banach algebra dynamical systems and covariant representations}\label{subsec:banach_algebra_dynamical_systems}

We continue by defining the notion of a dynamical system in our setting.

\begin{defn}\label{d:banach_algebra_dynamical_system}
A \emph{normed (resp.\ Banach) algebra dynamical system} is a triple $\dynsys$, where $A$ is a normed (resp.\ Banach) algebra\footnote{If $A$ is an algebra, then we do not assume that it is unital, nor that, if it is a unital normed algebra, the identity element has norm 1.}, $G$ is a locally compact Hausdorff group, and $\alpha: G \to \Aut(A)$ is a strongly continuous representation of $G$ on $A$. The system is called involutive when the scalar field is $\C$, $A$ has a bounded involution and $\alpha_s$ is involutive for all $s\in G$.
\end{defn}

From Proposition~\ref{p:jointly_continuous} we see that, for a Banach algebra dynamical system $\dynsys$, the canonical map $(s,a)\to \alpha_s(a)$ is continuous from $G\times A$ to $A$. This fact has as important consequence that a number of integrands in the sequel are \emph{continuous} vector valued functions on $G$, and we mention one of these explicitly for future reference.

\begin{lemma}\label{l:map_continuous}
 Let $\dynsys$ be a Banach algebra dynamical system, let $f \in C(G,A)$ and $s\in G$. Then the map $r\mapsto\alpha_{r}\left(f(r^{-1}s)\right)$ from $G$ to $A$ is continuous.
\end{lemma}

Indeed, this maps is the composition of the maps $r\mapsto (r,f(r^{-1}s))$ from $G$ to $G\times A$ and the canonical map from $G\times A$ to $A$.

Next we define our main objects of interest, the covariant representations.

\begin{defn}\label{d:covariant_representation}
Let $\dynsys$ be a normed algebra dynamical system, and let $X$ be a normed space. Then a \emph{covariant representation} of $\dynsys$ on $X$ is a pair $(\pi,U)$, where $\pi$ is a representation of $A$ on $X$ and $U$ is a representation of $G$ on $X$, such that for all $a\in A$ and $s\in G$,
\[
\pi(\alpha_s(a)) = U_s \pi(a) U_s^{-1} .
\]
The covariant representation $(\pi,U)$ is called continuous if $\pi$ is norm bounded and $U$ is strongly continuous, and it is called non-degenerate if $\pi$ is a non-degenerate representation of $A$.

If $\dynsys$ is a normed algebra dynamical system, then the covariant representation $\covrep$ of $\dynsys$ on $X$ is called involutive if the representation space $X$ is a Hilbert space, $\pi$ is an involutive representation of $A$ and $U$ is a unitary representation of $G$.
\end{defn}

We can use Lemma~\ref{l:alg_restriction_also_non_degenerate} to obtain a similar general result for normed dynamical systems which, for $G=\{e\}$, specializes to Lemma~\ref{l:alg_restriction_also_non_degenerate} again.

\begin{lemma}\label{l:dyn_sys_restriction_also_non_degenerate}
 Let $\dynsys$ be a normed algebra dynamical system, where $A$ has a bounded approximate left identity. Let $\covrep$ be a covariant representation of $\dynsys$ on the Banach space $X$, and assume that $\pi$ is bounded.
\begin{enumerate}
 \item If $(\pi,U)$ is non-degenerate and $Z$ is a subspace which is invariant under both $\pi(A)$ and $U(G)$, then the restricted covariant representation of $\dynsys$ to $Z$ is non-degenerate.
\item There is a largest subspace which is invariant under both $\pi(A)$ and $U(G)$ such that the restricted covariant representation of $\dynsys$ to it is non-degenerate. This subspace is closed. In fact, it is $\overline{\pi(A)\cdot X}$.
\end{enumerate}
\end{lemma}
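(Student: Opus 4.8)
The plan is to reduce both statements to their purely algebraic counterparts in Lemma~\ref{l:alg_restriction_also_non_degenerate}, using the covariance relation only to handle the extra invariance under $U(G)$. The key observation is that, by Definition~\ref{d:covariant_representation}, non-degeneracy of a covariant representation is \emph{by definition} non-degeneracy of its algebra part $\pi$, so the group representation $U$ enters only through the requirement that the subspaces in question be $U(G)$-invariant.

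For part~(i), I would argue as follows. A subspace $Z$ invariant under $U(G)$ satisfies $U_s Z\subseteq Z$ for all $s\in G$; applying this to $s^{-1}$ gives $U_s Z = Z$, so each $U_s$ restricts to an invertible operator on $Z$ (with bounded inverse $U_{s^{-1}}|_Z$) and the restriction of $\covrep$ to $Z$ is indeed a covariant representation. Since $Z$ is in particular invariant under $\pi(A)$, and $\pi$ is non-degenerate and bounded, Lemma~\ref{l:alg_restriction_also_non_degenerate}(i) shows that the restricted algebra representation $\pi|_Z$ is non-degenerate, which is exactly what is meant by the restricted covariant representation being non-degenerate.

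For part~(ii), the natural candidate is $Y:=\overline{\pi(A)\cdot X}$. By Lemma~\ref{l:alg_restriction_also_non_degenerate}(ii) this is a closed subspace, it is invariant under $\pi(A)$, the restriction $\pi|_Y$ is non-degenerate, and it is the \emph{largest} $\pi(A)$-invariant subspace with that property. It remains to bring $U(G)$ into play. To see that $Y$ is $U(G)$-invariant---so that the restriction to $Y$ really is a covariant representation---I would use the covariance relation in the form $U_s\pi(a)=\pi(\alpha_s(a))U_s$ (for $a\in A$, $s\in G$), obtained by multiplying $\pi(\alpha_s(a))=U_s\pi(a)U_s^{-1}$ on the right by $U_s$. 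This shows $U_s(\pi(a)x)=\pi(\alpha_s(a))U_s x\in\pi(A)\cdot X$, so $U_s$ maps the linear span $\pi(A)\cdot X$ into itself; since $U_s$ is bounded it maps the closure $Y$ into itself. Thus $Y$ is invariant under both $\pi(A)$ and $U(G)$, and by part~(i) the restricted covariant representation to $Y$ is non-degenerate.

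Finally, for maximality: if $Z$ is any subspace invariant under both $\pi(A)$ and $U(G)$ whose restricted covariant representation is non-degenerate, then $Z$ is in particular $\pi(A)$-invariant with $\pi|_Z$ non-degenerate, so the maximality clause of Lemma~\ref{l:alg_restriction_also_non_degenerate}(ii) forces $Z\subseteq Y$. This establishes that $Y=\overline{\pi(A)\cdot X}$ is the desired largest subspace, and it is closed. The only step where covariance is genuinely needed---and hence the one place requiring any real argument beyond citing Lemma~\ref{l:alg_restriction_also_non_degenerate}---is the $U(G)$-invariance of $\overline{\pi(A)\cdot X}$; everything else is a direct transcription of the algebraic statement into the covariant setting.
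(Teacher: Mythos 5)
Your proposal is correct and follows essentially the same route as the paper: both reduce everything to Lemma~\ref{l:alg_restriction_also_non_degenerate} and use the covariance relation $U_r\pi(a)x=\pi(\alpha_r(a))U_rx$ together with continuity of $U_r$ to show that $\overline{\pi(A)\cdot X}$ is $U(G)$-invariant. Your write-up is only slightly more explicit than the paper's (spelling out that the restriction to an invariant subspace is again a covariant representation, and the maximality argument), but there is no substantive difference.
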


\begin{proof}
The first part follows directly from the first part of Lemma~\ref{l:alg_restriction_also_non_degenerate}.
As for the second part, the second part of Lemma~\ref{l:alg_restriction_also_non_degenerate} shows that any subspace which is invariant under both $\pi(A)$ and $U(G)$ is contained in $\overline{\pi(A)\cdot X}$. It also yields that $\pi$ restricted to this space is a non-degenerate representation of $A$, hence we need only show that it is invariant under $U(G)$. As to this, if $y = \pi(a)x$, where $a \in A$ and $x \in X$, then, for $r \in G$, using the covariance,
\[ U_r y = U_r \pi(a)x = \pi(\alpha_r(a))U_r x \in \pi(A)\cdot X. \]
By continuity, this implies that $\overline{\pi(A)\cdot X}$ is invariant under $U_r$, for all $r \in G$.
\end{proof}

We conclude this subsection with some terminology about intertwining operators. Let $A$ be an algebra, and let $G$ be a group. Suppose that $X$ and $Y$ are two Banach spaces, and that $\pi:A\to B(X)$ and $\rho:A\to B(Y)$ are two representations of $A$. Then a bounded operator $\Phi:X\to Y$ is said to be a bounded intertwining operator between $\pi$ and $\rho$ if $\rho(a) \circ \Phi = \Phi \circ \pi(a)$, for all $a\in A$. A bounded intertwining operator between two group representations is defined similarly. If $\dynsys$ is a normed dynamical system, and $\covrep$ and $(\rho, V)$ are two covariant representations on Banach spaces $X$ and $Y$, respectively, then a bounded operator $\Phi: X \to Y$ is called an intertwining operator for these covariant representations, if $\Phi$ is an intertwining operator for $\pi$ and $\rho$, as well as for $U$ and $V$.

\subsection{$C_c(G,X)$}\label{subsec:C_c(G,X)}

We will frequently work with functions in $C_c(G,X)$, where $X$ is a Banach space. The next lemma, for the proof of which we refer to \cite[Lemma~1.88]{williams}, shows that these functions are uniformly continuous.

\begin{lemma}\label{l:uniformly_continuous}
 Let $G$ be a locally compact Hausdorff group and let $X$ be a Banach space. If $f \in C_c(G,X)$ and $\eps > 0$, then there exists a neighbourhood $V$ of $e \in G$ such that either one of $sr^{-1} \in V$ or $s^{-1}r \in V$ implies
\[ \norm{f(s) - f(r)} < \eps. \]
\end{lemma}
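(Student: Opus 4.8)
The plan is to establish the statement for the condition $sr^{-1}\in V$ (right uniform continuity) and to deduce the variant with $s^{-1}r\in V$ at the end by applying the result to the function $t\mapsto f(t^{-1})$, which again lies in $C_c(G,X)$. Throughout I would write $K=\supp f$, a compact set, and note that completeness of $X$ plays no role: only the norm on $X$ is used, together with the group topology.

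For the right uniform continuity, the idea is to localize the continuity of $f$ at each point of $K$, translate it into a statement about a neighbourhood of $e$, and then use compactness of $K$ together with a halving trick to make the estimate uniform. Concretely, I would first fix $\eps>0$ and, for each $x\in K$, use continuity of $f$ at $x$ to produce a neighbourhood of $x$ on which $f$ varies by less than $\eps/2$; rewriting this via right translation by $x^{-1}$ gives a neighbourhood $N_x$ of $e$ with $\norm{f(vx)-f(x)}<\eps/2$ whenever $v\in N_x$. Since multiplication $G\times G\to G$ is continuous and $ee=e$, I can then choose a symmetric open neighbourhood $V_x$ of $e$ with $V_xV_x\subseteq N_x$, so that in fact $v\in V_xV_x$ already forces $\norm{f(vx)-f(x)}<\eps/2$.

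The sets $\{V_x x : x\in K\}$ form an open cover of $K$ (each $x$ lies in $V_x x$ because $e\in V_x$), so by compactness finitely many suffice: $K\subseteq\bigcup_{i=1}^n V_{x_i}x_i$. Setting $V=\bigcap_{i=1}^n V_{x_i}$, a symmetric open neighbourhood of $e$, I would verify the claim for pairs with $sr^{-1}\in V$ by distinguishing whether the points meet $K$. If neither $s$ nor $r$ lies in $K$, then $f(s)=f(r)=0$ and there is nothing to prove. Otherwise, using that $V$ is symmetric (so $sr^{-1}\in V$ is equivalent to $rs^{-1}\in V$) I may assume $r\in K$; choosing $i$ with $r\in V_{x_i}x_i$, both $rx_i^{-1}\in V_{x_i}$ and $(sr^{-1})(rx_i^{-1})=sx_i^{-1}\in V_{x_i}V_{x_i}$ hold, whence $\norm{f(r)-f(x_i)}<\eps/2$ and $\norm{f(s)-f(x_i)}<\eps/2$, and the triangle inequality yields $\norm{f(s)-f(r)}<\eps$.

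The main obstacle, and the only genuinely delicate point, is the interplay between the halving trick and compactness: one must commit to the $V_xV_x$-refinement \emph{before} extracting the finite subcover, so that the single neighbourhood $V$ obtained by intersection is simultaneously small enough to control both the passage from $r$ to its chosen centre $x_i$ and the further passage from $r$ to $s$. Once this bookkeeping is arranged, the two estimates combine cleanly, and the deduction of the $s^{-1}r\in V$ version is immediate: applying the right-uniform-continuity statement to $\tilde f(t)=f(t^{-1})\in C_c(G,X)$ and substituting $s\mapsto s^{-1}$, $r\mapsto r^{-1}$ turns the hypothesis $s^{-1}r\in V$ into the required conclusion $\norm{f(s)-f(r)}<\eps$.
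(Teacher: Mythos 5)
Your proof is correct, and it is essentially the argument the paper points to: the paper gives no proof of its own but refers to \cite[Lemma~1.88]{williams}, where the same standard scheme is used — localize continuity at points of the support, refine to symmetric neighbourhoods with $V_xV_x\subseteq N_x$ \emph{before} extracting the finite subcover, handle points outside the support separately, and get the second condition by symmetry/inversion. The only cosmetic remark is that the statement asks for a single neighbourhood $V$ working for both conditions $sr^{-1}\in V$ and $s^{-1}r\in V$, so at the end you should intersect the neighbourhood from your right-uniform-continuity argument with the one obtained by applying it to $\tilde f(t)=f(t^{-1})$.
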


\begin{remark}
In this paper we will sometimes refer to the so-called inductive limit topology on $C_c(G,X)$. In these cases, we will be concerned with nets $(f_i) \in C_c(G,X)$ that converge to $f \in C_c(G,X)$, in the sense that $(f_i)$ is eventually supported in some fixed compact set $K \subset G$, and that $(f_i)$ converges uniformly to $f$ on $G$. As explained in \cite[Remark~1.86]{williams} and \cite[Appendix~D.2]{raeburnwilliams}, such a net \emph{is} convergent in the inductive limit topology, but the converse need not be true. However, it is true that a map from $C_c(G,X)$, supplied with the inductive limit topology, to a locally convex space is continuous precisely when it carries nets which converge in the above sense to convergent nets. We will use this fact in the sequel.
\end{remark}

The algebraic tensor product $C_c(G) \otimes A$ can be identified with a subspace of $C_c(G,A)$, and the following approximation result will be used on several occasions. We refer to \cite[Lemma~1.87]{williams} for the proof, from which a part of the formulation in the version below follows.

\begin{lemma}\label{l:density_lemma_inductive_limit_topology}
Let $G$ be a locally compact group, and let $X$ be Banach space. If $X_0$ is a dense subset of $X$, then $C_c(G)\otimes X_0$ is a dense subset of $C_c(G,X)$ in the inductive limit topology. In fact, it is even true that there exists a sequence $(f_n)$ in $C_c(G)\otimes X_0$, with all supports contained in a fixed compact subset of $G$ and which converges uniformly to $f$ on $G$, which implies that $f_n\to f$ in the inductive limit topology of $C_c(G,X)$.
\end{lemma}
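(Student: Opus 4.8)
The plan is to prove the concrete, sequential statement (the existence of an approximating sequence supported in a fixed compact set and converging uniformly), since this immediately implies convergence in the inductive limit topology and hence density. I would fix $f \in C_c(G,X)$ and $\eps > 0$, and aim to produce a single element $g \in C_c(G) \otimes X_0$ that is supported in a fixed compact neighbourhood of $\supp f$ and satisfies $\norm{g(s) - f(s)} < \eps$ uniformly in $s$; applying this with $\eps = 1/n$ then yields the desired sequence $(f_n)$.

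First I would record the compactness data: let $K = \supp f$, choose a compact symmetric neighbourhood $V_0$ of $e$, and work inside the fixed compact set $K' = \overline{V_0 K}$, which will contain the supports of all approximants. Next I would invoke the uniform continuity of $f$ from Lemma~\ref{l:uniformly_continuous} to obtain a neighbourhood $V \subseteq V_0$ of $e$ such that $s^{-1}r \in V$ implies $\norm{f(s) - f(r)} < \eps/3$. Using compactness of $K$, I would cover $K$ by finitely many translates $s_1 V, \dots, s_n V$ and take a partition of unity $\{\varphi_j\}_{j=1}^n \subset C_c(G)$ subordinate to this cover, with each $\varphi_j$ supported in $s_j V$ and $\sum_j \varphi_j \equiv 1$ on $K$, all supports inside $K'$. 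The candidate approximant is then
\[
g(s) = \sum_{j=1}^n \varphi_j(s)\, x_j,
\]
where each $x_j \in X_0$ is chosen, using density of $X_0$ in $X$, so that $\norm{x_j - f(s_j)} < \eps/3$. By construction $g \in C_c(G) \otimes X_0$ with $\supp g \subseteq K'$.

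The estimate is then routine: for $s \in G$, since the $\varphi_j(s)$ are nonnegative and sum to something $\leq 1$ (and to $1$ where $f$ is nonzero), I would write $g(s) - f(s) = \sum_j \varphi_j(s)(x_j - f(s))$ on the relevant region and bound each surviving term using $\varphi_j(s) \neq 0 \Rightarrow s \in s_j V$, so that $\norm{f(s) - f(s_j)} < \eps/3$, together with $\norm{x_j - f(s_j)} < \eps/3$; on the region where $\sum_j \varphi_j(s) < 1$ one must also account for the defect, which is controlled because there $f(s)$ is small or the missing mass is matched against $f(s)$ itself. The one genuine subtlety — the step I expect to require the most care — is handling points $s$ near the boundary of $K$ where $\sum_j \varphi_j(s)$ drops below $1$: there the naive expansion leaves an uncontrolled term $(1 - \sum_j \varphi_j(s)) f(s)$, and one must verify that such $s$ lie close enough to $K$ (within $V$ of some $s_j$) that $\norm{f(s)}$ is itself small, or arrange the cover so that $\{\,\sum_j \varphi_j = 1\,\}$ contains $K$ with a margin. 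Once this boundary bookkeeping is arranged, the uniform bound $\norm{g - f}_\infty < \eps$ follows, completing the argument. Since this is a standard partition-of-unity approximation, I would in practice simply cite \cite[Lemma~1.87]{williams} for the core construction, as the statement of the lemma already does, and only supply the passage to the sequential formulation.
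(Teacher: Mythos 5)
Your proof is correct and is exactly the standard partition-of-unity argument behind \cite[Lemma~1.87]{williams}, which is all the paper itself offers: the text gives no independent proof of this lemma and simply cites that result. The one ``genuine subtlety'' you flag dissolves immediately in your own setup: since $\sum_j \varphi_j = 1$ on $K = \supp f$ and $f$ vanishes off $K$, the defect term $\bigl(1 - \sum_j \varphi_j(s)\bigr) f(s)$ is identically zero, so no boundary bookkeeping is needed and at every $s$ one only estimates $\sum_j \varphi_j(s)\,(x_j - f(s))$ using $\sum_j \varphi_j \leq 1$. (One minor slip: with the cover by right translates $s_j V$, $s_j \in K$, the fixed compact set containing all supports should be $K V_0$ rather than $\overline{V_0 K}$, these being different in a non-abelian group.)
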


\subsection{Vector valued integration}\label{subsec:vector_valued_integration}

For vector-valued integration in Banach spaces, we base ourselves on an integral defined by duality. The pertinent definition, as well as the existence, are contained in the next result, for the proof of which we refer to \cite[Theorem~3.27]{rudin} or \cite[Lemma~1.91]{williams}.

\begin{theorem}\label{t:vector_valued_integration}
 Let $G$ be a locally compact group, and let $X$ be a Banach space with dual space $X'$. Then there is a linear map $f \mapsto \int_G f(s) ds$ from $C_c(G,X)$ to $X$ which is characterized by
\begin{align}\label{d:characterization_of_integral}
\left\langle \int_G f(s) \ds, x' \right\rangle = \int_G \langle f(s), x' \rangle \ds, \quad \forall f \in C_c(G,X), \forall x' \in X'.
\end{align}
\end{theorem}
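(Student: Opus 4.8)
The plan is to construct the integral by duality and then to verify that the resulting functional on $X'$ is in fact represented by an element of $X$. I would begin with uniqueness, which is immediate: if $y_1,y_2\in X$ both satisfy~\eqref{d:characterization_of_integral} for a given $f$, then $\langle y_1-y_2,x'\rangle=0$ for every $x'\in X'$, whence $y_1=y_2$ by the Hahn--Banach theorem. This also yields linearity of $f\mapsto\int_G f(s)\ds$ once existence is known, since for fixed $f,g$ the element $\int_G f(s)\ds+\int_G g(s)\ds$ satisfies the characterization~\eqref{d:characterization_of_integral} associated with $f+g$, and similarly for scalar multiples.

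For existence, fix $f\in C_c(G,X)$ and put $K=\supp f$. For each $x'\in X'$ the scalar function $s\mapsto\langle f(s),x'\rangle$ lies in $C_c(G)$, so $\Lambda(x'):=\int_G\langle f(s),x'\rangle\ds$ is well defined, and $\Lambda$ is a linear functional on $X'$ satisfying $|\Lambda(x')|\leq\norm{x'}\int_G\norm{f(s)}\ds$; thus $\Lambda\in X''$. The crux is to show that $\Lambda$ lies in the canonical image of $X$ in $X''$, and I would do this by a Riemann-sum approximation. Given $\eps>0$, Lemma~\ref{l:uniformly_continuous} provides a finite open cover $V_1,\ldots,V_n$ of $K$ such that $\norm{f(s)-f(r)}<\eps$ whenever $s$ and $r$ lie in a common $V_j$; choosing $s_j\in V_j$ and a partition of unity $\{\phi_j\}\subset C_c(G)$ with $\phi_j\geq 0$, $\supp\phi_j\subset V_j$, and $\sum_j\phi_j=1$ on $K$, I set
\[
y_\eps=\sum_{j=1}^n\brackets{\int_G\phi_j(s)\ds}f(s_j)\in X.
\]
Since $\sum_j\phi_j\equiv 1$ on $\supp f$, for every $x'\in X'$ with $\norm{x'}\leq 1$ one has $\langle y_\eps,x'\rangle-\Lambda(x')=\int_G\sum_j\phi_j(s)\langle f(s_j)-f(s),x'\rangle\ds$, and on the support of the integrand $s$ and $s_j$ lie in a common $V_j$, so the integrand is bounded by $\eps$. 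This gives $\norm{y_\eps-\Lambda}_{X''}\leq\eps\,\mu(K')$, where $K'$ is a fixed compact set containing all the $\supp\phi_j$.

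Finally, because the canonical embedding $X\hookrightarrow X''$ is isometric, the estimate above shows that the net $(y_\eps)$, all of whose members lie in $X$, is Cauchy in $X$; by completeness of $X$ it converges to some $y\in X$, and passing to the limit in $X''$ yields $\langle y,x'\rangle=\Lambda(x')$ for all $x'\in X'$, that is, $y$ satisfies~\eqref{d:characterization_of_integral}. The main obstacle is precisely this last point---upgrading membership of $\Lambda$ in $X''$ to membership in $X$---and it is here that the Banach space hypothesis (completeness) together with the compact support and uniform continuity of $f$ are essential; for a merely normed space $X$ the limit need not exist.
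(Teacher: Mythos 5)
Your proof is correct, but note that the paper does not prove Theorem~\ref{t:vector_valued_integration} itself: it refers to \cite[Theorem~3.27]{rudin} and \cite[Lemma~1.91]{williams}, and your argument follows neither of those routes. Rudin obtains the integral by a compactness argument: in a Banach space the closed convex hull of the compact set $f(\supp f)$ is compact, and a finite-intersection-property/separation argument produces a point of a scaled copy of that hull satisfying the weak characterization; this buys the extra information that $\int_G f(s)\ds$ lies in $\mu(\supp f)\cdot\overline{\mathrm{co}}\left(f(\supp f)\right)$. Williams instead defines the integral on $C_c(G)\otimes X$ by the obvious formula $\sum_i \left(\int_G z_i(s)\ds\right) x_i$ and extends it to all of $C_c(G,X)$ by density in the inductive limit topology (the paper's Lemma~\ref{l:density_lemma_inductive_limit_topology}), which meshes with how such density is used elsewhere in the paper. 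Your route---showing that the weak integral $\Lambda \in X''$ is the limit of partition-of-unity Riemann sums $y_\eps \in X$, with the cover furnished by Lemma~\ref{l:uniformly_continuous}, and then using that $X$ embeds isometrically as a closed subspace of $X''$---is self-contained and more elementary, and it yields the useful estimate $\norm{\int_G f(s)\ds} \leq \int_G \norm{f(s)}\ds$ as a by-product. Two details you should make explicit to close the argument fully: first, the sets $V_j$ must all be kept inside a fixed compact neighbourhood of $\supp f$ (shrink the uniform-continuity neighbourhood of $e$ to lie in a fixed compact neighbourhood before translating), so that the compact set $K'$, and hence the factor $\mu(K')$, genuinely does not depend on $\eps$; second, your bound on the integrand uses $\sum_j \phi_j \leq 1$ on all of $G$, not merely $\sum_j \phi_j = 1$ on $K$, so this standard property of the partition of unity should be invoked explicitly.
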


\begin{remark}\label{r:vector_valued_integration}
If $X$ and $Y$ are Banach spaces, it follows easily that bounded operators from $X$ to $Y$ can be pulled through the integral of the above theorem. If $X$ has a bounded involution this can also be pulled through the integral, since a bounded involution is a bounded conjugate linear map which can be viewed as a bounded operator from $X$ to the conjugate Banach space of $X$.

For $F \in C_c(G \times G, X)$, it is shown in \cite[Proposition~1.102]{williams} that, if one integrates out one variable, the resulting function is in $C_c(G,X)$. Applying continuous linear functionals, it then follows easily, analogously to the proof of \cite[Proposition~1.105]{williams}, that for such functions $F$ the vector-valued version of Fubini's theorem is valid.
\end{remark}

The integral from Theorem~\ref{t:vector_valued_integration} enables us to integrate compactly supported strongly (and $*$-strongly) continuous operator-valued functions (recall that unitary representations are $*$-strongly continuous by Remark \ref{r:unitary_is_*-strong-cont}).

\begin{prop}
Let $X$ be a Banach space, let $G$ be a locally compact group, and let $\psi: G \to B(X)$ be compactly supported and strongly continuous. Define
\begin{align}\label{d:pointwise_integral_for_sot_continuous_representation}
  \int_G \psi(s) \,ds := \left[ x \mapsto \int_G \psi(s)x \,ds \right] ,
\end{align}
where the integral on the right hand side is the integral from Theorem~\ref{t:vector_valued_integration}. Then $\int_G \psi(s)\,ds\in B(X)$, and
\begin{align}\label{d:norm_through_integral}
\norm{\int_G \psi(s)} \ds \leq \int_G \norm{\psi(s)} \ds.
\end{align}
If $T,R \in B(X)$, then
\begin{align}\label{d:operators_through_integral}
 T \int_G \psi(s) \ds \, R = \int_G T \psi(s) R \ds.
\end{align}
Furthermore, if $X$ is a Hilbert space and $\psi$ is $*$-strongly continuous, then
\begin{align}\label{d:involution_through_integral}
 \left( \int_G \psi(s) \ds \right)^* = \int_G \psi(s)^* \ds.
\end{align}
\end{prop}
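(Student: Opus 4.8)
The plan is to verify well-definedness of the operator, then extract both its boundedness and the norm estimate \eqref{d:norm_through_integral} from the duality characterization \eqref{d:characterization_of_integral}, and finally to obtain \eqref{d:operators_through_integral} and \eqref{d:involution_through_integral} by pairing against functionals (respectively inner products) and reducing to the corresponding identities for the scalar-valued integral.

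First I would observe that for each fixed $x\in X$ the map $s\mapsto\psi(s)x$ is continuous, by strong continuity of $\psi$, and is supported in $\supp\psi$; hence it lies in $C_c(G,X)$ and $\int_G\psi(s)x\ds$ exists by Theorem~\ref{t:vector_valued_integration}. Linearity of that integral in its argument shows that $x\mapsto\int_G\psi(s)x\ds$ is a linear map, so \eqref{d:pointwise_integral_for_sot_continuous_representation} does define a map $X\to X$.

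The step requiring the most care, and the one I expect to be the main obstacle, is to establish that $s\mapsto\norm{\psi(s)}$ is integrable, since strong continuity does not make this function continuous. For boundedness, the argument of Lemma~\ref{l:str_cont_compactly_bounded} applies: for each $x$ the set $\{\psi(s)x : s\in\supp\psi\}$ is compact, hence bounded, so by the uniform boundedness principle $\sup_{s}\norm{\psi(s)}<\infty$. For measurability, I would note that $s\mapsto\norm{\psi(s)}=\sup_{\norm{x}\leq 1}\norm{\psi(s)x}$ is a supremum of continuous functions, hence lower semicontinuous and therefore Borel. As $\supp\psi$ is compact and thus of finite Haar measure, it follows that $\int_G\norm{\psi(s)}\ds<\infty$. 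With this in hand the norm estimate is immediate: for $x'\in X'$ with $\norm{x'}\leq 1$, the characterization \eqref{d:characterization_of_integral} gives $\left|\left\langle\int_G\psi(s)x\ds,x'\right\rangle\right|=\left|\int_G\langle\psi(s)x,x'\rangle\ds\right|\leq\norm{x}\int_G\norm{\psi(s)}\ds$, and taking the supremum over such $x'$ (Hahn--Banach) yields $\norm{\int_G\psi(s)x\ds}\leq\norm{x}\int_G\norm{\psi(s)}\ds$. This simultaneously proves that the operator is bounded and establishes \eqref{d:norm_through_integral}.

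For \eqref{d:operators_through_integral} I would first note that $s\mapsto T\psi(s)R$ is again strongly continuous and supported in $\supp\psi$, so the right-hand side is defined by the same construction. Evaluating both sides at $x\in X$, the left-hand side equals $T\int_G\psi(s)(Rx)\ds$, which by Remark~\ref{r:vector_valued_integration} (bounded operators may be pulled through the integral) equals $\int_G T\psi(s)Rx\ds$; this is exactly the right-hand side applied to $x$. Finally, for \eqref{d:involution_through_integral}, $*$-strong continuity makes $s\mapsto\psi(s)^*$ strongly continuous and compactly supported, so $\int_G\psi(s)^*\ds$ is defined. For $x,y$ in the Hilbert space, applying \eqref{d:characterization_of_integral} to the functional $\langle\,\cdot\,,y\rangle$ gives $\left\langle\int_G\psi(s)\ds\,x,y\right\rangle=\int_G\langle\psi(s)x,y\rangle\ds=\int_G\langle x,\psi(s)^*y\rangle\ds$, while the same characterization applied to $\langle\,\cdot\,,x\rangle$ together with conjugate symmetry gives $\left\langle x,\int_G\psi(s)^*\ds\,y\right\rangle=\int_G\langle x,\psi(s)^*y\rangle\ds$. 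As these agree for all $x,y$, the operator $\int_G\psi(s)^*\ds$ is the adjoint of $\int_G\psi(s)\ds$, which is \eqref{d:involution_through_integral}.
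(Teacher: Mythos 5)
Your proof is correct and follows essentially the same route as the paper's: identity \eqref{d:operators_through_integral} by applying elements of $X$ and functionals, the estimate \eqref{d:norm_through_integral} by applying elements and taking norms via duality, and \eqref{d:involution_through_integral} by the inner-product computation. The only difference is that you explicitly justify the integrability of $s\mapsto\norm{\psi(s)}$ (boundedness via the uniform boundedness principle and measurability via lower semicontinuity), a point the paper leaves implicit.
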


\begin{proof}
 By applying elements of $X$ and functionals we obtain \eqref{d:operators_through_integral}, while \eqref{d:norm_through_integral} follows from applying elements of $X$ and taking norms. As for \eqref{d:involution_through_integral}, let $x,y \in X$ be arbitrary, then
\begin{align*}
 \left\langle x, \left( \int_G \psi(s) \ds \right)^* y \right\rangle &= \left\langle \left( \int_G \psi(s) \ds \right) x, y \right\rangle \\
&= \int_G \langle \psi(s)x, y \rangle \ds \\
&= \int_G \langle x, \psi(s)^* y \rangle \ds \\
&= \left\langle x, \left( \int_G \psi(s)^* \ds \right) y \right\rangle,
\end{align*}
where the $*$-strong continuity of $\psi$ ensures that the last line is well defined. Since this holds for all $x,y \in X$, \eqref{d:involution_through_integral} follows.
\end{proof}

\subsection{Quotients}\label{subsec:quotients}

The following standard type result, with a routine proof, will be used many times over, often implicitly. If $(D, \sigma)$ and $(E, \tau)$ are seminormed spaces, a linear map $T: D \to E$, is said to be bounded if there exists $C \geq 0$ such that $\tau(Tx) \leq C \sigma(x)$, for all $x \in D$. The seminorm (which is a norm if $\tau$ is a norm) of $T$, is then defined to be the minimal such $C$.

\begin{lemma}\label{l:completion_associated_with_seminorm}
Let $(D,\sigma)$ and $(E, \tau)$ be seminormed spaces, let $\overline{D / \ker(\sigma)}^\sigma$ be the completion of $D / \ker(\sigma)$ in the norm induced by $\sigma$, and let $\overline{E / \ker(\tau)}^\tau$ be defined similarly. Suppose that $T:D \to E$ is a bounded linear map. Then $T[\ker(\sigma)] \subset \ker(\tau)$ and, with $q^\sigma$ and $q^\tau$ denoting the canonical maps, there exists a unique bounded operator $\widetilde{T} :\overline{D / \ker(\sigma)}^\sigma \to \overline{E / \ker(\tau)}^\tau$, such that the diagram
\begin{equation}
\xymatrix
{
D \ar[d]_{q^\sigma} \ar[r]^{T} & E \ar[d]_{q^\tau} \\
\overline{D / \ker(\sigma)}^\sigma \ar[r]_{\widetilde{T}} & \overline{E / \ker(\tau)}^\tau
}
\end{equation}
is commutative. The norm of $\widetilde{T}$ then equals the seminorm of $T$. In particular, if $(E,\tau)$ is a Banach space, then $T \mapsto \widetilde{T}$ is a Banach space isometry between the bounded operators from $D$ into $E$ and the bounded operators from $\overline{D / \ker(\sigma)}^\sigma$ into $E$.

If, in addition, $(D,\sigma)$ is a seminormed algebra, $E$ is a Banach algebra, and $T$ is a bounded algebra homomorphism, then $\ker(\sigma)$ is a two-sided ideal, $\overline{D / \ker(\sigma)}^\sigma$ is a Banach algebra, and $\widetilde{T}$ is a bounded algebra homomorphism. In particular, if $X$ is a Banach space and $T: D \to B(X)$ is a bounded representation, then $\widetilde{T}$ is a bounded representation of $\overline{D / \ker(\sigma)}^\sigma$. In this case, $T$ is non-degenerate if and only if $\widetilde{T}$ is non-degenerate, a closed subspace of $X$ is invariant for $T$ if and only if it is invariant for $\widetilde{T}$, and if $Y$ is a Banach space, $\Phi \in B(X,Y)$ and $S: D \to B(Y)$ is a bounded representation, then $\Phi$ intertwines $T$ and $S$ if and only if $\Phi$ intertwines $\widetilde{T}$ and $\widetilde{S}$.

Alternatively, if, in addition, $D$ is an algebra with an involution, $\sigma$ is a $C^*$-seminorm, $E$ is a Banach algebra with a \textup{(}possibly unbounded\textup{)} involution, and $T$ is a bounded involutive algebra homomorphism, then $\ker(\sigma)$ is a self-adjoint two-sided ideal, $\overline{D / \ker(\sigma)}^\sigma$ is a $C^*$-algebra, and $\widetilde{T}$ is a bounded involutive algebra homomorphism.
\end{lemma}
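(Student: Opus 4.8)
The plan is to reduce every clause of the statement to one observation: boundedness of $T$ forces $T[\ker(\sigma)]\subset\ker(\tau)$. First I would record this (if $\sigma(x)=0$ then $\tau(Tx)\leq C\sigma(x)=0$, $C$ being the seminorm of $T$), so that $T$ descends to a well-defined linear map $\overline{T}_0\colon D/\ker(\sigma)\to E/\ker(\tau)$ via $\overline{T}_0(q^\sigma(x))=q^\tau(Tx)$. Since the induced norms read off representatives, $\|q^\sigma(x)\|=\sigma(x)$ and $\|q^\tau(Tx)\|=\tau(Tx)$, the inequality $\tau(Tx)\leq C\sigma(x)$ shows $\|\overline{T}_0\|\leq C$, and reading the same chain backwards shows $\|\overline{T}_0\|$ is itself an admissible constant for $T$, giving $\|\overline{T}_0\|=C$. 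I would then compose with the isometric embedding $E/\ker(\tau)\hookrightarrow\overline{E/\ker(\tau)}^\tau$ into a complete space and extend by density and uniform continuity to the unique bounded $\widetilde{T}$ making the square commute, with $\|\widetilde{T}\|=\|\overline{T}_0\|=C$; uniqueness is automatic since two candidates agreeing on the dense image of $q^\sigma$ agree everywhere. When $(E,\tau)$ is already Banach, this norm equality is exactly the isometry assertion, and surjectivity of $T\mapsto\widetilde{T}$ follows by realising any bounded $S$ as $\widetilde{T}$ for $T:=S\circ q^\sigma$.

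Next I would handle the algebra statements. Submultiplicativity of $\sigma$ makes $\ker(\sigma)$ absorb products on both sides, hence a two-sided ideal, so the quotient is a normed algebra and its completion a Banach algebra. If $T$ is multiplicative then $\overline{T}_0$ is multiplicative on $D/\ker(\sigma)$, and since multiplication is jointly continuous and $q^\sigma(D)$ is dense, $\widetilde{T}$ is multiplicative throughout $\overline{D/\ker(\sigma)}^\sigma$. The representation-theoretic refinements with $E=B(X)$ I would deduce entirely from the identity $\widetilde{T}(q^\sigma(x))=T(x)$, so that $\widetilde{T}(q^\sigma(D))=T(D)$ with $q^\sigma(D)$ dense: non-degeneracy is preserved because $\overline{\widetilde{T}(\overline{D/\ker(\sigma)}^\sigma)\cdot X}=\overline{T(D)\cdot X}$ by continuity of $\widetilde{T}$ and of the action on $X$; a closed subspace invariant under the dense family $\{T(x)\}$ is automatically invariant under its limits $\{\widetilde{T}(d)\}$; and the intertwining relation $\Phi\circ T(x)=S(x)\circ\Phi$ propagates from $q^\sigma(D)$ to $\Phi\circ\widetilde{T}(d)=\widetilde{S}(d)\circ\Phi$ by continuity, the converse being mere restriction.

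For the involutive case I would first extract, from the $C^*$-seminorm identity $\sigma(x^*x)=\sigma(x)^2$ and submultiplicativity, the estimate $\sigma(x)^2=\sigma(x^*x)\leq\sigma(x^*)\sigma(x)$, hence $\sigma(x)\leq\sigma(x^*)$ and by symmetry $\sigma(x^*)=\sigma(x)$. Thus $\ker(\sigma)$ is self-adjoint as well as a two-sided ideal, the quotient norm satisfies $\|q^\sigma(x)^*q^\sigma(x)\|=\sigma(x^*x)=\sigma(x)^2=\|q^\sigma(x)\|^2$, and its completion is a $C^*$-algebra. Involutivity of $T$ gives $\widetilde{T}(q^\sigma(x)^*)=\widetilde{T}(q^\sigma(x^*))=T(x^*)=T(x)^*=\widetilde{T}(q^\sigma(x))^*$, so $\widetilde{T}$ is involutive on the dense $*$-subalgebra $q^\sigma(D)$.

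The hardest part will be passing this $*$-compatibility to the completion, precisely because the involution on $E$ is only assumed to exist and may be unbounded, so $e\mapsto e^*$ cannot in general be moved inside limits on all of $E$. What rescues the argument is that the involution on the $C^*$-algebra $\overline{D/\ker(\sigma)}^\sigma$ is isometric: approximating $d$ by $q^\sigma(x_n)$ forces $q^\sigma(x_n)^*=q^\sigma(x_n^*)\to d^*$, whence $\widetilde{T}(d^*)=\lim_n T(x_n^*)=\lim_n T(x_n)^*$, while $\widetilde{T}(d)^*=(\lim_n T(x_n))^*$. Equality of these two therefore reduces exactly to the adjoint on $E$ respecting the convergent sequences $T(x_n)$, i.e.\ to its being closed on the relevant range; this is the hypothesis I would flag and verify in the intended applications (for instance $E$ a $C^*$-algebra, a multiplier algebra, or $B(H)$ with its isometric adjoint). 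Granting that, $\widetilde{T}$ is a bounded involutive homomorphism and the argument concludes.
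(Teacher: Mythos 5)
Everything through the representation-theoretic refinements is correct and is exactly the argument the paper intends: the paper states this lemma without proof, calling it ``a standard type result, with a routine proof'', and your treatment of the descent to $D/\ker(\sigma)$, the equality of the operator norm of $\widetilde{T}$ with the seminorm of $T$, the isometry and its surjectivity via $T:=S\circ q^\sigma$, the ideal property of $\ker(\sigma)$, and the density arguments for multiplicativity, non-degeneracy, invariant subspaces and intertwiners is precisely that routine proof. Your derivation of $\sigma(x^*)=\sigma(x)$ from the $C^*$-identity, the self-adjointness of $\ker(\sigma)$, and the $C^*$-structure of the completion are also the standard steps.

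Where you depart from the paper is the final clause, and your flag there is legitimate rather than a defect of your reasoning: the routine density argument gives $\widetilde{T}(d^*)=\lim_n T(x_n)^*$ and $\widetilde{T}(d)^*=\bigl(\lim_n T(x_n)\bigr)^*$, and identifying these is exactly the requirement that the involution of $E$ respect the relevant convergent sequences. Two sharpenings of your remark are worth recording. First, your hypothesis can be restated: since an involution on $E$ is everywhere defined, the closed graph theorem (which applies equally to conjugate-linear maps) shows that a closed involution on a Banach algebra is automatically bounded; so ``closedness'' is not genuinely weaker than boundedness except in the restricted form you actually use (closedness along sequences in the closed subalgebra generated by $T(D)$). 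Second, the discrepancy is with the lemma's ``(possibly unbounded)'' qualifier on the involution of $E$, under which the routine argument does not establish involutivity of $\widetilde{T}$; whether the clause is true in that generality is a delicate matter of automatic-continuity type, not settled by any density argument. The issue is harmless for this paper, since every invocation of the involutive clause has $E=B(X)$ for a Hilbert space $X$ or a $C^*$-algebra, with isometric involution --- see the discussion following \eqref{e:intformr_definition}, part $(iv)$ of Theorem~\ref{t:summary_from_dyn_sys_to_crossed_product}, and the final theorem of Section~\ref{subsec:trivial_group} --- so your conditional proof covers all uses made of the lemma.
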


\section{Crossed product: construction and basic properties}\label{sec:construction_and_basic_properties}

Let $\dynsys$ be a Banach algebra dynamical system, and let $\mr$ be a class of (possibly degenerate) continuous covariant representations of $\dynsys$ on Banach spaces. We will construct a Banach algebra, denoted $\crosprod$, which deserves to be called the crossed product associated with $\dynsys$ and $\mr$, and establish some basic properties. As will become clear from the discussion below, an additional condition on $\mr$ is needed (see Definition~\ref{d:uniformly_bounded_class}), which is automatic, hence ``not visible'', in the case of crossed product $C^*$-algebras. In later sections, we will also require the elements of $\mr$ to be non-degenerate, but for the moment this is not necessary.

We now start with the construction. Analogously to the $C^*$-algebra case, this construction is based on the vector space $C_c(G,A)$, as follows.

Let $f,g\in C_c(G,A)$. As a consequence of Lemma~\ref{l:map_continuous}, the function $(s,r) \mapsto f(r) \alpha_r(g(r^{-1}s))$ is in $C_c(G \times G, A)$. Therefore, if $s\in G$, Remark~\ref{r:vector_valued_integration} implies that
\[ [f * g](s):= \int_G f(r)\alpha_r(g(r^{-1}s)) \,dr \]
is a well-defined element of $A$, and that the thus defined map $f*g:G\to A$, the twisted convolution product of $f$ and $g$, is in $C_c(G,A)$. The associativity of this product on $C_c(G,A)$ is easily shown using Fubini, and thus $C_c(G,A)$ has the structure of an associative algebra. An easy computation shows that $\supp(f * g) \subset \supp(f) \cdot \supp(g)$.

Furthermore, if $\dynsys$ is an involutive Banach algebra dynamical system, then the formula
\begin{equation}\label{e:involution_definition}
f^*(s) := \Delta(s^{-1}) \alpha_s (f(s^{-1})^*)
\end{equation}
defines an involution on $C_c(G,A)$, so that $C_c(G,A)$ becomes an involutive algebra. The proof of this fact relies on a computation as in \cite[page~48]{williams}, which, as the reader may verify, is valid again because the involution on $A$ is bounded and hence can be pulled through the integral by Remark~\ref{r:vector_valued_integration}.

Our next step is to find an algebra seminorm on $C_c(G,A)$. It is here that a substantial difference with the construction as in \cite{williams} for crossed products associated with $C^*$-algebras occurs, leading to Definition~\ref{d:uniformly_bounded_class}.

To start with, assume that $\covrep$ is a continuous covariant representation in a Banach space $X$. For $f \in C_c(G,A)$, the function $s \mapsto \pi(f(s))U_s$ is strongly continuous from $G$ into $B(X)$ by continuity of multiplication in the strong operator topology on uniformly bounded subsets.  Therefore we can define
\begin{align}\label{d:intformdef}
 \intform(f) := \int_G \pi(f(s))U_s \ds,
\end{align}
where the integral on the right-hand side is as in \eqref{d:pointwise_integral_for_sot_continuous_representation}. Note that if $\covrep$ is involutive, $U$ is $*$-strongly continuous by Remark \ref{r:unitary_is_*-strong-cont}, so $s \mapsto \pi(f(s))U_s$ is $*$-strongly continuous by continuity of multiplication in the $*$-strong operator topology on uniformly bounded subsets, hence the involution can be pulled through the integral by \eqref{d:involution_through_integral}. Therefore the computations in the proof of \cite[Proposition~2.23]{williams} are valid, and they show that $\intform$, called the integrated form of $\covrep$, is a representation of $C_c(G,A)$, and that it is involutive if $\dynsys$ is involutive and $\covrep$ is an involutive continuous covariant representation.

With the construction as in \cite{williams} as a model, the natural way to construct a normed algebra from the associative algebra $C_c(G,A)$, given a collection $\mr$ of (possibly degenerate) continuous covariant representations on Banach spaces of the Banach algebra dynamical system $\dynsys$, is then as follows. For a continuous covariant representation $\covrep$ and $f \in C_c(G,A)$, we define $\sigma_{\covrep}(f) := \norm{\intform(f)}$. Since $\intform$ is a representation of $C_c(G,A)$, the map $\sigma_{\covrep}:C_c(G,A) \to [0,\infty)$ is an algebra seminorm on $C_c(G,A)$. Moreover, if $\covrep$ is an involutive continuous covariant representation, then $\intform$ is involutive and hence $\sigma_{\covrep}$ is a $C^*$-seminorm on $C_c(G,A)$.

Lemma~\ref{l:str_cont_compactly_bounded} shows that $U$ is bounded on compact sets $K$ by constants $M_K(U)$, and so we obtain the estimate
\begin{equation}\label{e:sigmabound}
  \sigma_{\covrep}(f) = \norm{\int_G \pi(f(s))U_s \,ds} \leq \norm{\pi} M_{\supp(f)}(U) \norm{f}_{L^1(G,A)}.
\end{equation}
 Now in the case of $C^*$-algebra dynamical systems, one takes the supremum over all such $C^*$-seminorms corresponding to (non-degenerate) involutive continuous covariant representations $\covrep$, and one defines the corresponding crossed product as the completion of $C_c(G,A)$ with respect to this seminorm (which can then be shown to be a norm). This is meaningful: since the constants $\norm{\pi}$ and $M_{\supp(f)}(U)$ in \eqref{e:sigmabound} are then always equal to 1, regardless of the choice of $\covrep$, the supremum is, indeed, pointwise finite. In general situations, when one wants to construct, in a similar way, a crossed product associated with a given class $\mr$ of continuous covariant representations, this supremum over the class need no longer be pointwise finite. Furthermore, even if the supremum \emph{is} well-defined, this supremum seminorm need not be a norm on $C_c(G,A)$. The solution to these problems is, obviously, to consider only classes $\mr$ such that there is a uniform bound for $\pi$ and $M_{\supp(f)}(U)$ in \eqref{e:sigmabound}, as $\covrep$ ranges over $\mathcal R$, and to use a quotient in the construction, as in Lemma~\ref{l:completion_associated_with_seminorm}. This leads to the following definitions.

\begin{defn}\label{d:uniformly_bounded_class}
Let $\dynsys$ be a Banach algebra dynamical system, and suppose $\mr$ is a class of continuous covariant representations of $\dynsys$. Then $\mr$ is called \emph{uniformly bounded} if there exist a constant $C \geq 0$ and a function $\nu:G\to[0,\infty)$, which is bounded on compact subsets of $G$, such that, for all $\covrep$ in $\mr$, $\norm{\pi} \leq C$ and $\norm{U_r}\leq \nu(r)$, for all $r \in G$.

If $\mr$ is a non-empty uniformly bounded class of continuous covariant representations, we let $\Cr = \sup_{\covrep\in\mr}\norm{\pi}$ be the minimal such $C$, and we let
\begin{equation}\label{e:nur_definition}
\nur(r):=\sup_{\covrep \in \mr} \norm{U_r},                                                                                                                      \end{equation}
where $r\in G$, be the minimal such $\nu$.\footnote{Since $r\mapsto\norm{U_r}$ is the supremum of continuous functions, it is a lower semicontinuous function on $G$, and hence the same holds for $\nur$.}

If $\dynsys$ is involutive, then $\mr$ is said to be involutive if $\covrep$ is involutive for all $\covrep\in\mr$.
\end{defn}

\begin{defn}\label{d:crossed_product}
Let $\dynsys$ be a Banach algebra dynamical system, and suppose $\mr$ is a non-empty uniformly bounded class of continuous covariant representations. Then we define the algebra seminorm $\sigmar$ on $C_c(G,A)$ by
\begin{equation*}
\sigmar(f) := \sup_{\covrep \in \mr} \norm{\intform(f)},
\end{equation*}
for $f\in C_c(G,A)$, and we let the corresponding crossed product $\crosprod$ be the completion of $C_c(G,A)/\ker(\sigmar)$ in the norm $\normr{\,.\,}$ induced by $\sigmar$. Multiplication in $\crosprod$ will still be denoted by $*$.
\end{defn}

\begin{remark}\label{r:seminorm_remark}\quad
\begin{enumerate}
\item From now on, all representations are assumed to be on Banach spaces rather than on normed spaces, since this is needed when integrating.
\item
Obviously, $\sigmar$ in Definition~\ref{d:crossed_product} is indeed finite, since, as in \eqref{e:sigmabound},
\[
\sigmar(f) \leq \Cr \left( \sup_{r\in\supp(f)} \nur(r)\right)\norm{f}_1 < \infty,
\]
for $f\in C_c(G,A)$.
\item By construction, $\crosprod$ is a Banach algebra. If $\dynsys$ and $\mathcal R$ are both involutive, then the seminorms $\sigma_{\covrep}$, for $\covrep\in\mr$, are all $C^*$-seminorms on $C_c(G,A)$, and hence the same holds for their supremum $\sigmar$. As has already been observed in Lemma~\ref{l:completion_associated_with_seminorm}, this implies that $\crosprod$ is then a $C^*$-algebra.
\end{enumerate}
\end{remark}

If $\dynsys$ is a Banach algebra dynamical system, and $\mr$ is a non-empty uniformly bounded class of continuous covariant representations, then the corresponding quotient map from Lemma~\ref{l:completion_associated_with_seminorm} will be denoted by $\qr$, rather than $q^{\sigmar}$. Hence
\[
\qr: C_c(G,A)\to\crosprod
\]
is the quotient homomorphism. Likewise, if $E$ is Banach space, and the linear maps $T: C_c(G,A) \to E$ and $S: C_c(G,A) \to C_c(G,A)$ are $\sigmar$-bounded, then their norms will be denoted by $\Vert T \Vert^\mr$ and $\norm{S}^\mr$, and the corresponding bounded operators from Lemma~\ref{l:completion_associated_with_seminorm} will be denoted by $T^\mr$ and $S^\mr$, with norms $\norm{T^\mr} = \Vert T\Vert^\mr$ and $\norm{S^\mr} = \Vert S \Vert^\mr$. Hence $T^\mr:\crosprod\to E$ and $S^\mr: \crosprod \to \crosprod$ are determined by
\begin{equation}\label{e:Tr_definition}
T^\mr(\qr(f))=T(f), \quad S^\mr(\qr(f)) = \qr(S(f)),
\end{equation}
for all $f\in C_c(G,A)$.

If $\covrep\in\mathcal R$ is a continuous covariant representation in the Banach space $X$, then $\intform : C_c(G,A) \to B(X)$ is certainly $\sigmar$ bounded, with norm at most 1. Hence there is a corresponding contractive representation $(\intform)^\mr: \crosprod \to B(X)$ of the Banach algebra $\crosprod$ in $X$, determined by
\begin{equation}\label{e:intformr_definition}
(\intform)^\mr(\qr(f)) = \intform(f),
\end{equation}
for all $f\in C_c(G,A)$. If $\dynsys$ and $\mr$ are involutive, and $X$ is the Hilbert representation space for $\covrep\in\mr$, then $(\intform)^\mr : \crosprod \to B(X)$ is an involutive representation of the $C^*$-algebra $\crosprod$ in the Hilbert space $X$. It is contractive by construction, although this is of course also automatic.

Suppose that $\mr$ is a uniformly bounded class of continuous covariant representations. By construction, we have, for $f\in C_c(G,A)$,
\begin{align*}
\normr{\qr(f)} = \sigmar(f) &= \sup_{\covrep \in \mathcal{R}} \norm{\intform (f)} = \sup_{\covrep \in \mathcal{R}} \norm{(\intform)^\mr (\qr(f))}.
\end{align*}
For later use, we establish that this formula for the norm in $\crosprod$ extends from $\qr(C_c(G,A))$ to the whole crossed product. The separation property that is immediate from it, will later find a parallel for the left centralizer algebra $\mathcal M_l(\crosprod)$ of $\crosprod$ in Proposition~\ref{p:representations_separate_left_centralizers}, under the extra conditions that $A$ has a bounded left approximate identity and that $\mr$ consists of non-degenerate continuous covariant representations only.

\begin{prop}\label{p:norm_formula_in_crossed_product}
Let $\dynsys$ be a Banach algebra dynamical system and $\mathcal R$ a non-empty uniformly bounded class of continuous covariant representations. Then, for all $c \in \crosprod$,
\[
\normr{c}=\sup_{\covrep \in \mathcal{R}} \norm{(\intform)^\mr (c)}.
\]
In particular, the representations $(\intform)^\mr$, for $\covrep\in\mathcal R$, separate the points of $\crosprod$.
\end{prop}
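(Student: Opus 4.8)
The plan is to exploit that the asserted norm formula is already known to hold on the dense subspace $\qr(C_c(G,A))$ of $\crosprod$ (this is exactly the computation displayed just before the proposition), and to propagate it to the completion by a routine approximation argument, using crucially that each integrated form $(\intform)^\mr$ is contractive. Write $N(c):=\sup_{\covrep\in\mr}\norm{(\intform)^\mr(c)}$ for $c\in\crosprod$; the goal is to show $N(c)=\normr{c}$. The easy half is immediate: since every $(\intform)^\mr$ is contractive by construction, $\norm{(\intform)^\mr(c)}\le\normr{c}$ for each $\covrep\in\mr$, and taking the supremum over $\mr$ gives $N(c)\le\normr{c}$.

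For the reverse inequality $N(c)\ge\normr{c}$ I would fix $\eps>0$ and, using density of $\qr(C_c(G,A))$ in $\crosprod$, choose $f\in C_c(G,A)$ with $\normr{c-\qr(f)}<\eps$. The decisive point is to transfer this approximation into each representation \emph{before} taking the supremum: for every $\covrep\in\mr$, contractivity yields
\[
\norm{(\intform)^\mr(c)}\ge\norm{(\intform)^\mr(\qr(f))}-\norm{(\intform)^\mr(c-\qr(f))}\ge\norm{(\intform)^\mr(\qr(f))}-\eps.
\]
Taking the supremum over $\covrep\in\mr$ and invoking the formula already established on $\qr(C_c(G,A))$, namely $\sup_{\covrep\in\mr}\norm{(\intform)^\mr(\qr(f))}=\normr{\qr(f)}$, gives $N(c)\ge\normr{\qr(f)}-\eps\ge\normr{c}-2\eps$, where the last step uses $\normr{\qr(f)}\ge\normr{c}-\normr{c-\qr(f)}$. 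Letting $\eps\downarrow 0$ yields $N(c)\ge\normr{c}$, and combined with the easy half this proves the equality.

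The separation statement is then an immediate corollary: if $(\intform)^\mr(c)=0$ for all $\covrep\in\mr$, then $N(c)=0$, so $\normr{c}=0$ and hence $c=0$ in $\crosprod$.

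I expect no genuine obstacle here; the only subtlety worth flagging is the order of operations in the reverse inequality. One is tempted to argue via semicontinuity of $N$ — being a supremum of the continuous functions $c\mapsto\norm{(\intform)^\mr(c)}$, it is lower semicontinuous — but that only reproduces the easy inequality $N(c)\le\liminf\normr{\qr(f_n)}=\normr{c}$ and not the direction one actually needs. The correct maneuver is to use contractivity to bound $\norm{(\intform)^\mr(c)}$ from below by $\norm{(\intform)^\mr(\qr(f))}-\eps$ uniformly in $\covrep$, and only \emph{afterwards} pass to the supremum, so that the known formula on the dense subspace can be applied.
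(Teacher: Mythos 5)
Your proof is correct and follows essentially the same route as the paper: both halves rest on contractivity of the $(\intform)^\mr$ plus density of $\qr(C_c(G,A))$, with a triangle-inequality transfer of the known formula on the dense subspace. The only (inessential) difference is bookkeeping: the paper first picks a near-optimal $\covrep$ for $\qr(f)$ and estimates for that single representation, whereas you estimate uniformly in $\covrep$ and take the supremum afterwards — the two maneuvers are equivalent.
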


\begin{proof}
Let $c\in\crosprod$. The contractivity of $(\intform)^\mr$, for $\covrep\in\mathcal R$, yields $\sup_{\covrep \in \mathcal{R}} \norm{(\intform)^\mr (c)}\leq \normr{c}$.

As for the other inequality, let $\eps>0$. Choose $f\in C_c(G,A)$ such that $\normr{c-\qr(f)}<\eps/3$, and next choose $\covrep\in\mathcal R$ such that $\norm{\intform (f)}>\normr{\qr(f)}-\eps/3$. Then
\begin{align*}
\norm{(\intform)^\mr(c)} &\geq \norm{(\intform)^\mr (\qr(f))} - \norm{(\intform)^\mr(c - \qr(f))} \\
&\geq \norm{\intform (f)} - \normr{c-\qr(f)} \\
&\geq \normr{\qr(f)}-\frac{\eps}{3}-\frac{\eps}{3} \\
&\geq \normr{c}-\frac{\eps}{3} - \frac{2\eps}{3}.
\end{align*}
Therefore, $\sup_{\covrep \in \mathcal{R}} \norm{(\intform)^\mr (c)}>\normr{c}-\eps$ for all $\eps>0$, as desired.
\end{proof}

We will now proceed to show that $\qr(C_c(G)\otimes A)$ is dense in $\crosprod$, which will obviously be convenient in later proofs. We start with a lemma which is of some interest in itself.

\begin{lemma}\label{l:quotient_map_continuous_in_inductive_limit_topology_on_C_c(G,A)}
 Let $\dynsys$ be a Banach algebra dynamical system.
\begin{enumerate}
 \item If $\mathcal{R}$ is a non-empty uniformly bounded class of continuous covariant representations, then $\qr:C_c(G,A)\to\crosprod$ is continuous in the inductive limit topology of $C_c(G,A)$. That is, if $(f_i) \in C_c(G,A)$ is a net, eventually supported in a compact set and converging uniformly to $f \in C_c(G,A)$ on $G$, then $\sigmar(f_i -f) \to 0$.
\item If $\covrep$ is a continuous covariant representation, then $\intform$ is continuous in the inductive limit topology. That is, if $(f_i) \in C_c(G,A)$ is a net, eventually supported in a compact set and converging uniformly to $f \in C_c(G,A)$ on $G$, then $\norm{\intform(f_i) - \intform(f)} \to 0$.
\end{enumerate}
\end{lemma}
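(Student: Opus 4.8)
The plan is to reduce both statements to the fundamental estimate \eqref{e:sigmabound} and its uniform version recorded in Remark~\ref{r:seminorm_remark}, exploiting the elementary fact that on functions supported in a fixed compact set the $L^1$-norm is dominated by the supremum norm. The whole argument is essentially the observation that the relevant constants are finite and can be pulled out uniformly along the net.

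First I would dispose of the support bookkeeping. Suppose $(f_i) \in C_c(G,A)$ is eventually supported in a compact set $K$ and converges uniformly to $f$. For $s \notin K$ we have $f_i(s) = 0$ eventually, hence $f(s) = \lim_i f_i(s) = 0$; since $K$ is closed this yields $\supp(f) \subset K$, and therefore $f_i - f$ is eventually supported in $K$ as well. This is what makes it legitimate to invoke a \emph{single} support-dependent estimate uniformly along the tail of the net.

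For part $(i)$, I would then apply the bound from Remark~\ref{r:seminorm_remark}: for any $g \in C_c(G,A)$ supported in $K$,
\[
\sigmar(g) \leq \Cr \Bigl( \sup_{r \in K} \nur(r) \Bigr) \norm{g}_1 .
\]
Here $\Cr < \infty$, and by Definition~\ref{d:uniformly_bounded_class} the function $\nur$ is bounded on compact sets, so $M_K := \sup_{r \in K} \nur(r) < \infty$. Applying this to $g = f_i - f$ once $f_i - f$ is supported in $K$, and estimating
\[
\norm{f_i - f}_1 = \int_K \norm{(f_i - f)(s)} \ds \leq \mu(K) \sup_{s \in G} \norm{(f_i - f)(s)} ,
\]
with $\mu(K) < \infty$ since $K$ is compact, I obtain $\sigmar(f_i - f) \leq \Cr M_K \mu(K) \sup_{s \in G} \norm{(f_i - f)(s)}$, which tends to $0$ by uniform convergence. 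This is precisely the asserted continuity of $\qr$ in the inductive limit topology.

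For part $(ii)$, the argument is identical with the class $\mr$ replaced by the single representation $\covrep$: since $\intform$ is linear, $\intform(f_i) - \intform(f) = \intform(f_i - f)$, and \eqref{e:sigmabound} together with the finiteness of $\sup_{r \in K} \norm{U_r}$ from Lemma~\ref{l:str_cont_compactly_bounded} gives $\norm{\intform(f_i - f)} \to 0$. Equivalently, one may regard part $(ii)$ as the special case of part $(i)$ in which $\mr = \{\covrep\}$, which is uniformly bounded precisely by Lemma~\ref{l:str_cont_compactly_bounded}. There is no real obstacle here; the only points requiring care are the bookkeeping of the three finite constants $\Cr$, $M_K$ and $\mu(K)$, and the pointwise-limit observation that the support of the uniform limit is again contained in $K$, so that a single estimate may be invoked uniformly along the net.
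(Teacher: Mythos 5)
Your proof is correct and takes essentially the same route as the paper's: both arguments come down to the estimate $\sigmar(g) \leq \Cr \bigl(\sup_{r \in K}\nur(r)\bigr)\mu(K)\norm{g}_\infty$ for $g \in C_c(G,A)$ supported in a compact set $K$, applied to $f_i - f$ along the tail of the net, and both deduce part $(ii)$ from part $(i)$ by taking $\mr = \{\covrep\}$. Your explicit observations that $\supp(f) \subset K$ and that a singleton class is uniformly bounded (via Lemma~\ref{l:str_cont_compactly_bounded}) merely make precise what the paper handles implicitly through its reduction to the case $f = 0$.
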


\begin{proof}
\begin{enumerate}
\item It suffices to prove the case when $f = 0$. Let $K$ be a compact set and $i_0$ an index such that $f_i$ is supported in $K$ for all $i\geq i_0$. Let $M$ denote an upper bound for $\nur$ on $K$. Then, for $\covrep \in \mathcal{R}$ and $i\geq i_0$,
\begin{align*}
 \norm{\intform(f_i)} &= \norm{\int_G \pi(f_i(s))U_s \,ds} \\
&\leq\int_K \norm{\pi}\norm{f_i(s)}M\,ds\\
&\leq \Cr M \mu(K) \norm{f_i}_\infty.
\end{align*}
It follows that, for $i\geq i_0$,
\[ \sigmar(f_i) = \sup_{\covrep \in \mathcal{R}} \norm{\intform(f_i)} \leq \Cr M \mu(K) \norm{f_i}_\infty.\]
Hence $\sigmar(f_i) \to 0$.
\item This follows from $(i)$ by taking $\mr = \{ \covrep \}$, since then $\sigmar(f)$ equals $\norm{\intform(f)}$.
\end{enumerate}
\end{proof}

\begin{corol}\label{c:image_of_tensor_product_dense_in_crossed_product}
Let $\dynsys$ be a Banach algebra dynamical system and $\mathcal{R}$ a non-empty uniformly bounded class of continuous covariant representations. Then $\qr(C_c(G) \otimes A)$ is dense in $\crosprod$.
\end{corol}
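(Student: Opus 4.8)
The plan is to combine the two approximation facts already at our disposal: the density of $C_c(G)\otimes A$ in $C_c(G,A)$ in the inductive limit topology, and the continuity of $\qr$ for that topology.

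First I would observe that $\qr(C_c(G,A))$ is dense in $\crosprod$ essentially by construction: the crossed product is the completion of $C_c(G,A)/\ker(\sigmar)$, and $\qr$ maps $C_c(G,A)$ onto the dense subspace $C_c(G,A)/\ker(\sigmar)$. Hence it suffices to show that every element of $\qr(C_c(G,A))$ lies in the closure of $\qr(C_c(G)\otimes A)$.

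To this end, fix $f\in C_c(G,A)$. Applying Lemma~\ref{l:density_lemma_inductive_limit_topology} with $X=A$ and $X_0=A$ (which is trivially dense in itself), I obtain a sequence $(f_n)$ in $C_c(G)\otimes A$, all supported in a fixed compact subset of $G$, which converges uniformly to $f$ on $G$. By Lemma~\ref{l:quotient_map_continuous_in_inductive_limit_topology_on_C_c(G,A)}$(i)$, the map $\qr$ is continuous in the inductive limit topology, so $\sigmar(f_n-f)\to 0$, that is, $\qr(f_n)\to\qr(f)$ in $\crosprod$. Since each $\qr(f_n)$ belongs to $\qr(C_c(G)\otimes A)$, this exhibits $\qr(f)$ as a limit of elements of $\qr(C_c(G)\otimes A)$.

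As $f$ was arbitrary, $\qr(C_c(G,A))$ is contained in the closure of $\qr(C_c(G)\otimes A)$; combined with the density of $\qr(C_c(G,A))$ in $\crosprod$ from the first step, this yields the claim. There is no genuine obstacle here, since the result is a formal consequence of the two cited lemmas; the only point requiring a moment's attention is the legitimate, if trivial, choice $X_0=A$ in the density lemma.
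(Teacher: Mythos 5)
Your proof is correct and follows essentially the same route as the paper: density of $C_c(G)\otimes A$ in $C_c(G,A)$ in the inductive limit topology (the paper cites Williams directly, you invoke Lemma~\ref{l:density_lemma_inductive_limit_topology} with $X_0=A$, which is the same fact), followed by continuity of $\qr$ from Lemma~\ref{l:quotient_map_continuous_in_inductive_limit_topology_on_C_c(G,A)} and the density of $\qr(C_c(G,A))$ in $\crosprod$ by construction. The only difference is that you spell out the sequence argument pointwise, which the paper leaves implicit.
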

\begin{proof}
By \cite[Lemma~1.87]{williams}, $C_c(G) \otimes A$ is dense in $C_c(G,A)$ in the inductive limit topology. The above lemma therefore implies that $\qr(C_c(G)\otimes A)$ is dense in $\qr(C_c(G,A))$. Since the latter is dense in $\crosprod$ by construction, the result follows.
\end{proof}

\section{Approximate identities}\label{sec:approximate_identities}

In this section we are concerned with the existence of a bounded approximate left identity in $\crosprod$. This is a key issue in the formalism, as the existence of a bounded left approximate identity will allow us to apply Theorem~\ref{t:summary_for_centralizers} later on to pass from (non-degenerate) representations of $\crosprod$ to representations of the left centralizer algebra of $\crosprod$, from which, in turn, we will be able to obtain a continuous covariant representation of $\dynsys$.

If $\crosprod$ happens to be a $C^*$-algebra, as is, e.g., the case in \cite{williams}, then the existence of a bounded two-sided approximate identity is of course automatic, but for the general case some effort is needed to show that the existence of a bounded left approximate identity in $A$ implies the similar property for $\crosprod$. For the present paper, we need only a bounded left approximate identity, but we also consider an approximate right identity for completeness and future use.

We need two preparatory results. The first one, Lemma~\ref{l:approximation_on_compacta_involving_alpha_and_approximate_right_identity}, is only relevant for the case of a bounded approximate right identity.

\begin{lemma}\label{l:approximation_on_compacta_involving_alpha_and_approximate_right_identity}
  Let $\dynsys$ be a Banach algebra dynamical system and suppose $A$ has a bounded approximate right identity $(u_i)$. Fix an element $a \in A$ and a compact set $K \subset G$. Then for all $\eps > 0$ we can find an index $i_0$ such that, for all $i \geq i_0$ and $s \in K$,
\[ \|a \alpha_s(u_i) - a\| < \eps. \]
\end{lemma}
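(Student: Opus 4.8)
The plan is to transfer the dependence on $s$ into a fixed compact subset of $A$, and then to invoke the standard fact that a bounded approximate right identity converges uniformly on compact sets. The automorphism property of $\alpha_s$ is what makes this transfer possible.

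First I would rewrite the quantity to be estimated. Writing $M := \sup_i \norm{u_i} < \infty$ for the bound on the net, and using that each $\alpha_s$ is an automorphism with inverse $\alpha_{s^{-1}}$, we have, for every $s \in G$ and every index $i$,
\[
a\alpha_s(u_i) - a = \alpha_s\bigl(\alpha_{s^{-1}}(a)\,u_i\bigr) - \alpha_s\bigl(\alpha_{s^{-1}}(a)\bigr) = \alpha_s\bigl(\alpha_{s^{-1}}(a)\,u_i - \alpha_{s^{-1}}(a)\bigr),
\]
so that $\norm{a\alpha_s(u_i) - a} \leq \norm{\alpha_s}\cdot\norm{\alpha_{s^{-1}}(a)\,u_i - \alpha_{s^{-1}}(a)}$. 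Since $\alpha$ is a strongly continuous representation of $G$ on $A$, Lemma~\ref{l:str_cont_compactly_bounded} provides a constant $M_K > 0$ with $\norm{\alpha_s} \leq M_K$ for all $s \in K$. Hence it suffices to produce an index $i_0$ such that $\norm{\alpha_{s^{-1}}(a)\,u_i - \alpha_{s^{-1}}(a)} < \eps / M_K$ for all $i \geq i_0$ and all $s \in K$.

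Second, I would observe that the set $C := \{\alpha_{s^{-1}}(a) : s \in K\}$ is a compact subset of $A$. Indeed, the map $s \mapsto \alpha_{s^{-1}}(a)$ is the composition of the continuous inversion $s \mapsto s^{-1}$ on $G$ with the continuous map $r \mapsto \alpha_r(a)$ (strong continuity of $\alpha$), so $C$ is the continuous image of the compact set $K$. The remaining task is thus the claim, independent of the group altogether, that the bounded approximate right identity $(u_i)$ converges uniformly on $C$ to the identity, i.e.\ $\sup_{b \in C}\norm{b u_i - b} \to 0$.

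The main obstacle is exactly this uniformity, and it is resolved by total boundedness of $C$ together with the uniform bound $M$ on the net. Given the target tolerance $\eta := \eps/M_K$, I would cover $C$ by finitely many balls $B(c_1,\delta),\dots,B(c_n,\delta)$ with centers $c_k \in C$ and $\delta := \eta/\bigl(2(M+1)\bigr)$, choose for each $k$ an index $i_k$ with $\norm{c_k u_i - c_k} < \eta/2$ for $i \geq i_k$, and take $i_0$ dominating all $i_1,\dots,i_n$ by directedness. Then for any $b \in C$, picking $k$ with $\norm{b - c_k} < \delta$ and using $\norm{u_i} \leq M$, the three-term estimate
\[
\norm{b u_i - b} \leq \norm{b - c_k}\,\norm{u_i} + \norm{c_k u_i - c_k} + \norm{c_k - b} < \delta M + \tfrac{\eta}{2} + \delta \leq \eta
\]
holds for all $i \geq i_0$, which gives the desired uniform bound and completes the argument.
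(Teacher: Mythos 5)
Your proof is correct and takes essentially the same approach as the paper's: both factor $a\alpha_s(u_i) - a = \alpha_s\bigl(\alpha_{s^{-1}}(a)u_i - \alpha_{s^{-1}}(a)\bigr)$, bound $\norm{\alpha_s}$ on $K$ by $M_K$ via Lemma~\ref{l:str_cont_compactly_bounded}, and then run the same finite-cover, three-term estimate with the bound $M$ on the net. The only (cosmetic) difference is that you cover the compact image $\{\alpha_{s^{-1}}(a) : s \in K\} \subset A$ by finitely many balls, whereas the paper equivalently covers $K \subset G$ by finitely many neighbourhoods on which $s \mapsto \alpha_{s^{-1}}(a)$ is nearly constant.
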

\begin{proof}
Let $M \geq 1$ be an upper bound for $(u_i)$. By Lemma~\ref{l:str_cont_compactly_bounded} we can choose an upper bound $M_K>0$ for $\alpha$ on $K$.

By the continuity of $s \mapsto s^{-1} \mapsto \alpha_{s^{-1}}(a) = \alpha_s^{-1}(a)$, for every $s \in K$ there exists a neighbourhood $W_s$ such that, for all $r \in W_s$,
 \[ \|\alpha_r^{-1}(a) - \alpha_s^{-1}(a)\| < \frac{\eps}{3M_K M}. \]
Choose a finite subcover $W_{s_1}, \ldots, W_{s_n}$ of $K$. Then there exists an index $i_0$ such that $i\geq i_0$ implies
\[ \| \alpha_{s_k}^{-1}(a)u_i - \alpha_{s_k}^{-1}(a) \| < \frac{\eps}{3M_K}, \]
for every $1\leq k \leq n$.
For $s \in K$, choose $k$ such that $s \in W_{s_k}$. Then, for all $i \geq i_0$,
\begin{align*}
 \| a\alpha_s(u_i) - a\| &= \norm{\alpha_s(\alpha_s^{-1}(a) u_i - \alpha_s^{-1}(a))} \\
&\leq M_K \left( \|\alpha_s^{-1}(a)u_i - \alpha_{s_k}^{-1}(a)u_i\| + \| \alpha_{s_k}^{-1}(a)u_i - \alpha_{s_k}^{-1}(a) \|  \right. \\
&\phantom{=} \left. + \| \alpha_{s_k}^{-1}(a) - \alpha_s^{-1}(a) \| \right) \\
&< M_K \left( \frac{\eps }{3M_K M}\cdot M + \frac{\eps}{3M_K} + \frac{\eps}{3M_K M} \right) \leq \eps.
\end{align*}
\end{proof}

Actually, the existence of a bounded approximate left (resp.\ right) identity in $\crosprod$ is inferred from the existence of an suitable approximate left (resp.\ right) identity in $C_c(G,A)$ in the inductive limit topology. This is the subject of the following theorem.

\begin{theorem}\label{t:C_c(G,A)_has_approximate_identities_in_inductive_limit_topology}
 Let $\dynsys$ be a Banach algebra dynamical system and let $\mathcal{Z}$ be a neighbourhood basis of $e$ of which all elements are contained in a fixed compact set. For each $V \in \mathcal{Z}$, take a positive $z_V \in C_c(G)$ with support contained in $V$ and integral equal to one. Suppose $(u_i)$ is a bounded approximate left \textup{(}resp.\ right\textup{)} identity of $A$. Then the net $\left(f_{(V,i)}\right)$, where
\[ f_{(V,i)} := z_V \otimes u_i, \]
directed by $(V,i) \leq (W,j)$ if and only if $W \subset V$ and $i \leq j$, is an approximate left \textup{(}resp.\ right\textup{)} identity of $C_c(G,A)$ in the inductive limit topology. In fact, for all $f \in C_c(G,A)$ the net $\left(f_{(V,i)} * f\right)$ \textup{(}resp.\ $\left(f * f_{(V,i)}\right)$\textup{)} is supported in a fixed compact set and converges uniformly to $f$ on $G$.
\end{theorem}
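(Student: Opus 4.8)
The plan is to prove only the stronger ``in fact'' assertion, since uniform convergence together with a fixed common support immediately yields convergence in the inductive limit topology, and hence the approximate-identity property. The support claim is immediate: every $z_V$ is supported in the fixed compact set $Q$ containing all elements of $\mathcal Z$, and $\supp(f*g)\subset\supp(f)\cdot\supp(g)$, so each $f_{(V,i)}*f$ (resp.\ $f*f_{(V,i)}$) is supported in $Q\cdot\supp(f)$ (resp.\ $\supp(f)\cdot Q$), independently of $(V,i)$. For the uniform convergence I would first assemble four auxiliary facts: (a) $f$ is uniformly continuous (Lemma~\ref{l:uniformly_continuous}); (b) $\alpha_r\to\id$ uniformly on the compact set $f(G)\subset A$ as $r\to e$, which follows from the joint continuity of $(r,a)\mapsto\alpha_r(a)$ (Proposition~\ref{p:jointly_continuous}) together with a standard finite-cover argument on $f(G)$; (c) $\alpha$ is bounded on the relevant fixed compact sets by Lemma~\ref{l:str_cont_compactly_bounded}; and (d) a bounded one-sided approximate identity converges uniformly on compact subsets of $A$, again by a finite-cover argument. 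Throughout, $M$ denotes a bound for $(u_i)$, and the product order on $(V,i)$ will make the ``$V$ small'' and ``$i$ large'' choices compatible.

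In the left case I would use $[f_{(V,i)}*f](s)=\int_G z_V(r)\,u_i\,\alpha_r(f(r^{-1}s))\dr$ together with $\int_G z_V=1$ to write
\[
[f_{(V,i)}*f](s)-f(s)=\int_G z_V(r)\bigl(u_i\,\alpha_r(f(r^{-1}s))-f(s)\bigr)\dr,
\]
and split the integrand as $u_i\bigl(\alpha_r(f(r^{-1}s))-f(s)\bigr)+\bigl(u_if(s)-f(s)\bigr)$. On $\supp z_V\subset V$ the first summand is bounded by $M\,\norm{\alpha_r(f(r^{-1}s))-f(s)}$, made small for small $V$ by writing $\alpha_r(f(r^{-1}s))-f(s)=\alpha_r\bigl(f(r^{-1}s)-f(s)\bigr)+(\alpha_r-\id)f(s)$ and applying (a), (b), (c); the second summand is made small uniformly in $s$ by (d), since $(u_i)$ is a \emph{left} approximate identity and $f(s)$ ranges over the compact set $f(G)$. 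Taking the supremum over $s$ gives uniform convergence.

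The right case is the main obstacle, because in $[f*f_{(V,i)}](s)=\int_G f(r)\,z_V(r^{-1}s)\,\alpha_r(u_i)\dr$ the factor $u_i$ now sits \emph{inside} $\alpha_r$, so the previous reasoning fails. Here I would substitute $r=sp^{-1}$ (so $r^{-1}s=p$ runs over $\supp z_V\subset V$); by left invariance of Haar measure and the inversion formula $\int_G g(p^{-1})\,dp=\int_G g(p)\Delta(p^{-1})\,dp$ this turns the integral into $\int_G f(sp^{-1})\,z_V(p)\,\alpha_{sp^{-1}}(u_i)\,\Delta(p^{-1})\,dp$. Since $\Delta(p^{-1})\to1$ as $p\to e$, the modular factor contributes only a controllable error, and after subtracting $f(s)$ (writing $f(s)$ as $\int_G z_V(p)\Delta(p^{-1})f(s)\,dp$ up to the harmless scalar $1-\int_G z_V(p)\Delta(p^{-1})\,dp$) the integrand becomes $f(sp^{-1})\alpha_{sp^{-1}}(u_i)-f(s)$, which I would split as $\bigl(f(sp^{-1})-f(s)\bigr)\alpha_{sp^{-1}}(u_i)+\bigl(f(s)\alpha_{sp^{-1}}(u_i)-f(s)\bigr)$. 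The first summand is handled by (a) and the boundedness of $\alpha_{sp^{-1}}(u_i)$, noting that $sp^{-1}$ stays in a fixed compact set whenever the difference is nonzero (via the support of $f$ and $V\subset Q$); the decisive second summand is exactly of the form treated by Lemma~\ref{l:approximation_on_compacta_involving_alpha_and_approximate_right_identity}. The one extra point is that $f(s)$ varies rather than being a single fixed element, so I would first upgrade that lemma from a single $a$ to the compact set $f(G)$ by a finite-cover argument, after which the estimate is uniform in $s$. The hard part is thus the bookkeeping of this right-handed case: getting the change of variables and the modular factor right, and promoting Lemma~\ref{l:approximation_on_compacta_involving_alpha_and_approximate_right_identity} to hold uniformly over the compact range of values of $f$.
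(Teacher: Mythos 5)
Your proposal is correct, but it is organized differently from the paper's proof, and the difference is worth noting. The paper reduces everything to elementary tensors $z\otimes a\in C_c(G)\otimes A$: there each of your three sources of error (translation of the scalar function, the action $\alpha$, and the approximate identity) acts on a \emph{single fixed} element $a\in A$, so only the pointwise statements (strong continuity of $\alpha$ at $a$, $u_ia\to a$, and Lemma~\ref{l:approximation_on_compacta_involving_alpha_and_approximate_right_identity} for the single element $a$) are needed; the general case is then recovered by a $3\eps$-argument from the uniform density of $C_c(G)\otimes A$ in $C_c(G,A)$ (a weak consequence of \cite[Lemma~1.87]{williams}), using the boundedness of $(u_i)$ and of $\alpha$ on compacta to control $f_{(V,i)}*(f-g)$ and $(f-g)*f_{(V,i)}$. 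You instead attack a general $f\in C_c(G,A)$ directly, which forces you to upgrade each pointwise statement to a statement uniform over the compact set $f(G)$: uniform convergence $\alpha_r\to\id$ on $f(G)$, uniform convergence $u_ia\to a$ on $f(G)$, and the promotion of Lemma~\ref{l:approximation_on_compacta_involving_alpha_and_approximate_right_identity} from a single $a$ to a compact set of $a$'s. All three upgrades are standard finite-cover arguments (using the uniform bound $M$ on $(u_i)$ and Lemma~\ref{l:str_cont_compactly_bounded} for equicontinuity), so they are legitimate; your handling of the right-hand case --- the change of variables, the observation that $sp^{-1}$ stays in a fixed compact set wherever the integrand is nonzero, and the isolation of the modular factor as a harmless scalar error since $\Delta(p^{-1})\to 1$ --- matches the paper's computation in substance. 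What your route buys is a single self-contained estimate with no density step and no separate treatment of elementary tensors; what the paper's route buys is that every analytic ingredient stays pointwise (no compactness upgrades needed), at the cost of the extra approximation layer. Either way the ``in fact'' statement implies the inductive-limit-topology statement exactly as you say, since nets with eventually fixed compact support converging uniformly do converge in that topology.
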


\begin{proof}
Let $K$ be a compact set containing all $V$ in $\mathcal Z$, and assume that $(u_i)$ is bounded by $M>0$.

Since the $f_{(V,i)}$ are all supported in $K$, all $f_{(V,i)} * f$ (resp.\ $f * f_{(V,i)}$) are supported in $K \, \supp(f)$ (resp.\ $\supp(f) \, K$) for each $f\in C_c(G) \otimes A$.

For the approximating property we start with elements of $C_c(G)\otimes A$. For this it is sufficient to consider elementary tensors, so fix $0\neq f = z \otimes a$ with $z \in C_c(G)$ and $a \in A$.

First we consider the approximate left identity. Suppose that $\eps>0$ is given. Let $M_{K}>0$ be an upper bound for $\alpha$ on $K$. Then, for $s\in G$,
\begin{align*}
\norm{[f_{(V,i)} * f](s) - f(s)} &= \norm{\int_G f_{(V,i)}(r) \alpha_r(f(r^{-1}s)) \,dr - f(s)} \\
&= \norm{\int_G z_V(r) z(r^{-1}s)u_i \alpha_r(a) \,dr - z(s)a}  \\
&= \norm{ \int_G z_V(r) z(r^{-1}s)u_i \alpha_r(a) - z_V(r) z(s)a \, dr} \\
&\leq \int_G z_V(r) \norm{z(r^{-1}s)u_i \alpha_r(a) - z(s)a} \,dr  \\
&\leq \int_{\supp(z_V)} z_V(r) \norm{z(r^{-1}s)u_i\alpha_r(a) - z(s)u_i\alpha_r(a)}\,dr\\
& \quad + \int_{\supp(z_V)} z_V(r)\norm{z(s)u_i\alpha_r(a) - z(s)u_i a}\,dr  \\
& \quad + \int_{\supp(z_V)} z_V(r)\norm{z(s)u_i a - z(s)a} \,dr \\
&\leq MM_{K}\norm{a}\int_{\supp(z_V)} z_V(r) |z(r^{-1}s) - z(s)|\,dr \\
& \quad + \Vert z\Vert_\infty M\int_{\supp(z_V)} z_V(r)\norm{\alpha_r(a)-a}\,dr \\
& \quad + \Vert z\Vert_\infty \int_{\supp(z_V)} z_V(r) \norm{u_i a -a}\,dr.
\end{align*}
By the uniform continuity of $z$, there exists a neighbourhood $U_1$ of $e$ such that $|z(r^{-1}s) - z(s)|<\eps/(3MM_{K}\norm{a})$, for all $r\in U_1$ and $s\in G$. Hence, for all $s\in G$, the first term is less than $\eps/3$ as soon as $V\subset U_1$. By the strong continuity of $\alpha$, there exists a neighbourhood $U_2$ of $e$ such that $\norm{\alpha_r(a)-a}<\eps/(3M\Vert z\Vert_\infty)$, for all $r\in U_2$. Hence, for all $s\in G$, the second term is less than $\eps/3$ as soon as $V\subset U_2$. There exists an index $i_0$ such that the third term is less than $\eps / 3$ for all $i\geq i_0$ and all $V\in\mathcal Z$. Choose $V_0\in\mathcal Z$ such that $V_0\subset U_1\cap U_2$. Then, if $(V,i)\geq (V_0,i_0)$, we have $\norm{f_{(V,i)} * f(s) - f(s)}<\eps$ for all $s\in G$ as required.

The approximate right identity is somewhat more involved. Suppose that $\eps>0$ is given. Let $K_1$ be a compact set containing all $V$ in $\mathcal Z$, as well as the supports of all $f*f_{(V,i)}$ and $f$. Let $M_{K_1 K^{-1}}>0$ be an upper bound for $\alpha$ on $K_1 K^{-1}$. Since $f(s) * f_{(V,i)} (s)- f(s)=0$ for $s\notin K_1$, we assume that $s\in K_1$, and then
\begin{align*}
&\norm{ [f * f_{(V,i)}](s) - f(s)} \\
&= \norm{\int_G f(r) \alpha_r(f_{(V,i)}(r^{-1}s)) \,dr - f(s)} \\
&= \norm{\int_G z(r)a z_V(r^{-1}s)\alpha_r(u_i)\,dr - z(s)a} \\
&= \norm{\int_G z(sr) z_V(r^{-1}) a \alpha_{sr}(u_i) \,dr - z(s)a} \\
&= \norm{\int_G \Delta(r^{-1}) z(sr^{-1})z_V(r) a \alpha_{sr^{-1}}(u_i) - z_V(r) z(s)a \, dr} \\
&\leq \int_G z_V(r) \norm{\Delta(r^{-1})z(sr^{-1}) a \alpha_{sr^{-1}}(u_i) - z(s)a} \,dr \\
&\leq \int_{\supp(z_V)} z_V(r)  \norm{\Delta(r^{-1})z(sr^{-1}) a \alpha_{sr^{-1}}(u_i) - z(sr^{-1}) a \alpha_{sr^{-1}}(u_i)}\,dr \\
&\quad + \int_{\supp(z_V)} z_V(r) \norm{z(sr^{-1}) a \alpha_{sr^{-1}}(u_i) - z(s)a \alpha_{sr^{-1}}(u_i)} \,dr \\
&\quad + \int_{\supp(z_V)} z_V(r) \norm{z(s)a \alpha_{sr^{-1}}(u_i) - z(s)a} \,dr\\
&\leq \Vert z\Vert_\infty\norm{a} M_{K_1 K^{-1}} M \int_{\supp(z_V)} z_V(r)  |\Delta(r^{-1})-1|\,dr \\
&\quad + \norm{a} M_{K_1 K^{-1}} M \int_{\supp(z_V)} z_V(r) |z(sr^{-1})-z(s)|\,dr\\
&\quad + \Vert z\Vert_\infty \int_{\supp(z_V)} z_V(r) \norm{a \alpha_{sr^{-1}}(u_i) - a} \,dr.
 \end{align*}
By the continuity of $\Delta$ there exists a neighbourhood $U_1$ of $e$ such that $|\Delta(r^{-1})-1|<\eps/(3\Vert z\Vert_\infty\norm{a} M_{K_1 K^{-1}} M )$, for all $r\in U_1$. Hence, for all $s\in K_1$, the first term is less than $\eps/3$ as soon as $V\subset U_1$.  By the uniform continuity of $z$, there exists neighbourhood $U_2$ of $e$ such that $|z(sr^{-1}) - z(s)|<\eps/(3\norm{a} M_{K K_1^{-1}} M )$, for all $r\in U_2$ and $s\in G$. Hence, for all $s\in K_1$, the second term is less than $\eps/3$ as soon as $V\subset U_2$. An application of Lemma~\ref{l:approximation_on_compacta_involving_alpha_and_approximate_right_identity} to the compact set $K_1 K^{-1}$ shows that there exists an index $i_0$ such that $\norm{a \alpha_{sr^{-1}}(u_i) - a}<\eps/(3\Vert z\Vert_\infty)$, for all $i\geq i_0$, $s\in K_1$ and $r\in K$. Hence, for all $s\in K_1$, the third term is less than $\eps/3$ if $i\geq i_0$. Choose $V_0\in\mathcal Z$ such that $V_0\subset U_1\cap U_2$. Then, if $(V,i)\geq (V_0,i_0)$, we have $\norm{[f * f_{(V,i)}](s) - f(s)} < \eps$ for all $s\in K_1$, and hence for all $s \in G$, as required.

We now pass from $C_c(G)\otimes A$ to $C_c(G,A)$, using that $C_c(G)\otimes A$ is uniformly dense in $C_c(G,A)$ (a rather weak consequence of \cite[Lemma~1.87]{williams}).

We start with the left approximate identity. Let $f\in C_c(G,A)$ and $\eps>0$ be given. For arbitrary $g\in C_c(G,A)$ and $s\in G$ we have
\begin{align*}
&\norm{[f_{(V,i)} * f](s) - f(s)}\\
&\leq \norm{[f_{(V,i)}*(f-g)](s)} + \norm{[f_{(V,i)}*g](s)-g(s)} + \norm{g(s)-f(s)}\\
&=\norm{\int_{G} z_V(r) u_i \alpha_r((f-g)(r^{-1}s))\,dr} + \norm{[f_{(V,i)}*g](s)-g(s)} + \norm{g(s)-f(s)}\\
&\leq M  M_K \Vert f-g\Vert_\infty \int_{\supp(z_V)} z_V(r)\, dr + \norm{[f_{(V,i)}*g](s)-g(s)} + \norm{g - f}_\infty\\
&\leq (M M_K + 1) \Vert f-g\Vert_\infty + \norm{f_{(V,i)} * g -g}_\infty.
\end{align*}
The cited density yields $g\in C_c(G)\otimes A$ such that the first term is less than $\eps/2$. By the first part of the proof, there exists an index $(V_0,i_0)$ such that the second term is less than $\eps/2$ for all $(V,i)\geq (V_0,i_0)$. Therefore $\Vert f_{(V,i)}*f-f\Vert_\infty < \eps$ for all $(V,i)\geq (V_0,i_0)$.

As for the approximate right identity, let $f\in C_c(G,A)$ and $\eps>0$ be given. As above, we let $K_1$ be a compact set containing all $V$ in $\mathcal Z$, as well as the supports of all $f*f_{(V,i)}$ and $f$, and choose an upper bound $M_{K_1 K^{-1}}>0$ for $\alpha$ on $K_1 K^{-1}$. Let $N_{K^{-1}}$ be an upper bound for $\Delta$ on $K^{-1}$. Then, for arbitrary $g\in C_c(G,A)$ and $s\in K_1$, we have
\begin{align*}
&\norm{f * f_{(V,i)}(s)-f(s)}\\
&\leq \norm{[(f-g)*f_{(V,i)}](s)} + \norm{[g*f_{(V,i)}](s)-g(s)} + \norm{g(s)-f(s)}\\
&=\norm{\int_G  (f-g)(r) z_V(r^{-1}s)\alpha_r(u_i)\,dr} + \norm{[g*f_{(V,i)}](s)-g(s)} + \norm{g(s)-f(s)}\\
&=\norm{\int_{\supp(z_V)} \Delta(r^{-1})(f-g)(sr^{-1}) z_V(r)\alpha_{sr^{-1}}(u_i)\,dr} \\
&\quad + \norm{[g*f_{(V,i)}](s)-g(s)} + \norm{g(s)-f(s)}\\
&\leq N_{K^{-1}} \Vert f-g\Vert_\infty M_{K_1K^{-1}}M\int_{\supp(z_V)} z_V(r)\,dr  \\
&\quad+ \norm{[g*f_{(V,i)}](s)-g(s)} + \norm{g - f}_\infty \\
&\leq(N_{K^{-1}} M_{K_1K^{-1}}M + 1) \Vert f-g\Vert_\infty + \Vert g*f_{(V,i)}-g\Vert_\infty.
\end{align*}
As above, there exists an index $(V_0,i_0)$ such that, for all $(V,i)\geq (V_0,i_0)$, $\Vert f * f_{(V,i)}(s) - f(s)\Vert_\infty < \eps$ for all $s\in K_1$. Since this is trivially true for $s\notin K_1$, we are done.
\end{proof}

After these preparations we can now establish that $\crosprod$ has a bounded approximate left identity if $A$ has one. We keep track of the constants rather precisely, since the upper bound for the norms of a bounded approximate left identity of $\crosprod$ will enter the picture naturally when considering the relation between (non-degenerate) continuous covariant representations of $\dynsys$ and (non-degenerate) bounded representations of $\crosprod$ later on, cf.\ Remark~\ref{r:constantdefinitions_estimates} and Section~\ref{sec:correspondences}. Therefore, before we prove the result on the approximate identities, let us introduce the relevant constant.

\begin{defn}\label{d:Nr_definition}
Let $\dynsys$ be a Banach algebra dynamical system and $\mathcal{R}$ a non-empty uniformly bounded class of continuous covariant representations. Let $\nur:G\to [0,\infty)$ be defined as in \eqref{e:nur_definition}.
Let $\mathcal{Z}$ be a neighbourhood basis of $e$ of which all elements are contained in a fixed compact set, and define
\begin{equation}\label{e:Nr_definition}
\Nr= \inf_{V \in \mathcal{Z}} \sup_{r \in V} \nur(r)<\infty.
\end{equation}
\end{defn}
Note that, since all $V$ in $\mathcal Z$ are contained in a fixed compact set, and $\nur$ is bounded on compacta, $\Nr$ is indeed finite. Furthermore, this definition of $\Nr$ does not depend on the choice of $\mathcal Z$. To see this, let $\mathcal Z_1$ and $\mathcal Z_2$ be two neighbourhood bases as in the theorem. For every $V_1\in\mathcal Z_1$, there exists $V_2\in\mathcal Z_2$ such that $V_2\subset V_1$, and then $\sup_{r \in V_2} \nur(r)\leq\sup_{r \in V_1} \nur(r)$. This implies that $\inf_{V \in \mathcal{Z}_2} \sup_{r \in V}\nur(r)\leq \inf_{V \in \mathcal{Z}_1} \sup_{r \in V} \nur(r)$, and the independence of the choice obviously follows.

The constant $\Nr$ can be viewed as $\limsup_{r\to e}\nur(r)$.

\begin{theorem}\label{t:crossed_product_has_bounded_approximate_identities}
Let $\dynsys$ be a Banach algebra dynamical system, where $A$ has an $M$-bounded approximate left \textup{(}resp.\ right\textup{)} identity $(u_i)$, and let $\mathcal{R}$ be a non-empty uniformly bounded class of continuous covariant representations. Let $\eps > 0$, and choose a neighbourhood $V_0$ of $e$ with compact closure such that
\[
\Nr \leq \sup_{r \in V_0} \nur(r) \leq \Nr + \eps.
\]
Let $\mathcal{Z}$ be a neighbourhood basis of $e$ of which all elements are contained in $V_0$. For each $V \in \mathcal{Z}$, let $z_V \in C_c(G)$ be a positive function with support contained in $V$ and integral equal to one. Define $f_{(V,i)} := z_V \otimes u_i$, for each $V\in\mathcal Z$ and each index $i$.

Then the associated net $\left(\qr(f_{(V,i)})\right)$ as above is a $\Cr M(\Nr+\eps)$-bounded approximate left \textup{(}resp.\ right\textup{)} identity of $\crosprod$.

If $V_0$ satisfies $N^\mr = \sup_{r \in V_0} \nur(r)$, then $\left(\qr(f_{(V,i)})\right)$ is $\Cr M \Nr$-bounded.
\end{theorem}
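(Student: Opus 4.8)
The plan is to assemble this result from two ingredients already in hand: the inductive-limit approximate identity in $C_c(G,A)$ from Theorem~\ref{t:C_c(G,A)_has_approximate_identities_in_inductive_limit_topology}, and a direct norm estimate for $\sigmar$ on the elementary tensors $f_{(V,i)} = z_V\otimes u_i$. I would establish the boundedness of the net first, since the uniform bound is what makes the subsequent density argument work.

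For the boundedness, I would compute the integrated form directly. As $f_{(V,i)}(s) = z_V(s)u_i$, pulling the operator $\pi(u_i)$ through the integral by \eqref{d:operators_through_integral} gives, for every $\covrep\in\mr$,
\[
\intform(f_{(V,i)}) = \pi(u_i)\int_G z_V(s)U_s\ds.
\]
Using \eqref{d:norm_through_integral}, the estimates $\norm{\pi(u_i)}\leq\Cr M$ and $\norm{U_s}\leq\nur(s)$, and the fact that $z_V$ is positive with integral one and support contained in $V\subset V_0$, I obtain
\[
\norm{\intform(f_{(V,i)})} \leq \Cr M\int_G z_V(s)\nur(s)\ds \leq \Cr M\sup_{r\in V_0}\nur(r) \leq \Cr M(\Nr+\eps).
\]
Taking the supremum over $\covrep\in\mr$ yields $\normr{\qr(f_{(V,i)})} = \sigmar(f_{(V,i)}) \leq \Cr M(\Nr+\eps)$. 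The final assertion of the theorem is then immediate: if $V_0$ can be chosen with $\Nr=\sup_{r\in V_0}\nur(r)$, the same chain of inequalities gives the bound $\Cr M\Nr$.

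For the approximate identity property I would first treat the dense subalgebra $\qr(C_c(G,A))$. Fix $f\in C_c(G,A)$. By Theorem~\ref{t:C_c(G,A)_has_approximate_identities_in_inductive_limit_topology}, the net $(f_{(V,i)}*f)$ (resp.\ $(f*f_{(V,i)})$) is eventually supported in a fixed compact set and converges uniformly to $f$, hence converges to $f$ in the inductive limit topology of $C_c(G,A)$. Since $\qr$ is continuous in this topology by Lemma~\ref{l:quotient_map_continuous_in_inductive_limit_topology_on_C_c(G,A)} and is an algebra homomorphism, it follows that $\qr(f_{(V,i)})*\qr(f)\to\qr(f)$ (resp.\ $\qr(f)*\qr(f_{(V,i)})\to\qr(f)$) in $\crosprod$.

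Finally I would extend to an arbitrary $c\in\crosprod$ by a standard $\delta/2$-argument, using the density of $\qr(C_c(G,A))$ and the uniform bound $B:=\Cr M(\Nr+\eps)$ just obtained. Given $c$ and $\delta>0$, choose $f\in C_c(G,A)$ with $(B+1)\normr{c-\qr(f)}<\delta/2$; then
\[
\normr{\qr(f_{(V,i)})*c-c} \leq (B+1)\normr{c-\qr(f)} + \normr{\qr(f_{(V,i)})*\qr(f)-\qr(f)},
\]
and the second term is eventually below $\delta/2$ by the previous paragraph, giving the desired convergence, with the right-hand case being identical. The genuine analytic content sits entirely in Theorem~\ref{t:C_c(G,A)_has_approximate_identities_in_inductive_limit_topology}, so I do not expect a serious obstacle here; the only point requiring care is the bookkeeping order, namely securing the uniform bound on the net \emph{before} running the density argument that relies on it.
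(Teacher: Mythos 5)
Your proposal is correct and follows essentially the same route as the paper's proof: the same direct estimate of $\norm{\intform(f_{(V,i)})}$ via \eqref{d:norm_through_integral} and the bounds $\norm{\pi}\leq\Cr$, $\norm{u_i}\leq M$, $\norm{U_s}\leq\nur(s)\leq\sup_{r\in V_0}\nur(r)$, followed by Theorem~\ref{t:C_c(G,A)_has_approximate_identities_in_inductive_limit_topology} and Lemma~\ref{l:quotient_map_continuous_in_inductive_limit_topology_on_C_c(G,A)} on the dense subalgebra $\qr(C_c(G,A))$, and a density argument using the uniform bound. The only difference is cosmetic: the paper leaves the last step as ``an easy $3\eps$-argument,'' which you spell out explicitly (and correctly, including securing the bound before invoking it).
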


\begin{proof}
We will prove the left version, the right version is similar.

If $V \in \mathcal Z$ and $i$ is an index then we find that, for $\covrep \in \mr$,
\begin{align}\label{e:norm_computation_approximate_identity}
\norm{\intform(f_{(V,i)})} &= \norm{\int_{V_0} z_V(s) \pi(u_i) U_s \,ds} \\
&\leq \norm{\pi} \norm{u_i} \sup_{r \in V} \norm{U_r} \int_{V} z_V(s)\,ds \notag\\
&\leq \Cr  M \sup_{r\in V_0}\nur(r).\notag
\end{align}
Hence $\sigmar(\qr(f_{(V,i)}))=\sigmar(f_{(V,i)}) \leq \Cr  M \sup_{r\in V_0}\nur(r)\leq \Cr M(\Nr+\eps)$, as desired. To show that $\left(\qr(f_{(V,i)})\right)$ is actually an approximate left identity, we start by noting that, according to Theorem~\ref{t:C_c(G,A)_has_approximate_identities_in_inductive_limit_topology}, $f_{(V,i)} * f \to f$ in the inductive limit topology on $C_c(G,A)$, for all $f \in C_c(G,A)$. Therefore, Lemma~\ref{l:quotient_map_continuous_in_inductive_limit_topology_on_C_c(G,A)} implies that $\qr(f_{(V,i)}) * \qr(f) \to \qr(f)$, for all $f\in C_c(G,A)$. Since we have already established that $\left(\qr(f_{(V,i)})\right)$ is uniformly bounded in $\crosprod$, an easy $3\eps$-argument shows that the net is indeed a left approximate identity of $\crosprod$.

As for the second part, if $V_0$ and $\mathcal Z$ are as indicated and $V\subset V_0$ is in $\mathcal Z$, then a computation as in \eqref{e:norm_computation_approximate_identity} yields
\[ \norm{\intform(f_{(V,i)})} \leq \Cr  M \sup_{r\in V_0}\nur(r)\leq \Cr M(\Nr+\eps), \]
hence $\sigmar(\qr(f_{(V,i)}))\leq  \Cr  M (\Nr+\eps)$. This computation with $\eps = 0$ shows the final remark of the theorem.
\end{proof}

For convenience we introduce the following notation.

\begin{defn}\label{d:infimum_of_bounds}
 Let $\mathcal{A}$ be a normed algebra with a bounded approximate left (resp.\ right) identity. Then $M_l^\mathcal{A}$ (resp.\ $M_r^\mathcal{A}$) denotes the infimum of the upper bounds of all approximate left (resp.\ right) identities. If $\dynsys$ is a Banach algebra dynamical system, with $A$ having a bounded left (resp.\ right) approximate identity, and $\mr$ is a non-empty uniformly bounded class of continuous covariant representations, then we will write $M_l^\mr$ (resp.\ $M_r^\mr$), rather than $M_l^{\crosprod}$ $\left( \textup{resp.\ }M_r^{\crosprod} \right)$.
\end{defn}

\begin{corol}\label{c:approximate_identities}
Let $\dynsys$ be a Banach algebra dynamical system, where $A$ has a bounded approximate left \textup{(}resp.\ right\textup{)} identity $(u_i)$, and let $\mathcal{R}$ be a non-empty uniformly bounded class of continuous covariant representations. Then $\crosprod$ has a bounded approximate left \textup{(}resp.\ right\textup{)} identity, and
\begin{align*}
 M_l^\mr \leq \Cr M_l^A \Nr, \\
 M_r^\mr \leq \Cr M_r^A \Nr. \\
\end{align*}
\end{corol}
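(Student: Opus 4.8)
The statement is an immediate consequence of Theorem~\ref{t:crossed_product_has_bounded_approximate_identities} combined with the definitions of $M_l^A$ and $M_l^\mr$ from Definition~\ref{d:infimum_of_bounds}; there is no genuinely hard step, and the only point requiring care is the correct order of the two limits. I will treat the left case, the right case being identical.

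The plan is first to dispose of existence. Since $A$ has a bounded approximate left identity by hypothesis, fix any such identity $(u_i)$, with bound $M$, say. Applying Theorem~\ref{t:crossed_product_has_bounded_approximate_identities} with this $(u_i)$ and any choice of $\eps>0$ and $V_0$ produces a net $\left(\qr(f_{(V,i)})\right)$ that is a bounded approximate left identity of $\crosprod$. Hence $\crosprod$ does have a bounded approximate left identity, and $M_l^\mr$ is well defined.

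For the estimate I would run a two-parameter limiting argument. Let $(u_i)$ be an arbitrary $M$-bounded approximate left identity of $A$ and let $\eps>0$. Theorem~\ref{t:crossed_product_has_bounded_approximate_identities} asserts that the associated net is a $\Cr M(\Nr+\eps)$-bounded approximate left identity of $\crosprod$, so by the definition of $M_l^\mr$ as the infimum of the bounds of all approximate left identities of $\crosprod$ we get
\[
M_l^\mr \leq \Cr M(\Nr+\eps).
\]
Since $\eps>0$ was arbitrary, letting $\eps\downarrow 0$ yields $M_l^\mr \leq \Cr M \Nr$. This inequality holds for the bound $M$ of \emph{every} approximate left identity of $A$. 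Invoking now the definition of $M_l^A$ as the infimum of these bounds, and taking the infimum over $M$ (equivalently, choosing for each $\delta>0$ an approximate left identity of $A$ with bound $M\leq M_l^A+\delta$ and letting $\delta\downarrow 0$), we conclude
\[
M_l^\mr \leq \Cr M_l^A \Nr,
\]
as required.

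The only subtlety, and thus the step I would be most careful about, is making sure the two infima are taken in the legitimate order and against the correct definitions: for a \emph{fixed} approximate left identity of $A$ one first lets $\eps\downarrow 0$ (removing the slack in the choice of $V_0$ built into Theorem~\ref{t:crossed_product_has_bounded_approximate_identities}), and only afterwards does one pass to the infimum over the bounds $M$ of approximate left identities of $A$. Both passages are harmless since each intermediate object is an honest approximate left identity of $\crosprod$, so the bound on $M_l^\mr$ applies at every stage. The right-handed statement $M_r^\mr \leq \Cr M_r^A \Nr$ follows verbatim from the right version of Theorem~\ref{t:crossed_product_has_bounded_approximate_identities}.
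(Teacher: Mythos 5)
Your proof is correct and is essentially the paper's own argument: both rest entirely on Theorem~\ref{t:crossed_product_has_bounded_approximate_identities} together with the infimum definitions in Definition~\ref{d:infimum_of_bounds}, the only (immaterial) difference being that the paper uses a single $\eps$ for both slack terms at once, obtaining $M_l^\mr \leq \Cr(M_l^A+\eps)(\Nr+\eps)$ and letting $\eps\downarrow 0$, while you take the two limits successively.
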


\begin{proof}
 We prove the left version, the right version being similar. If $\eps > 0$, then $A$ has an $M_l^A + \eps$ approximate left identity, so by the above theorem $\crosprod$ has a $\Cr (M_l^A + \eps) (\Nr + \eps)$-bounded approximate left identity, and the result follows.
\end{proof}

\section{Representations: from $\dynsys$ to $\crosprod$}\label{sec:from_dynsys_to_crosprod}

Our principal interest lies in the relation between a non-empty uniformly bounded class $\mr$ of continuous covariant representations of a Banach algebra dynamical system $\dynsys$, and the bounded representations of the associated crossed product $\crosprod$. In this section, we study the easiest part of this relation, which is concerned with passing from suitable continuous covariant representations of $\dynsys$ to $\sigmar$-bounded representations of $C_c(G,A)$, and subsequently to bounded representations of $\crosprod$. The other way round, i.e., passing from bounded representations of $\crosprod$ to continuous covariant representations of $\dynsys$, is more involved, and will be taken up in Section~\ref{sec:from_crosprod_to_dynsys}, after the preparatory Section~\ref{sec:centralizer_algebras}. At that point, non-degeneracy of representations will become essential, but for the present section this is not necessary yet.

Above, we wrote ``suitable'' representations, because there are more continuous covariant representations yielding bounded representations of the crossed product, than just those used to construct that crossed product (which yield contractive ones). The relevant terminology is introduced in the following definition.

\begin{defn}\label{d:affiliated}
Let $\dynsys$ be a Banach algebra dynamical system, and let $\mr$ be a non-empty uniformly bounded class of continuous covariant representations. A covariant representation $\covrep$ of $\dynsys$ in a Banach space $X$ is called \emph{$\mr$-continuous}, if it is continuous, and the homomorphism
\[
\intform: C_c(G,A)\to B(X)
\]
is $\sigmar$-bounded.
\end{defn}

\begin{remark}\label{r:affiliated_is_bounded}
 It is clear that a continuous covariant representation $\covrep$ of $\dynsys$ is $\mr$-continuous if and only if $\intform$ is continuous as an operator from the space $C_c(G,A)$, equipped with the topology induced by the seminorm $\sigmar$, to $B(X)$, equipped with the norm topology.
\end{remark}

By Lemma~\ref{l:completion_associated_with_seminorm}, an $\mr$-continuous covariant representation yields a bounded representation of the Banach algebra $\crosprod$, determined by \eqref{e:Tr_definition}, which gives \eqref{e:intformr_definition} again:
\begin{equation}
(\intform)^\mr(\qr(f))=\intform(f),
\end{equation}
for $f\in C_c(G,A)$. Then $\Vert \intform \Vert=\Vert \intform\Vert^\mr$. If, in addition, $\dynsys$, $\mr$ and $\covrep$ are involutive, then $(\intform)^\mr$ is an involutive representation of the $C^*$-algebra $\crosprod$. Of course, returning to the not necessarily involutive case, the continuous covariant representations in $\mr$ are certainly $\mr$-continuous, and the corresponding representations of $\crosprod$ are contractive.

Later, in Proposition~\ref{p:from_crossed_product_to_dyn_sys}, we will be able to show that, if $\mr$ is a non-empty uniformly bounded class of continuous covariant representations and if $A$ has a bounded left approximate identity, the assignment $\covrep \mapsto (\intform)^\mr$ is injective on the non-degenerate $\mr$-continuous covariant representations. For the moment we are interested in the preservation of non-degeneracy, the set of closed invariant subspaces and the Banach space of intertwining operators under this map. As a first step, we consider these issues for the assignment $\covrep \mapsto \intform$, for a still arbitrary continuous covariant representation $\covrep$. In order to do this, we introduce four maps which will be very useful later on as well. Two of these ($j_A$ and $j_G$ below) will only be needed in the involutive case.

\begin{prop}\label{p:i_j_A_G_properties}
 Let $\dynsys$ be a Banach algebra dynamical system. Define maps $i_A, j_A: A \to \End(C_c(G,A))$ and $i_G, j_G: G \to \End(C_c(G,A))$ by
\begin{align}\label{e:i_definitions}
 [i_A(a)f](s) &:= af(s),  \quad \quad \quad \quad [j_A(a)f](s) := f(s) \alpha_s(a), \\
 [i_G(r)f](s) &:= \alpha_r(f(r^{-1}s)), \quad \; [j_G(r)f](s) := \Delta(r^{-1}) f(sr^{-1}). \notag
\end{align}
for $a \in A$, $r \in G$ and $f \in C_c(G,A)$. Then $i_A$ and $i_G$ and homomorphisms and $j_A$ and $j_G$ are anti-homomorphisms. If $\covrep$ is a continuous covariant representation of $\dynsys$, then, for $a \in A$, $r \in G$, and $f \in C_c(G,A)$,
\begin{align}\label{e:intforms_of_i_j_A_G}
 \intform(i_A(a)f) &= \pi(a) \circ \intform(f), \quad \, \intform(j_A(a)f) = \intform(f) \circ \pi(a), \\
\intform(i_G(r)f) &= U_r \circ \intform(f), \quad \quad \intform(j_G(r)f) = \intform(f) \circ U_r . \notag
\end{align}
\end{prop}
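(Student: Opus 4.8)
The plan is to verify the four claims in two stages: first the (anti-)homomorphism properties of $i_A,i_G,j_A,j_G$, and then the four intertwining formulas in \eqref{e:intforms_of_i_j_A_G}, the latter being the real point of the proposition. For the algebraic structure, I would simply compute the composites directly from the defining formulas \eqref{e:i_definitions}. For instance, for $i_A$ one has $[i_A(a)i_A(b)f](s)=a\,b\,f(s)=[i_A(ab)f](s)$, so $i_A$ is a homomorphism; for $j_A$, using that $\alpha_s$ is multiplicative, $[j_A(a)j_A(b)f](s)=[j_A(b)f](s)\alpha_s(a)=f(s)\alpha_s(b)\alpha_s(a)=f(s)\alpha_s(ba)=[j_A(ba)f](s)$, so $j_A$ reverses order and is an anti-homomorphism. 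The analogous computations for $i_G$ (using the cocycle-free relation $\alpha_r\alpha_{r'}=\alpha_{rr'}$) and $j_G$ (using multiplicativity of $\Delta$ and the substitution in the argument $sr^{-1}$) are routine bookkeeping with the group action and modular function.

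The heart of the matter is \eqref{e:intforms_of_i_j_A_G}, and here the main obstacle is organizing the integral manipulations cleanly, especially pulling the fixed operators $\pi(a)$ and $U_r$ through the vector-valued integral and, in the two right-multiplication cases, performing a change of variables in $G$ that correctly produces the modular factor. The two left-hand identities are the easier pair. For $i_A$, I would write
\[
\intform(i_A(a)f)=\int_G \pi([i_A(a)f](s))U_s\ds=\int_G \pi(a)\pi(f(s))U_s\ds,
\]
and then pull the bounded operator $\pi(a)$ out of the integral using \eqref{d:operators_through_integral}, obtaining $\pi(a)\circ\intform(f)$. For $i_G$, I would use the covariance relation $\pi(\alpha_r(b))=U_r\pi(b)U_r^{-1}$ to rewrite $\pi(\alpha_r(f(r^{-1}s)))U_s=U_r\pi(f(r^{-1}s))U_{r^{-1}s}$, and then recognize, after the substitution $t=r^{-1}s$ (which leaves $\ds$ invariant since $\mu$ is a left Haar measure), that the integral collapses to $U_r\int_G\pi(f(t))U_t\,dt=U_r\circ\intform(f)$.

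The two right-hand identities require more care. For $j_A$, starting from $\intform(j_A(a)f)=\int_G\pi(f(s)\alpha_s(a))U_s\ds$, I would again invoke covariance in the form $\pi(\alpha_s(a))=U_s\pi(a)U_s^{-1}$ to get the integrand $\pi(f(s))U_s\pi(a)U_s^{-1}U_s=\pi(f(s))U_s\pi(a)$, and then pull the fixed operator $\pi(a)$ out on the \emph{right} via \eqref{d:operators_through_integral}, yielding $\intform(f)\circ\pi(a)$. For $j_G$, the integrand is $\Delta(r^{-1})\pi(f(sr^{-1}))U_s$; the substitution $t=sr^{-1}$, i.e.\ $s=tr$, combined with the right-invariance identity $\Delta(r)\int_G\psi(sr)\ds=\int_G\psi(s)\ds$ recorded in the preliminaries, converts the factor $\Delta(r^{-1})$ and the shifted argument into $\int_G\pi(f(t))U_{tr}\,dt=\int_G\pi(f(t))U_t\,dt\,U_r=\intform(f)\circ U_r$, where $U_{tr}=U_tU_r$ lets the constant $U_r$ be pulled out on the right. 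Throughout, the justification for interchanging the fixed operators with the integral is precisely \eqref{d:operators_through_integral}, and the well-definedness of all integrands as elements of $C_c(G,A)$ follows from Lemma~\ref{l:map_continuous} together with the strong continuity of the operator-valued integrands already noted in the construction of $\intform$.
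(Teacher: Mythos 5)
Your proposal is correct and takes essentially the same route as the paper's proof: direct computation using the covariance identity, pulling the fixed operators $\pi(a)$ and $U_r$ through the vector-valued integral via \eqref{d:operators_through_integral}, and the left-invariance and modular-function identities for the Haar measure, with the (anti-)homomorphism properties checked by routine pointwise computation. The only difference is one of completeness, not of method: the paper writes out just the $j_A$ and $i_G$ cases and declares the others similar, whereas you also carry out the $i_A$ and $j_G$ cases (the latter with the correct cancellation of the factor $\Delta(r^{-1})$ against the Jacobian of the substitution $s \mapsto sr$).
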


It is actually true that $i_A$ and $i_G$ map into the left centralizers of $C_c(G,A)$ and that $j_A$ and $j_G$ map into the right centralizers of $C_c(G,A)$. The former will be shown during the proof of Proposition~\ref{p:covariant_representation_on_crossed_product} and the proof of the latter is similar, cf.\ Proposition~\ref{p:non_degenerate_anti_covariant_anti_representation_on_crossed_product}.

\begin{proof}
It is easy to check that the maps are (anti-)homomorphisms. Let $a \in A$ and $f \in C_c(G,A)$ and let $\covrep$ be a continuous covariant representation, then using the covariance,
\begin{align*}
 \intform(j_A(a)f) &= \int_G \pi[(j_A(a)f)(s)] U_s \ds \\
&= \int_G \pi(f(s)) \pi(\alpha_s(a)) U_s \ds \\
&= \int_G \pi(f(s)) U_s \pi(a) \ds \\
&= \intform(f) \circ \pi(a),
\end{align*}
and for $r \in G$ we obtain
\begin{align*}
 \intform(i_G(r)f) &= \int_G \pi[(i_G(r)f)(s)] U_s \ds \\
&= \int_G \pi[ \alpha_r(f(r^{-1}s)) ] U_s \ds \\
&= \int_G \pi[ \alpha_r(f(s)) ] U_r U_s \ds \\
&= \int_G U_r \pi(f(s)) U_s \ds \\
&= U_r \circ \intform(f).
\end{align*}
The other computations are similar and will be omitted.
\end{proof}

Before we continue we need a preparatory result, in which the version for $j_A$ will not be applied immediately, but will be useful later on.

\begin{lemma}\label{l:i_j_A_continuous_in_inductive}
 Let $\dynsys$ be a Banach algebra dynamical system with a bounded approximate left \textup{(}resp.\ right\textup{)} identity $(u_i)$, and let $f \in C_c(G,A))$. Then $i_A(u_i)f$ \textup{(}resp.\ $j_A(u_i)f$\textup{)} converges to $f$ in the inductive limit topology of $C_c(G,A)$.
\end{lemma}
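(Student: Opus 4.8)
The plan is to check directly the two conditions that guarantee convergence in the inductive limit topology of $C_c(G,A)$, as described in the remark following Lemma~\ref{l:uniformly_continuous}: that the net is eventually supported in a fixed compact set, and that it converges uniformly on $G$. The support condition is immediate in both cases, and so presents no work: since $[i_A(u_i)f](s)=u_i f(s)$ and $[j_A(u_i)f](s)=f(s)\alpha_s(u_i)$, each of these functions vanishes wherever $f$ does, so all of them, and hence their differences with $f$, are supported in the fixed compact set $K:=\supp(f)$. Everything therefore reduces to uniform convergence, and, the supports all lying in $K$, it suffices to prove $\sup_{s\in K}\norm{[i_A(u_i)f](s)-f(s)}\to 0$ (resp.\ the analogous statement for $j_A$).

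For the left version, the observation I would exploit is that $f(K)$ is a compact subset of $A$, since $f$ is continuous and $K$ is compact. Fixing $\eps>0$ and a bound $M$ for $(u_i)$, I would cover $f(K)$ by finitely many balls $B(a_1,\delta),\dots,B(a_n,\delta)$, and use that $(u_i)$, being a left approximate identity, satisfies $\norm{u_i a_k-a_k}<\eps/2$ for all $i$ beyond some index $i_0$ and all $k$. For $s\in K$, choosing $k$ with $\norm{f(s)-a_k}<\delta$, the estimate to invoke is
\[
\norm{u_i f(s)-f(s)}\leq \norm{u_i}\norm{f(s)-a_k}+\norm{u_i a_k-a_k}+\norm{a_k-f(s)}< (M+1)\delta+\tfrac{\eps}{2},
\]
so that taking $\delta<\eps/(2(M+1))$ forces the right-hand side below $\eps$, uniformly in $s\in K$, for all $i\geq i_0$. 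This is just the standard fact that a bounded approximate identity converges uniformly on compacta, and it is the entire content of the left-hand assertion.

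The right version is where the only genuine difficulty lies, because $[j_A(u_i)f](s)=f(s)\alpha_s(u_i)$ carries an additional $s$-dependence through $\alpha_s$; this is precisely the obstacle I expect, and it is exactly what the preparatory Lemma~\ref{l:approximation_on_compacta_involving_alpha_and_approximate_right_identity} was established to handle. The plan is to run the same finite-cover argument while absorbing the $s$-dependence via that lemma. Using Lemma~\ref{l:str_cont_compactly_bounded} applied to the strongly continuous representation $\alpha$, I would first fix an upper bound $M_K$ for $\norm{\alpha_s}$ on $K$, let $M$ bound $(u_i)$, and cover the compact set $f(K)$ by balls $B(a_1,\delta),\dots,B(a_n,\delta)$. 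For $s\in K$ and $k$ chosen with $\norm{f(s)-a_k}<\delta$, the estimate becomes
\[
\norm{f(s)\alpha_s(u_i)-f(s)}\leq \norm{f(s)-a_k}\norm{\alpha_s(u_i)}+\norm{a_k\alpha_s(u_i)-a_k}+\norm{a_k-f(s)},
\]
whose outer two terms are at most $\delta M_K M+\delta$, hence below $\eps/2$ once $\delta$ is small. The middle term is where Lemma~\ref{l:approximation_on_compacta_involving_alpha_and_approximate_right_identity} does the work: applied to each of the finitely many centres $a_k$ and to the compact set $K$, it yields a single index $i_0$ with $\norm{a_k\alpha_s(u_i)-a_k}<\eps/2$ for all $i\geq i_0$, all $s\in K$, and all $k$. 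Combining these would give $\sup_{s\in K}\norm{[j_A(u_i)f](s)-f(s)}<\eps$ for $i\geq i_0$, establishing uniform convergence and thereby the right-hand statement.
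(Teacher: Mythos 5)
Your proof is correct, but it takes a genuinely different route from the paper. The paper first reduces to elementary tensors $f=z\otimes a\in C_c(G)\otimes A$ via Lemma~\ref{l:density_lemma_inductive_limit_topology} (for the right version this reduction also needs the uniform boundedness of the operators $\alpha_s$ for $s$ in a compact set, to push the approximation through $j_A(u_i)$); after that reduction the left case is immediate, since $[i_A(u_i)(z\otimes a)](s)=z(s)u_ia\to z(s)a$ uniformly, and the right case is a single application of Lemma~\ref{l:approximation_on_compacta_involving_alpha_and_approximate_right_identity} to the one element $a$ and the compact set $\supp(z)$. You instead work with a general $f$ directly, exploiting compactness of $f(\supp(f))$ in $A$ and a finite $\delta$-net: your left case amounts to proving the standard fact that a bounded left approximate identity converges uniformly on compact subsets of $A$, and your right case runs the same net argument with Lemma~\ref{l:approximation_on_compacta_involving_alpha_and_approximate_right_identity} applied to the finitely many net centres $a_k$ (taking a common index by finiteness). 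Both arguments lean on the same preparatory lemma for the $j_A$ case, so you correctly identified where the real difficulty sits. What your approach buys is self-containedness: it avoids the density of $C_c(G)\otimes A$ in $C_c(G,A)$ (a nontrivial cited result) and the attendant $3\eps$/operator-bound bookkeeping. What the paper's approach buys is economy and uniformity of method: the tensor reduction makes the pointwise computations trivial and reuses a lemma that the paper invokes repeatedly elsewhere (e.g.\ in Theorem~\ref{t:C_c(G,A)_has_approximate_identities_in_inductive_limit_topology}). One cosmetic remark: in your write-up you cover $f(K)$ by $\delta$-balls before fixing the final value of $\delta$; logically one should fix $\delta$ (depending on $\eps$, $M$, $M_K$) first and then choose the finite cover and the index $i_0$, but this is an ordering of exposition, not a gap.
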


\begin{proof}
Starting with the left version, we note that Lemma~\ref{l:density_lemma_inductive_limit_topology} implies easily that it is sufficient to prove the statement for elementary tensors. So let $f = z \otimes a \in C_c(G) \otimes A$. Then $[i_A(u_i)f](s) = z(s) u_i a$, which converges uniformly to $z(s) a = f(s)$ on $G$.

As to the right version, again Lemma~\ref{l:density_lemma_inductive_limit_topology}, when combined with the observation that the operators $\alpha_s\in B(A)$ are uniformly bounded as $s$ ranges over a compact subset of $G$, implies that it is sufficient to prove the statement for $f = z \otimes a \in C_c(G) \otimes A$. Let $\eps > 0$. Then, for $s\in G$, $\norm{[j_A(u_i)f](s)-f(s)}=\norm{z(s)a\alpha_s(u_i)-z(s)a}\leq |z(s)|\norm{a\alpha_s(u_i)-a}$. For $s\notin\supp(z)$, the right hand side is zero, for all $i$. Lemma~\ref{l:approximation_on_compacta_involving_alpha_and_approximate_right_identity} shows that there exists an index $i_0$ such that, for all $i\geq i_0$, the right hand side is less than $\eps$, for all $s\in \supp(z)$, and $i\geq i_0$. Hence $\Vert j_A(u_i)f-f\Vert_\infty\to 0$. Therefore, $j_A(u_i)f \to f$ in the inductive limit topology of $C_c(G,A)$.
\end{proof}

\begin{prop}\label{p:integrated_form_is_non_degenerate_rep_of_C_c(G,A)}
 Let $\dynsys$ be a Banach algebra dynamical system and let $\covrep$ be a continuous covariant representation of $\dynsys$ on the Banach space $X$.
\begin{enumerate}
 \item If $\covrep$ is non-degenerate, then $\intform$ is a non-degenerate representation of $C_c(G,A)$. If $A$ has a bounded approximate left identity, the converse also holds.
 \item If $Y$ is a closed subspace of $X$ which is invariant for both $\pi$ and $U$, then $Y$ is invariant for $\intform$.
 \item If $Y$ is a Banach space, $(\rho, V)$ a continuous covariant representation on $Y$ and $\Phi: X \to Y$ a bounded intertwining operator between $\covrep$ and $(\rho,V)$, then $\Phi$ is a bounded intertwining operator between $\intform$ and $\rho \rt V$. If $\covrep$ is non-degenerate, the converse also holds.
 \item If $\dynsys$ and $\covrep$ are involutive, then so is $\intform$.
\end{enumerate}
\end{prop}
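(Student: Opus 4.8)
The plan is to handle the four parts in turn, in each case reducing every assertion to the defining integral $\intform(f)=\int_G\pi(f(s))U_s\ds$ and invoking two standard facts: a bounded operator may be pulled through this integral (Remark~\ref{r:vector_valued_integration}), and, by a Hahn--Banach argument based on the characterization \eqref{d:characterization_of_integral}, the integral of a continuous compactly supported function taking values in a \emph{closed} subspace again lies in that subspace.

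For the forward implication of $(i)$ I would smooth in the group variable. Fixing $a\in A$ and $x\in X$, and choosing positive $z_V\in C_c(G)$ supported in a shrinking neighbourhood $V$ of $e$ with $\int_G z_V=1$, I compute
\[
\intform(z_V\otimes a)x=\pi(a)\int_G z_V(s)U_sx\ds.
\]
Because $\int_G z_V=1$ and $U$ is strongly continuous at $e$, the integral tends to $x$ as $V$ shrinks, so $\intform(z_V\otimes a)x\to\pi(a)x$; hence $\pi(A)\cdot X\subseteq\overline{\intform(C_c(G,A))\cdot X}$ and non-degeneracy of $\pi$ gives non-degeneracy of $\intform$. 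For the converse, the integrand $s\mapsto\pi(f(s))U_sx$ takes values in the closed subspace $\overline{\pi(A)\cdot X}$, so the second standard fact gives $\intform(C_c(G,A))\cdot X\subseteq\overline{\pi(A)\cdot X}$; non-degeneracy of $\intform$ then forces $\overline{\pi(A)\cdot X}=X$. This is the point at which the bounded approximate left identity enters, since it is exactly the hypothesis under which Lemma~\ref{l:dyn_sys_restriction_also_non_degenerate} identifies $\overline{\pi(A)\cdot X}$ as the relevant invariant subspace.

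Part $(ii)$ is the same closed-subspace observation: for $y\in Y$ the integrand $s\mapsto\pi(f(s))U_sy$ lies in the closed, $\pi$- and $U$-invariant subspace $Y$, hence so does $\intform(f)y$. For the forward half of $(iii)$ I would pull $\Phi$ through the integral and use the two intertwining relations pointwise,
\[
\Phi\,\intform(f)x=\int_G\Phi\,\pi(f(s))U_sx\ds=\int_G\rho(f(s))V_s\Phi x\ds=(\rho\rt V)(f)\,\Phi x.
\]
For its converse, I would substitute $i_A(a)f$ and $i_G(r)f$ into the assumed identity $\Phi\circ\intform=(\rho\rt V)\circ\Phi$ and apply the relations \eqref{e:intforms_of_i_j_A_G}, obtaining $\Phi\pi(a)\intform(f)=\rho(a)\Phi\intform(f)$ and $\Phi U_r\intform(f)=V_r\Phi\intform(f)$ for all $f$; since $\covrep$ is non-degenerate, part $(i)$ makes the vectors $\intform(f)x$ dense, and boundedness then yields $\Phi\pi(a)=\rho(a)\Phi$ and $\Phi U_r=V_r\Phi$.

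Finally, $(iv)$ is the bookkeeping computation underlying \cite[Proposition~2.23]{williams}. Starting from $\intform(f)^*=\int_G U_s^{-1}\pi(f(s))^*\ds$, which is legitimate because $U$ is $*$-strongly continuous (Remark~\ref{r:unitary_is_*-strong-cont}) so that the involution passes through the integral by \eqref{d:involution_through_integral}, the substitution $s\mapsto s^{-1}$ together with the modular factor, covariance, and $U_s^*=U_s^{-1}$ reproduces $\intform(f^*)$ for the involution \eqref{e:involution_definition}. I expect the genuine obstacle to be the converse of $(iii)$: it is the only place where the pointwise actions of $\pi$ and $U$ must be reconstructed from the integrated form, and this works precisely because non-degeneracy of $\covrep$ guarantees, via $(i)$, that the $\intform(f)x$ span a dense subspace.
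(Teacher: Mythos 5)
Your proposal is correct, and on parts $(i)$ (forward direction), $(ii)$, $(iii)$ and $(iv)$ it follows the paper's proof essentially verbatim: the same smoothing argument $\intform(z_V\otimes a)x\to\pi(a)x$ using strong continuity of $U$ at $e$, the same closed-subspace observation for invariance, the same pulling of $\Phi$ through the integral, the same substitution of $i_A(a)f$ and $i_G(r)f$ into the assumed intertwining relation via \eqref{e:intforms_of_i_j_A_G} followed by the density argument supplied by part $(i)$, and the same Williams-style computation for the involution. The one genuine difference is the converse of $(i)$. The paper proves it by taking a bounded approximate left identity $(u_i)$ of $A$, invoking Remark~\ref{r:strong_continuity_approx_identity} to reduce non-degeneracy of $\pi$ to $\pi(u_i)x\to x$, and verifying this on the dense subspace $\intform(C_c(G,A))\cdot X$ via $\pi(u_i)\intform(f)y=\intform(i_A(u_i)f)y\to\intform(f)y$, which requires Lemma~\ref{l:i_j_A_continuous_in_inductive} and the inductive-limit continuity of $\intform$ (Lemma~\ref{l:quotient_map_continuous_in_inductive_limit_topology_on_C_c(G,A)}). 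Your Hahn--Banach argument --- the integrand $s\mapsto\pi(f(s))U_sx$ takes values in the closed subspace $\overline{\pi(A)\cdot X}$, hence so does $\intform(f)x$, so non-degeneracy of $\intform$ forces $\overline{\pi(A)\cdot X}=X$ --- is shorter, avoids both lemmas, and in fact never uses the approximate identity at all, so it establishes the converse without that hypothesis. Your closing sentence on this point is therefore spurious: non-degeneracy of $\pi$ \emph{is}, by the paper's definition, exactly the equality $\overline{\pi(A)\cdot X}=X$ that you have already proved, and Lemma~\ref{l:dyn_sys_restriction_also_non_degenerate} plays no role; the bounded approximate left identity appears in the paper's statement only because of the particular route (via $\pi(u_i)\to\id_X$) that the authors chose.
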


\begin{proof}
\begin{enumerate}
 \item Suppose $0 \not= x \in X$ is of the form $x = \pi(a)y$, and let $\eps > 0$. By the strong continuity of $U$ there exists a neighbourhood $V$ of $e$ such that $s \in V$ implies that $\norm{U_s y - y} < \eps / \norm{\pi(a)}$. Let $z \in C_c(G)$ be nonnegative with compact support contained in $V$ and with integral equal to 1. Define $f := z \otimes a \in C_c(G,A)$, then
\begin{align*}
 \norm{\intform(f)y - x} &= \norm{ \int_G \pi(f(s)) U_s y \ds - \int_G z(s)x \ds} \\
&\leq \int_G z(s) \norm{\pi(a) U_s y - \pi(a)y} \,ds \\
&\leq \int_G z(s) \norm{\pi(a)} \norm{U_s y - y} \,ds \\
&\leq \int_{\supp(z)} z(s) \norm{\pi(a)} \frac{\eps}{\norm{\pi(a)}} \ds = \eps.
\end{align*}
This implies that $\overline{\intform(C_c(G,A))\cdot X} \supset \overline{\pi(A)\cdot X}$. Therefore, if $\pi$ is non-degenerate, then so is $\intform$.

For the converse, let $(u_i)$ be a bounded approximate left identity of $A$. By Remark~\ref{r:strong_continuity_approx_identity} we have to show that $\pi(u_i)x \to x$, for all $x \in X$. By the boundedness of $\pi$ and $(u_i)$ and an easy $3 \eps$-argument, it is sufficient to show this for $x$ in a dense subset of $X$. For this we choose $\intform(C_c(G,A))\cdot X$, which is dense in $X$ by assumption. Let $f \in C_c(G,A)$ and $y \in X$, then using \eqref{e:intforms_of_i_j_A_G},
\[  \pi(u_i) \intform(f)y = \intform(i_A(u_i)f)y \to \intform(f)y \]
by Lemma~\ref{l:i_j_A_continuous_in_inductive} and the continuity of $\intform$ in the inductive limit topology (Lemma~\ref{l:quotient_map_continuous_in_inductive_limit_topology_on_C_c(G,A)}).
By linearity, this implies that  $\pi(u_i)x \to x$ for all $x\in\intform(C_c(G,A)) \cdot X$, as desired.

\item If $Y$ is a closed subspace invariant for both $\pi$ and $U$, then it is immediate from the properties of our vector-valued integral that $Y$ is also invariant under $\intform(C_c(G,A))$.

\item Let $\Phi: X \to Y$ be a bounded intertwining operator between $\covrep$ and $(\rho, V)$. Then for $x \in X$ and $f \in C_c(G,A)$ we have
\begin{align*}
 \Phi \circ \intform(f) &= \Phi \circ \int_G \pi(f(s)) U_s \ds \\
&= \int_G \Phi \, \pi(f(s)) U_s \ds \\
&= \int_G \rho(f(s)) \, \Phi \, U_s \ds \\
&= \int_G \rho(f(s)) V_s \, \Phi \ds \\
&= \int_G \rho(f(s)) V_s \ds \circ \Phi \\
&= \rho \rt V(f) \circ \Phi.
\end{align*}

Conversely, suppose that $\Phi: X \to Y$ is a bounded intertwining operator for $\aintform$ and $V \rt \rho$ and that $\covrep$ is non-degenerate. For elements of $X$ of the form $\intform(f)x$, where $x \in X$ and $f \in C_c(G,A)$, we obtain for $r \in G$, using \eqref{e:intforms_of_i_j_A_G},
\begin{align*}
 [\Phi \circ U_r] (\intform(f)x) &= [\Phi \circ U_r \circ (\intform)(f)] x \\
&= [\Phi \circ (\intform)(i_G(r)f)] x \\
&= [(\rho \rt V)(i_G(r)f) \circ \Phi] x \\
&= [V_r \circ (\rho \rt V)(f) \circ \Phi] x \\
&= [V_r \circ \Phi \circ (\intform)(f)] x \\
&= [V_r \circ \Phi] (\intform(f)x).
\end{align*}
By $(i)$, $\intform$ is non-degenerate, and so $\Phi \circ U_r$ and $V_r \circ \Phi$ agree on a dense subset of $X$ and hence are equal. Similarly we obtain that $\Phi \circ \pi(a) = \rho(a) \circ \Phi$ for all $a \in A$.

\item This has been shown in Section~\ref{sec:construction_and_basic_properties}, following \eqref{d:intformdef}.
\end{enumerate}
\end{proof}

Together with Lemma~\ref{l:completion_associated_with_seminorm} the above proposition immediately leads to most the following.

\begin{theorem}\label{t:summary_from_dyn_sys_to_crossed_product}
Let $\dynsys$ be a Banach algebra dynamical system, and let $\mathcal{R}$ be a non-empty uniformly bounded class of continuous covariant representations. Consider the assignment $\covrep \to (\intform)^\mr$ from the $\mr$-continuous covariant representations of $\dynsys$ to the bounded representations of $\crosprod$.
\begin{enumerate}
 \item If $\covrep$ is non-degenerate, then $\intformr$ is a non-degenerate representation of $\crosprod$. If $A$ has a bounded approximate left identity, the converse also holds.
 \item If $Y$ is a closed subspace of $X$ which is invariant for both $\pi$ and $U$, then $Y$ is invariant for $\intformr$. If $\covrep$ is non-degenerate and $A$ has a bounded approximate left identity, the converse also holds.
 \item If $Y$ is a Banach space, $(\rho, V)$ a continuous covariant representation on $Y$ and $\Phi: X \to Y$ a bounded intertwining operator between $\covrep$ and $(\rho,V)$, then $\Phi$ is a bounded intertwining operator between $\intformr$ and $(\rho \rt V)^\mr$. If $\covrep$ is non-degenerate, the converse also holds.
 \item If $\dynsys$, $\mr$, and $\covrep$ are involutive, then so is $\intformr$.
\end{enumerate}

Furthermore, for a general Banach dynamical system, $\normr{\intform}=\norm{(\intform)^\mr}$, for each $\mr$-continuous covariant representation $\covrep$ of $\dynsys$.
\end{theorem}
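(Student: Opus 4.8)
The plan is to derive the whole theorem by transporting Proposition~\ref{p:integrated_form_is_non_degenerate_rep_of_C_c(G,A)} from the level of $C_c(G,A)$ to the level of $\crosprod$ by means of Lemma~\ref{l:completion_associated_with_seminorm}. The point is that $\intformr$ is, by \eqref{e:intformr_definition}, exactly the bounded representation produced by Lemma~\ref{l:completion_associated_with_seminorm} when that lemma is applied to the $\sigmar$-bounded representation $\intform\colon C_c(G,A)\to B(X)$, taking $D=C_c(G,A)$ with the seminorm $\sigmar$, $E=B(X)$, and completion $\overline{D/\ker(\sigmar)}^{\sigmar}=\crosprod$. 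Since Lemma~\ref{l:completion_associated_with_seminorm} asserts that non-degeneracy, invariance of closed subspaces, the intertwining relation, the involutive property, and the operator norm are all preserved in the passage from $\intform$ to $(\intform)^\mr$, each clause of the theorem reduces, for the most part, to the corresponding clause for $\intform$, which is precisely the content of Proposition~\ref{p:integrated_form_is_non_degenerate_rep_of_C_c(G,A)}; the single exception is taken up in the last paragraph.

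In detail, for (i) I would combine the equivalence ``$\intform$ non-degenerate $\iff\intformr$ non-degenerate'' of Lemma~\ref{l:completion_associated_with_seminorm} with Proposition~\ref{p:integrated_form_is_non_degenerate_rep_of_C_c(G,A)}(i), the converse using the clause of that proposition that requires a bounded approximate left identity in $A$. For (iii) I would likewise splice the two-directional intertwining statement of Proposition~\ref{p:integrated_form_is_non_degenerate_rep_of_C_c(G,A)}(iii) with the intertwining equivalence in Lemma~\ref{l:completion_associated_with_seminorm}, the converse holding only under non-degeneracy of $\covrep$. Part (iv) is immediate from Proposition~\ref{p:integrated_form_is_non_degenerate_rep_of_C_c(G,A)}(iv) and the involutive clause of Lemma~\ref{l:completion_associated_with_seminorm}, and the final identity $\normr{\intform}=\norm{(\intform)^\mr}$ is just the norm-preservation assertion of Lemma~\ref{l:completion_associated_with_seminorm} read with $T=\intform$.

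The only clause that does not follow at once, and hence the main obstacle, is the converse in (ii). Here Lemma~\ref{l:completion_associated_with_seminorm} supplies ``$Y$ invariant for $\intformr\iff Y$ invariant for $\intform$'', but Proposition~\ref{p:integrated_form_is_non_degenerate_rep_of_C_c(G,A)}(ii) only gives the forward implication ``$Y$ invariant for $\pi$ and $U\Rightarrow Y$ invariant for $\intform$'', so it does not retrieve invariance under $\pi$ and $U$ from invariance under $\intform$. To bridge this under the stated hypotheses I would argue as follows. By part (i) the representation $\intformr$ is non-degenerate, and by Corollary~\ref{c:approximate_identities} the algebra $\crosprod$ has a bounded left approximate identity; Lemma~\ref{l:alg_restriction_also_non_degenerate}(1), applied to the non-degenerate bounded representation $\intformr$ of $\crosprod$ and its closed invariant subspace $Y$, then shows that the restriction of $\intformr$ to $Y$ is non-degenerate. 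Because $\qr(C_c(G,A))$ is dense in $\crosprod$ and $\intformr\circ\qr=\intform$, it follows that $\intform(C_c(G,A))\cdot Y$ is dense in $Y$. Now, for $f\in C_c(G,A)$, $y\in Y$, $a\in A$ and $r\in G$, the identities \eqref{e:intforms_of_i_j_A_G} give $\pi(a)\intform(f)y=\intform(i_A(a)f)y$ and $U_r\intform(f)y=\intform(i_G(r)f)y$, and both of these lie in $Y$ since $Y$ is invariant for $\intform$. Thus $\pi(a)$ and $U_r$ carry a dense subset of $Y$ into $Y$; as they are bounded operators (the latter by Lemma~\ref{l:str_cont_compactly_bounded}) and $Y$ is closed, they map $Y$ into $Y$, which is the required invariance of $Y$ under both $\pi$ and $U$.
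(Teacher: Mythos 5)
Your proposal is correct and follows essentially the same route as the paper: all parts except the converse in (ii) are obtained by splicing Proposition~\ref{p:integrated_form_is_non_degenerate_rep_of_C_c(G,A)} with Lemma~\ref{l:completion_associated_with_seminorm}, and the converse in (ii) is bridged exactly as in the paper's proof, via non-degeneracy of the restriction $\intformr|_Y$ (using the bounded left approximate identity of $\crosprod$ and Lemma~\ref{l:alg_restriction_also_non_degenerate}), density of $\intform(C_c(G,A))\cdot Y$ in $Y$, and the identities \eqref{e:intforms_of_i_j_A_G}. The only cosmetic differences are that you cite Corollary~\ref{c:approximate_identities} where the paper cites Theorem~\ref{t:crossed_product_has_bounded_approximate_identities}, and you make explicit the (automatic) boundedness of $U_r$, which the paper leaves implicit.
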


\begin{proof}
 All that has to be shown is that if $Y$ is invariant for $\intformr$, $\covrep$ is non-degenerate and $A$ has a bounded approximate left identity, then $Y$ is invariant for $\covrep$. Under these assumptions, the first part of the theorem shows that $\intformr$ is non-degenerate. By Theorem \ref{t:crossed_product_has_bounded_approximate_identities}, $\crosprod$ has a bounded approximate left identity, so by Lemma \ref{l:alg_restriction_also_non_degenerate}, $\intformr|_Y$ is non-degenerate. Using the density of $\qr(C_c(G,A))$ in $\crosprod$, we obtain that
\[ \intform(C_c(G,A)) \cdot Y = \intformr \left( \qr(C_c(G,A)) \right) \cdot Y \]
is dense in $Y$. Now for $a \in A$, $r \in G$, $y \in Y$ and $f \in C_c(G,A)$ by \eqref{e:intforms_of_i_j_A_G},
\begin{align*}
 \pi(a) \circ \intform(f)y &= \intform(i_A(a)f)y \in Y \\
 U_r \circ \intform(f)y &= \intform(i_G(r)f)y \in Y,
\end{align*}
 so $\pi(a)$ and $U_r$ map a dense subset of $Y$ into $Y$, which proves the claim.
\end{proof}

\section{Centralizer algebras}\label{sec:centralizer_algebras}

The passage from non-degenerate bounded representations of $\crosprod$ to continuous covariant representations of $\dynsys$ in Section~\ref{sec:from_crosprod_to_dynsys} will be obtained using the left centralizer algebra of $\crosprod$. This will be done in Proposition~\ref{p:from_crossed_product_to_dyn_sys} below, and it consists of two steps. The idea is to first construct, by general means, a bounded representation of the left centralizer algebra of $\crosprod$ from a given non-degenerate bounded representation of $\crosprod$, and next to compose this new representation with covariant homomorphisms (to be constructed below) of $A$ and $G$ into this left centralizer algebra, thus obtaining (at least algebraically) a covariant representations of the group and the algebra.

In the present section, which is a preparation for the next, we start by recalling the basic general theorem which underlies the first step in the above procedure. This will make it obvious why it is so important that $\crosprod$ has a bounded left approximate identity if $A$ has one, something which is---as observed before--automatic in the $C^*$-case, but not in the general setting.  Next  we construct the homomorphisms needed for the second step. We also include some results for the double centralizer algebra of $\crosprod$; these will be needed for the involutive case only.

Commencing with representations of a general normed algebra and its centralizer algebras, we let $\ma$ be a normed algebra: the results below will be applied with the Banach algebra $\ma = \crosprod$. We let $\LCA \subset B(\ma)$ denotes the unital normed algebra of left centralizers of $\ma$, i.e., the algebra of bounded operators $L: \ma \to \ma$ commuting with all right multiplications, or equivalently, satisfying $L(a)b = L(ab)$ for all $a,b \in \ma$. Every $a \in \ma$ determines a left centralizer by left multiplication, and we let $\lambda: \ma \to \LCA$ denotes the corresponding homomorphism. Likewise, the algebra $\RCA\subset B(\ma)$ denotes the unital normed algebra of right centralizers, i.e., the algebra of operators $R: \ma \to \ma$ commuting with all left multiplications, or equivalently, satisfying $R(ab) = aR(b)$ for all $a, b \in \ma$, and $\rho: \ma \to \RCA$ denotes the canonical anti-homomorphism. The unital normed algebra of double centralizers of $\ma$ is denoted by $\DCA$ and consists of pairs $(L,R)$, where $L$ is a left centralizer and $R$ is a right centralizer, such that $aL(b) = R(a)b$ for all $a,b \in \ma$. Multiplication in $\mathcal{M}(\ma)$ is defined by $(L_1, R_1) (L_2, R_2) = (L_1 L_2, R_2 R_1)$ and the norm by $\norm{(L,R)}_{\mathcal{M}(\ma)} = \max(\norm{L}, \norm{R})$. We let $\phi_l: \DCA \to \LCA$ denote the contractive unital homomorphism $(L, R) \mapsto L$, and $\delta: \mathcal{A} \to \DCA$ denote the homomorphism $a \mapsto (\lambda(a), \rho(a))$.

If $L$ is invertible in $\LCA$ and $R$ is invertible in $\RCA$, then $(L,R)$ is invertible in $\DCA$ with inverse $(L,R)^{-1} = (L^{-1}, R^{-1})$. If $\ma$ has a bounded involution, then for $L \in \mathcal{M}_l(\ma)$ the map $L^*: \ma \to \ma$ defined by $L^*(a) := (L(a^*))^*$ is a right centralizer, and for $R \in \mathcal{M}_r(\ma)$ the map $R^*$ defined by $R^*(a) := (R(a^*))^*$ is a left centralizer. Furthermore $(L^*)^*=L$ and $(R^*)^*=R$. As a consequence, the map $(L,R) \mapsto (R^*, L^*)$ is a bounded involution on $\mathcal{M}(\ma)$.

Obviously, if $\ma$ is a Banach algebra, then so are $\LCA$, $\RCA$, and $\DCA$.

In the following theorem we collect a few results from \cite[Remark~2.2, Theorem~4.1 and Theorem~4.5]{extendart}. The constant $M_l^{\ma}$ in it is was defined in Definition~\ref{d:infimum_of_bounds} as the infimum of the upper bounds of all approximate left identities. The theorem implies, in particular, that, given a non-degenerate bounded Banach space representation of a normed algebra with a bounded approximate left identity, there exists a unique representation (which is then automatically bounded and non-degenerate) of its left centralizer algebra which is compatible with the canonical homomorphism $\lambda: \ma \to \LCA$. This is a crucial step in our approach, and it should be thought of as the analogue of extending a representation of a $C^*$-algebra to its multiplier algebra.

\begin{theorem}\label{t:summary_for_centralizers}
Let $\mathcal{A}$ be a normed algebra with a bounded approximate left identity, and let $X$ be a Banach space.

If $T: \mathcal{A} \to \mathcal B(X)$ is a non-degenerate bounded representation, then there exists a unique homomorphism $\overline{T}: \LCA \to \mathcal B(X)$ such that the diagram
\begin{equation}\label{diag:left_centralizer_diagram}
\xymatrix{
\mathcal{A} \ar[r]^T \ar[rd]_{\lambda} \ar[d]^\delta & \mathcal B(X) \\
\DCA \ar[r]_{\phi_l} & \LCA \ar[u]_{\overline{T}}
}
\end{equation}
is commutative. All maps in the diagram are bounded homomorphisms, and $\overline{T}$ is unital. One has $\norm{\overline{T}} \leq M_l^{\ma} \norm{T}$, which implies $\norm{\overline{T} \circ \phi_l} \leq M_l^{\ma} \norm{T}$. In particular, $\overline{T}$ and $\overline{T} \circ \phi_l$ are non-degenerate bounded representations of $\LCA$ and $\DCA$ on $X$.

The image $T(\mathcal{A})$ is a left ideal in $\overline{T}(\LCA)$. In fact, if $L \in \LCA$ and $a \in \mathcal{A}$, then
\begin{equation}\label{e:compatibility}
\overline{T}(L) \circ T(a) = T(L(a)).
\end{equation}

If $(u_i)$ is any bounded approximate left identity of $\mathcal{A}$ and if $L\in\LCA$, then for $x \in X$ we have
\begin{equation}\label{e:extension_as_sot_limit}
\overline{T}(L)x = \lim_i T(L(u_i))x.
\end{equation}
In particular, the set of closed invariant subspaces of $T$ coincides with the set of closed invariant subspaces of $\overline{T}$, and if $S: \mathcal{A} \to B(X)$ is another non-degenerate bounded representation, then the set of bounded intertwining operators of $T$ and $S$ coincides with the set of bounded intertwining operators of $\overline{T}$ and $\overline{S}$.

If in addition $\mathcal{A}$ has a bounded involution, $X$ is a Hilbert space and $T$ is involutive, then $\overline{T} \circ \phi_l$ is involutive.
\end{theorem}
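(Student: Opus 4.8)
The plan is to build $\overline{T}$ directly as the strong-operator-topology limit that becomes \eqref{e:extension_as_sot_limit}, and then to read off every remaining assertion from it. First I would fix a bounded approximate left identity $(u_i)$ of $\ma$, say bounded by $M$, and fix $L \in \LCA$. Because $L$ is a left centralizer, $T(L(u_i))\,T(a) = T(L(u_i)a) = T(L(u_i a))$ for $a \in \ma$, and since $u_i a \to a$ with $L$ and $T$ bounded, this converges to $T(L(a))$. Thus $(T(L(u_i)))_i$ converges strongly on the dense subspace $\textup{span}\,T(\ma)\cdot X$; as $\norm{T(L(u_i))} \le \norm{T}\norm{L}M$, the net is uniformly bounded and therefore converges strongly on all of $X$ to an operator I define to be $\overline{T}(L)$, with $\norm{\overline{T}(L)} \le \norm{T}\norm{L}M$.

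The value of this limit on $x = T(a)y$ is $T(L(a))y$, which is independent of the approximate identity; since such $x$ are dense, $\overline{T}(L)$ does not depend on $(u_i)$, so optimizing $M$ over all admissible bounds gives $\norm{\overline{T}} \le M_l^{\ma}\norm{T}$, and \eqref{e:extension_as_sot_limit} holds for every bounded approximate left identity. The same computation yields the compatibility \eqref{e:compatibility}, $\overline{T}(L)\circ T(a) = T(L(a))$, whence $T(\ma)$ is a left ideal in $\overline{T}(\LCA)$; taking $L = \lambda(a)$ gives $\overline{T}\circ\lambda = T$, i.e.\ commutativity of the diagram. Linearity is clear, and I would verify the homomorphism and unital properties by checking both sides on the dense subspace, using $(L_1L_2)(a) = L_1(L_2(a))$ and $\id_{\ma}(a)=a$; non-degeneracy of $\overline{T}$ and $\overline{T}\circ\phi_l$ is immediate from $T(\ma)\cdot X \subseteq \overline{T}(\LCA)\cdot X$. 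For uniqueness, any homomorphism $\overline{T}'$ with $\overline{T}'\circ\lambda = T$ satisfies $\overline{T}'(L)\,T(a) = T(L(a))$ via the identity $L\,\lambda(a) = \lambda(L(a))$ in $\LCA$, forcing $\overline{T}' = \overline{T}$ on a dense set.

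The statements on invariant subspaces and intertwiners I would obtain from \eqref{e:extension_as_sot_limit}. If a closed subspace $Y$ is invariant for $T$, then $T(L(u_i))Y \subseteq Y$ and closedness give $\overline{T}(L)Y \subseteq Y$, while the converse specializes $L$ to $\lambda(a)$. Likewise a bounded $\Phi$ intertwining $T$ and $S$ satisfies $\Phi\,\overline{T}(L) = \lim_i \Phi\,T(L(u_i)) = \lim_i S(L(u_i))\,\Phi = \overline{S}(L)\,\Phi$, and the reverse implication again follows by taking $L = \lambda(a)$.

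For the involutive case, with $X$ a Hilbert space and $T$ involutive, it suffices to show $\overline{T}(R^*) = \overline{T}(L)^*$ for a double centralizer $(L,R)$, since the involution on $\DCA$ is $(L,R)\mapsto(R^*,L^*)$. Testing on the dense family $x = T(a)\xi$, $y = T(b)\eta$ and using \eqref{e:compatibility}, the involutivity of $T$, and $(R^*(a))^* = R(a^*)$, I would compute $\langle \overline{T}(R^*)x, y\rangle = \langle \xi, T(R(a^*)b)\eta\rangle$ and $\langle x, \overline{T}(L)y\rangle = \langle \xi, T(a^*L(b))\eta\rangle$, which coincide by the defining double-centralizer relation $a^*L(b) = R(a^*)b$. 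The main obstacle I anticipate is not a single computation but the bookkeeping that ties the norm to $M_l^{\ma}$: one must first establish that $\overline{T}(L)$ is genuinely independent of the chosen approximate identity before optimizing the bound $\norm{T}\norm{L}M$ over $M$; the involutive identity is the most delicate step, but it reduces cleanly to the double-centralizer relation once \eqref{e:compatibility} is available.
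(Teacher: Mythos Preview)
Your proof is correct and follows the natural strategy of defining $\overline{T}(L)$ as the strong-operator limit of $T(L(u_i))$, then reading off all remaining assertions from the compatibility relation \eqref{e:compatibility}. The paper itself does not supply a proof of this theorem: it is quoted from the companion paper \cite{extendart} (specifically Remark~2.2, Theorem~4.1 and Theorem~4.5 there), so there is no in-paper argument to compare against. Your outline is essentially the standard construction one would expect in that reference; the only points worth flagging are minor. In the uniqueness step, note that any homomorphism $\overline{T}'$ into $\mathcal B(X)$ automatically takes values in bounded operators, so agreement of $\overline{T}'(L)$ and $\overline{T}(L)$ on the dense set $T(\ma)\cdot X$ does extend to all of $X$ --- you implicitly use this. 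In the norm bound, the infimum $M_l^{\ma}$ need not be attained, but your argument still works because for every $\eps>0$ there is an $(M_l^{\ma}+\eps)$-bounded approximate left identity, and the operator $\overline{T}(L)$ is independent of the choice; hence $\norm{\overline{T}(L)}\le \norm{T}\norm{L}(M_l^{\ma}+\eps)$ for all $\eps>0$.
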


If, returning to our original context, $\dynsys$ is a Banach algebra dynamical system, and $\mr$ is a non-empty uniformly bounded class of continuous covariant representations, then each $\covrep$ in $\mr$, being obviously $\mr$-continuous, yields a bounded (even contractive) representation $(\intform)^\mr$ of $\crosprod$, and if $\covrep\in\mr$ is non-degenerate, then $(\intform)^\mr$ is non-degenerate as well, by Theorem~\ref{t:summary_from_dyn_sys_to_crossed_product}. If, in addition, $A$ has a bounded approximate left identity, then $\crosprod$ has a bounded approximate left identity by Corollary~\ref{c:approximate_identities}, hence Theorem~\ref{t:summary_for_centralizers} provides a bounded representation $\overline{(\intform)^\mr}$ of $\mathcal M_l(\crosprod)$. These representations $\overline{(\intform)^\mr}$, for $\covrep\in\mr$, are used in the following result, which is a parallel of the separation property in Proposition~\ref{p:norm_formula_in_crossed_product}.

\begin{prop}\label{p:representations_separate_left_centralizers}
 Let $\dynsys$ be a Banach algebra dynamical system, where $A$ has a bounded approximate left identity, and let $\mathcal{R}$ be a non-empty uniformly bounded class of non-degenerate continuous covariant representations. Then the non-degenerate bounded representations $\overline{(\intform)^\mr}$ of $\mathcal M_l(\crosprod)$, for $\covrep \in \mathcal{R}$, separate the points of $\mathcal M_l(\crosprod)$.
\end{prop}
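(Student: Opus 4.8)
The plan is to reduce the separation statement for $\mathcal M_l(\crosprod)$ directly to the separation statement for $\crosprod$ itself, already recorded in Proposition~\ref{p:norm_formula_in_crossed_product}, by transporting a nonzero value of a left centralizer through the compatibility relation \eqref{e:compatibility} that links $\overline{(\intform)^\mr}$ to $(\intform)^\mr$. First I would check that under the standing hypotheses all the objects in the statement genuinely exist. Since $A$ has a bounded left approximate identity, Corollary~\ref{c:approximate_identities} shows that $\crosprod$ has one as well; and since every $\covrep\in\mr$ is non-degenerate, Theorem~\ref{t:summary_from_dyn_sys_to_crossed_product} shows that each $(\intform)^\mr$ is a non-degenerate bounded representation of $\crosprod$. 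Hence Theorem~\ref{t:summary_for_centralizers} indeed associates to each of them a non-degenerate bounded representation $\overline{(\intform)^\mr}$ of $\mathcal M_l(\crosprod)$, so the assertion is not vacuous.

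The core of the argument is then very short. Let $L\in\mathcal M_l(\crosprod)$ be nonzero. By definition $L$ is a nonzero bounded operator on $\crosprod$, so there exists $c\in\crosprod$ with $L(c)\neq 0$. Proposition~\ref{p:norm_formula_in_crossed_product} asserts that the representations $(\intform)^\mr$, for $\covrep\in\mr$, separate the points of $\crosprod$, so I may choose some $\covrep\in\mr$ for which $(\intform)^\mr(L(c))\neq 0$.

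Finally I would invoke the compatibility relation \eqref{e:compatibility} of Theorem~\ref{t:summary_for_centralizers}, applied with $T=(\intform)^\mr$ and $a=c$, to obtain
\[
\overline{(\intform)^\mr}(L)\circ (\intform)^\mr(c)=(\intform)^\mr(L(c))\neq 0.
\]
Since the right-hand side is nonzero, the operator $\overline{(\intform)^\mr}(L)$ cannot be the zero operator, which is exactly what separation of the points of $\mathcal M_l(\crosprod)$ demands.

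I do not expect a genuine obstacle in this proof: the whole point is that Proposition~\ref{p:norm_formula_in_crossed_product} and the relation \eqref{e:compatibility} have already done the substantive work, and the proposition follows simply by pushing a nonzero value $L(c)$ through a separating representation of $\crosprod$ and pulling the result back to $\overline{(\intform)^\mr}(L)$ via \eqref{e:compatibility}. The only point requiring a moment's care is the bookkeeping at the outset, namely verifying that the non-degeneracy hypothesis on $\mr$ together with the approximate-identity hypothesis on $A$ are precisely what allow the representations $\overline{(\intform)^\mr}$ to be defined in the first place.
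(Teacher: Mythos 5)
Your proof is correct and uses exactly the same two ingredients as the paper's own argument, namely Proposition~\ref{p:norm_formula_in_crossed_product} and the compatibility relation \eqref{e:compatibility}; the only difference is that you argue contrapositively (a nonzero $L$ admits a representation not annihilating it), whereas the paper assumes $\overline{(\intform)^\mr}(L)=0$ for all $\covrep\in\mr$ and concludes $\normr{L(c)}=0$ for every $c$ via the norm formula. This is a purely cosmetic reformulation, so the two proofs are essentially identical.
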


\begin{proof}
Let $L\in\mathcal M_l(\crosprod)$ be such that $\overline{(\intform)^\mr} (L)=0$, for all $\covrep \in \mathcal{R}$. Then, for arbitrary $c\in\crosprod$, the combination of Proposition~\ref{p:norm_formula_in_crossed_product} and \eqref{e:compatibility} shows that
\begin{align*}
\normr{L(c)}&= \sup_{\covrep \in \mathcal{R}} \norm{(\intform)^\mr(L(c))}\\
&=\sup_{\covrep \in \mathcal{R}} \norm{\overline{(\intform)^\mr}(L)\circ(\intform)^\mr (c)}\\
&=0.
\end{align*}
Hence $L=0$.
\end{proof}

We continue our preparation for the representation theory in the next section by investigating a particular continuous covariant representation of $\dynsys$ in $\crosprod$, needed for the second step in the procedure outlined in the beginning of this section. An important feature, in view of Theorem~\ref{t:summary_for_centralizers}, of this particular continuous covariant representations is that the corresponding images of $A$ and $G$ are contained in the left centralizer algebra $\mathcal M_l(\crosprod)$ of $\crosprod$, so that it can be composed with representations of $M_l(\crosprod)$ resulting from the first step. We will now proceed to construct this continuous covariant representations, which is done using the actions of $A$ and $G$ on $C_c(G,A)$, as defined in \eqref{e:i_definitions}.

\begin{lemma}\label{l:i_action_sigma_bounded}
Let $\dynsys$ be a Banach algebra dynamical system, and let $\mr$ be a non-empty uniformly bounded class of continuous covariant representations. Let $a\in A$ and $r\in G$. Then the maps
\[ i_A(a), \; i_G(r): (C_c(G,A), \sigmar) \to (C_c(G,A), \sigmar) \]
are bounded. In fact,
\[ \normr{i_A(a)} \leq \sup_{\covrep\in\mr} \norm{\pi(a)}\leq \Cr\norm{a}, \]
and
\[ \normr{i_G(r)} \leq \nur(r).\]
\end{lemma}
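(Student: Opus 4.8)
Lemma~\ref{l:i_action_sigma_bounded} asserts that the maps $i_A(a)$ and $i_G(r)$ on $C_c(G,A)$ are $\sigmar$-bounded, with the explicit bounds $\normr{i_A(a)}\leq\sup_{\covrep\in\mr}\norm{\pi(a)}\leq\Cr\norm{a}$ and $\normr{i_G(r)}\leq\nur(r)$. Let me sketch how I would prove this.

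The plan is to reduce everything to the two intertwining identities already recorded in Proposition~\ref{p:i_j_A_G_properties}, namely $\intform(i_A(a)f) = \pi(a)\circ\intform(f)$ and $\intform(i_G(r)f) = U_r\circ\intform(f)$, valid for every continuous covariant representation $\covrep$. Since the $\sigmar$-seminorm on $C_c(G,A)$ is by definition $\sigmar(g) = \sup_{\covrep\in\mr}\norm{\intform(g)}$, these identities convert the action of $i_A(a)$ and $i_G(r)$ inside the integrated form into a left multiplication by $\pi(a)$, respectively $U_r$, on the operator $\intform(f)$. Submultiplicativity of the operator norm then does the rest.

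Concretely, for $i_A(a)$ I would compute, for $f\in C_c(G,A)$,
\[
\sigmar(i_A(a)f) = \sup_{\covrep\in\mr}\norm{\intform(i_A(a)f)} = \sup_{\covrep\in\mr}\norm{\pi(a)\circ\intform(f)} \leq \sup_{\covrep\in\mr}\brackets{\norm{\pi(a)}\,\norm{\intform(f)}}.
\]
For each individual $\covrep\in\mr$ the factor $\norm{\pi(a)}$ is dominated by $\sup_{\covrep\in\mr}\norm{\pi(a)}$, so pulling this (now constant) supremum out in front yields
\[
\sigmar(i_A(a)f) \leq \brackets{\sup_{\covrep\in\mr}\norm{\pi(a)}}\sup_{\covrep\in\mr}\norm{\intform(f)} = \brackets{\sup_{\covrep\in\mr}\norm{\pi(a)}}\sigmar(f),
\]
which gives $\normr{i_A(a)}\leq\sup_{\covrep\in\mr}\norm{\pi(a)}$. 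The second inequality $\sup_{\covrep\in\mr}\norm{\pi(a)}\leq\Cr\norm{a}$ is immediate from $\norm{\pi(a)}\leq\norm{\pi}\,\norm{a}$ together with the definition $\Cr=\sup_{\covrep\in\mr}\norm{\pi}$ in Definition~\ref{d:uniformly_bounded_class}.

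The estimate for $i_G(r)$ is entirely parallel: using $\intform(i_G(r)f)=U_r\circ\intform(f)$ and the same supremum manipulation, one obtains $\sigmar(i_G(r)f)\leq\brackets{\sup_{\covrep\in\mr}\norm{U_r}}\sigmar(f)$, and $\sup_{\covrep\in\mr}\norm{U_r}$ is precisely $\nur(r)$ by its definition in \eqref{e:nur_definition}. I do not expect any genuine obstacle here; the only point requiring a moment's care is the step of factoring a product of suprema, where one uses that $\sup_i(\alpha\beta_i)\leq(\sup_i\alpha_i)(\sup_i\beta_i)$ for nonnegative quantities — here applied with the $\covrep$-independent bound on the $\pi(a)$ (resp.\ $U_r$) factor — rather than attempting to bound a supremum of a product by a product of suprema in a more naive way.
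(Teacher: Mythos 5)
Your proof is correct and follows essentially the same route as the paper: both rely on the identities $\intform(i_A(a)f)=\pi(a)\circ\intform(f)$ and $\intform(i_G(r)f)=U_r\circ\intform(f)$ from Proposition~\ref{p:i_j_A_G_properties}, then use submultiplicativity of the operator norm and take the supremum over $\covrep\in\mr$, with the $\Cr\norm{a}$ bound coming directly from Definition~\ref{d:uniformly_bounded_class}. Your extra care about the supremum-factoring step is exactly the (minor) point the paper's computation also handles, so there is nothing to add.
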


\begin{proof}
Let $a\in A$. Then, for $f\in C_c(G,A)$ and $\covrep \in \mr_r$, using \eqref{e:intforms_of_i_j_A_G} in the first step, we find that
\begin{align*}
  \norm{\intform(i_A(a)f)} &= \norm{\pi(a) \circ \aintform(f)} \\
&\leq \left(\sup_{\covrep\in\mr} \norm{\pi(a)}\right) \left(\sup_{\covrep\in\mr} \norm{\intform(f)}\right)\\
&=\left(\sup_{\covrep\in\mr} \norm{\pi(a)}\right) \sigmar(f).
\end{align*}
Taking the supremum over $\covrep \in \mr$ implies the statement concerning $i_A(a)$. The statement concerning $i_G(r)$ follows similarly.
\end{proof}

As a consequence of the above proposition and Lemma~\ref{l:completion_associated_with_seminorm}, the operators $i_A(a)$ and $i_G(r)$ yield bounded operators from $\crosprod$ to itself with the same norm. For typographical reasons, we will denote these elements of $B(\crosprod)$ by $i_A^\mr(a)$ and $i_G^\mr(r)$ rather than $i_A(a)^\mr$ and $i_G(r)^\mr$. Hence, if $a \in A$ and $r \in G$, then $i_A^\mr(a), \; i_G^\mr(r) \in B(\crosprod)$ are determined by
\begin{align}\label{e:i_j_A_G^r_definition}
i_A^\mr(a)(\qr(f))=\qr(i_A(a)f), \quad i_G^\mr(r)(\qr(f)) = \qr(i_G(r)f)
\end{align}
for all $f\in C_c(G,A)$.

In Proposition~\ref{p:i_j_A_G_properties} we have noted that $i_A: A \to \End (C_c(G,A))$ and $i_G: G \to \End (C_c(G,A))$ are homomorphisms. As a consequence of \eqref{e:i_j_A_G^r_definition} and the density of $\qr(C_c(G,A))$ in $\crosprod$, the same is then true for $i_A^\mr : A \to B(\crosprod)$ and $i_G^\mr: G \to B(\crosprod)$. Hence we have a pair of representations $(i_A^\mr, i_G^\mr)$ on $\crosprod$.

\begin{prop}\label{p:covariant_representation_on_crossed_product}
Let $\dynsys$ be a Banach algebra dynamical system, and let $\mathcal{R}$ be a non-empty uniformly bounded class of continuous covariant representations. Then $(i_A^\mr, i_G^\mr)$, as defined by \eqref{e:i_j_A_G^r_definition}, is a continuous covariant representation of $\dynsys$ in $\crosprod$. The images $i_A^\mr(A)$ and $i_G^\mr(G)$ are contained in the left centralizer algebra $\mathcal M_l(\crosprod)$ of $\crosprod$, so we have
\begin{align*}
 i_A^\mr: A \to \mathcal M_l(\crosprod) \subset B(\crosprod), \\
 i_G^\mr: G \to \mathcal M_l(\crosprod) \subset B(\crosprod).
\end{align*}
 For the operator norm in $B(\crosprod)$ the estimates
\[ \norm{i_A^\mr(a)} \leq \sup_{\covrep\in\mr} \norm{\pi(a)}\leq \Cr\norm{a}, \]
where $a\in A$,
and
\[ \norm{i_G^\mr(r)} \leq \nur(r),\]
where $r\in G$, hold.

If, in addition, $A$ has a bounded approximate left identity, then $(i_A^\mr, i_G^\mr)$ is non-degenerate.

\end{prop}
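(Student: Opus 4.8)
Since a covariant representation is, by Definition~\ref{d:covariant_representation}, non-degenerate precisely when its algebra component is a non-degenerate representation of $A$, the task is to show that $i_A^\mr : A \to B(\crosprod)$ is non-degenerate. The plan is to exploit the bounded approximate left identity $(u_i)$ of $A$, with bound $M$ say, together with Remark~\ref{r:strong_continuity_approx_identity}: since $i_A^\mr$ is a \emph{bounded} representation of $A$, by the norm estimate $\norm{i_A^\mr(a)} \leq \Cr \norm{a}$ already established above in this proposition, it is non-degenerate if and only if $i_A^\mr(u_i) \to \id_{\crosprod}$ in the strong operator topology. Everything thus reduces to verifying this strong convergence.

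First I would check the convergence on the dense subspace $\qr(C_c(G,A))$ of $\crosprod$ (dense by construction, cf.\ Corollary~\ref{c:image_of_tensor_product_dense_in_crossed_product}). For $f \in C_c(G,A)$, the defining relation \eqref{e:i_j_A_G^r_definition} gives $i_A^\mr(u_i)\qr(f) = \qr(i_A(u_i)f)$. By Lemma~\ref{l:i_j_A_continuous_in_inductive}, $i_A(u_i)f \to f$ in the inductive limit topology of $C_c(G,A)$, and by Lemma~\ref{l:quotient_map_continuous_in_inductive_limit_topology_on_C_c(G,A)} the quotient map $\qr$ is continuous in that topology; hence $i_A^\mr(u_i)\qr(f) = \qr(i_A(u_i)f) \to \qr(f)$.

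It remains to pass from this dense subspace to all of $\crosprod$. The net $\left(i_A^\mr(u_i)\right)$ is uniformly bounded, since $\norm{i_A^\mr(u_i)} \leq \Cr \norm{u_i} \leq \Cr M$ for all $i$. Given $c \in \crosprod$ and $\eps > 0$, I would choose $f \in C_c(G,A)$ with $\normr{c - \qr(f)}$ sufficiently small and then split
\[
\normr{i_A^\mr(u_i)c - c} \leq \normr{i_A^\mr(u_i)(c - \qr(f))} + \normr{i_A^\mr(u_i)\qr(f) - \qr(f)} + \normr{\qr(f) - c}.
\]
The uniform bound controls the first term, the previous paragraph controls the second for $i$ large, and the third is small by the choice of $f$; a routine $3\eps$-argument then yields $i_A^\mr(u_i)c \to c$, establishing $i_A^\mr(u_i) \to \id_{\crosprod}$ strongly and hence the non-degeneracy. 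There is no serious obstacle here once the two continuity lemmas are in place; the only point requiring a little care is the uniform boundedness of the net $\left(i_A^\mr(u_i)\right)$, which is exactly what makes the $3\eps$-argument legitimate and which is supplied by the norm estimate for $i_A^\mr$ proved earlier in this proposition. The argument is, in fact, a direct parallel of the converse direction in Proposition~\ref{p:integrated_form_is_non_degenerate_rep_of_C_c(G,A)}$(i)$.
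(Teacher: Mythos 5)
Your proposal addresses only the final assertion of the proposition. The statement has four distinct claims: $(i)$ the pair $(i_A^\mr, i_G^\mr)$ is a \emph{continuous covariant} representation of $\dynsys$ in $\crosprod$ (which requires verifying the covariance identity $i_G^\mr(r)i_A^\mr(a)i_G^\mr(r)^{-1} = i_A^\mr(\alpha_r(a))$ and, more substantially, the \emph{strong continuity} of $r \mapsto i_G^\mr(r)$); $(ii)$ the images $i_A^\mr(A)$ and $i_G^\mr(G)$ lie in the left centralizer algebra $\mathcal M_l(\crosprod)$, which requires the explicit computations, with the twisted convolution, that $i_A(a)(f*g) = (i_A(a)f)*g$ and $i_G(r)(f*g) = (i_G(r)f)*g$ for $f,g \in C_c(G,A)$, followed by density and boundedness; $(iii)$ the norm estimates (these do follow from Lemma~\ref{l:i_action_sigma_bounded} together with Lemma~\ref{l:completion_associated_with_seminorm}, as the paper notes before the proposition, so citing them as already available is legitimate); and $(iv)$ non-degeneracy when $A$ has a bounded approximate left identity. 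By opening with ``the task is to show that $i_A^\mr$ is non-degenerate'' you discard $(i)$ and $(ii)$ entirely, and in the paper's own proof these constitute the bulk of the work: the covariance computation, the two centralizer computations, and the strong continuity of $i_G^\mr$ (proved via Corollary~\ref{c:strongly_continuous_at_e}, reduction to elementary tensors in $\qr(C_c(G)\otimes A)$ using Corollary~\ref{c:image_of_tensor_product_dense_in_crossed_product}, and the uniform continuity of $z \in C_c(G)$ together with strong continuity of $\alpha$). None of this is routine bookkeeping, and none of it follows from what you wrote.

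The part you did prove is correct, and it is in fact a mild variant of the paper's argument for $(iv)$: the paper shows directly that $i_A^\mr(A)\cdot\qr(C_c(G)\otimes A) = \qr(i_A(A)\cdot C_c(G)\otimes A)$ is dense in $\qr(C_c(G)\otimes A)$, hence in $\crosprod$, using the same two lemmas you invoke (Lemma~\ref{l:i_j_A_continuous_in_inductive} and Lemma~\ref{l:quotient_map_continuous_in_inductive_limit_topology_on_C_c(G,A)}), whereas you route the conclusion through the approximate-identity criterion of Remark~\ref{r:strong_continuity_approx_identity} plus a $3\eps$-argument; both are sound, and yours needs the uniform bound $\norm{i_A^\mr(u_i)} \leq \Cr M$ exactly as you say. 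But as a proof of the proposition as stated, the proposal has a genuine gap: it omits the covariance, the left-centralizer property, and the strong continuity of $i_G^\mr$, without which there is no continuous covariant representation to speak of.
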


Although it does not follow from the estimates for the operator norm in Proposition~\ref{p:covariant_representation_on_crossed_product}, if $A$ has a bounded approximate left identity and all elements of $\mr$ are non-degenerate, then it is actually true that $(i_A^\mr, i_G^\mr)$ is $\mr$-continuous, see Theorem~\ref{t:integrated_form_of_representation_as_left_centralizers_is_canonical}. Proving this will require some extra effort, and we will only be able to do so once more information has been obtained about the relation between $\mr$-continuous covariant representations of $\dynsys$ and bounded representations of $\crosprod$.

\begin{proof}
We start by proving the covariance of $(i_A^\mr, i_G^\mr)$. For this it is sufficient to show that the pair $(i_A, i_G)$ is covariant, i.e., that $[i_G(r)i_A(a)i_G(r)^{-1}f](s) = [i_A(\alpha_r(a))f](s)$ for all $f \in C_c(G,A)$, $r,s \in G$, and $a \in A$. Indeed,
\begin{align*}
 [i_G(r)i_A(a)i_G(r)^{-1}f](s) &= \alpha_r[(i_A(a)i_G(r)^{-1}f)(r^{-1}s)]\\
&=\alpha_r[ai_G(r)^{-1}f(r^{-1}s)]\\
&=\alpha_r[a\alpha_{r^{-1}}(f(rr^{-1}s))]\\
&=\alpha_r(a)(f(s))\\
&=[i_A(\alpha_r(a))f](s).
\end{align*}

We continue by showing that the bounded operators $i_A^\mr(a)$ and $i_G^\mr$ on $\crosprod$ are left centralizers of the Banach algebra $\crosprod$. To see this, let $a\in A$. Then, for $f,g \in C_c(G,A)$ and $s\in G$,
\begin{align*}
 [i_A(a)(f * g)](s) &= a \int_G f(r) \alpha_r(g(r^{-1}s)) \,dr \\
&= \int_G af(r) \alpha_r(g(r^{-1}s)) \,dr \\
&= [(i_A(a)f) * g](s).
\end{align*}
So $i_A(a)$ commutes with right multiplication in $C_c(G,A)$. Hence $i_A^\mr(a)(\qr(f)*\qr(g))=i_A^\mr(a)(\qr(f*g))=\qr(i_A(a)(f*g))=\qr([i_A(a)f]*g)=\qr(i_A(a)f)*\qr(g)=[i_A(a)^\mr\qr(f)]*\qr(g)$, for $f,g\in C_c(G,A)$.
From the density of $\qr(C_c(G,A)$ in $\crosprod$ and the boundedness of $i_A^\mr(a)$ it then follows that $i_A^\mr(a)$ is a left centralizer of $\crosprod$. As to the other case, let $r \in G$. Then, for $f,g \in C_c(G,A)$ and $s\in G$,
\begin{align*}
 [i_G(r)(f * g)](s) &= \alpha_r ([f * g](r^{-1}s)) \\
&= \alpha_r \left( \int_G f(t) \alpha_t(g(t^{-1}r^{-1}s)) \,dt \right)  \\
&= \int_G \alpha_r(f(t)) \alpha_{rt}(g((rt)^{-1}s)) \,dt \\
&= \int_G \alpha_r(f(r^{-1}t))\alpha_t(g(t^{-1}s)) \,dt \\
&= [(i_G(r)f) * g](s).
\end{align*}
So $i_G(r)$ commutes with right multiplication in $C_c(G,A)$. As for $i_A(a)$, it follows that $i_G^\mr(r)$ is a left centralizer of $\crosprod$.

Next, we will show that $i_G^\mr$ is strongly continuous. In view of the boundedness of $i_G^\mr$ on compact neighbourhoods of $e$, Corollary~\ref{c:strongly_continuous_at_e} implies that we only have to show strong continuity of $i_G^\mr$ in $e$ on a dense subset of $\crosprod$. By Corollary~\ref{c:image_of_tensor_product_dense_in_crossed_product}, $\qr(C_c(G) \otimes A)$ is dense in $\crosprod$, and so it is sufficient to show that $\sigmar(i_G(r_i)f - f) \to 0$ for all $f \in C_c(G) \otimes A$, whenever $r_i \to e$ in $G$. By linearity it is sufficient to consider only elements of the form $z \otimes a$ with $z \in C_c(G)$ and $a \in A$. Therefore, fix $z\otimes a$ and let $r_i \to e$. We may assume that the $r_i$ are all contained in a fixed compact set. It is the obvious that the net $(i_G(r_i)(z \otimes a))$ is likewise supported in a fixed compact set, so by Lemma~\ref{l:quotient_map_continuous_in_inductive_limit_topology_on_C_c(G,A)} it suffices to show that $[i_G(r_i)(z \otimes a)](s) - z(s)a \to 0$,
uniformly in $s$. Since
\begin{align*}
 \norm{[i_G(r_i)(z \otimes a)](s) - z(s)a}&=\norm{z(r_i^{-1}s)\alpha_{r_i}(a)-z(s)a}\\
&\leq \norm{z(r_i^{-1}s)\alpha_{r_i}(a)-z(r_i^{-1}s)a} + \norm{z(r_i^{-1}s)a-z(s)a}\\
&\leq \|z\|_\infty\norm{\alpha_{r_i}(a)-a} + \norm{z(r_i^{-1}s)-z(s)} \norm{a},
\end{align*}
this uniform convergence follows from the strong continuity of $\alpha$ and the uniform continuity of $z$.
Together with the discussion preceding the theorem, this concludes the proof that $(i_A^\mr, i_G^\mr)$ is a continuous covariant representation of $\dynsys$ on $\crosprod$.

If, in addition, $A$ has a bounded approximate left identity $(u_i)$, then, for $f\in C_c(G)\otimes A$, Lemma~\ref{l:i_j_A_continuous_in_inductive} shows that $i_A(u_i)f\to f$ in the inductive limit topology. As a consequence, $i_A(A)\cdot C_c(G)\otimes A$ is dense in $C_c(G)\otimes A$ in the inductive limit topology. By Lemma~\ref{l:quotient_map_continuous_in_inductive_limit_topology_on_C_c(G,A)}, $i_A^\mr(A)\cdot\qr(C_c(G)\otimes A)=\qr(i_A(A)\cdot C_c(G)\otimes A)$ is dense in $\qr(C_c(G)\otimes A)$. Since the latter is dense in $\crosprod$ by Corollary~\ref{c:image_of_tensor_product_dense_in_crossed_product}, $i_A^\mr$ is thus seen to be non-degenerate.
\end{proof}

The above Proposition~\ref{p:covariant_representation_on_crossed_product} is sufficient for the sequel in the case of general Banach algebra dynamical systems. In the involutive case, the left centralizer algebra alone is no longer sufficient, because of the lack of an involutive structure. In that case, we will use the double centralizer algebra, and in order to establish the result for the double centralizer algebra that will eventually be used, we first need the following right-sided version of part of the above theorem.

\begin{prop}\label{p:non_degenerate_anti_covariant_anti_representation_on_crossed_product}
Let $\dynsys$ be a Banach algebra dynamical system and let $\mr$ be a non-empty uniformly bounded class of continuous covariant representations. For $a \in A$ and $r \in G$, let $j_A(a)$ and $j_G(r)$ be as in \eqref{e:i_j_A_G^r_definition}. Then the maps
\[ j_A(a), \; j_G(r): (C_c(G,A), \sigmar) \to (C_c(G,A), \sigmar) \]
are bounded. Denote the corresponding bounded operators on $\crosprod$ by $j_A^\mr(a)$ and $j_G^\mr(r)$, respectively, determined by $j_A^\mr(a)(\qr(f))=\qr(j_A(a)(f))$, for all $f\in C_c(G,A)$, and by $j_G^\mr(r)(\qr(f))=\qr(j_G(r)f)$, for all $f\in C_c(G,A)$, respectively.

Then $j_A^\mr: A \to B(\crosprod)$ is a bounded anti-representation of $A$ in $\crosprod$, and $j_G^\mr: G\to B(\crosprod)$ is a strongly continuous anti-representation of $G$ in $\crosprod$.
The pair $(j_A^\mr, j_G^\mr)$ is anti-covariant in the sense that, for all $a \in A$ and all $r \in G$,
\[
j_A^\mr(\alpha_r(a)) = j_G^\mr(r)^{-1} j_A^\mr(a) j_G^\mr(r).
\]
The images $j_A^\mr(A)$ and $j_G^\mr(G)$ are contained in the right centralizer algebra $\mathcal M_r(\crosprod)$ of $\crosprod$, so we have
\begin{align*}
 j_A^\mr: A \to \mathcal M_r(\crosprod) \subset B(\crosprod), \\
 j_G^\mr: G \to \mathcal M_r(\crosprod) \subset B(\crosprod).
\end{align*}
 For the operator norm in $B(\crosprod)$ the estimates
\[ \norm{j_A^\mr(a)} \leq \sup_{\covrep \in \mr} \norm{\pi(a)} \leq \Cr\norm{a}, \]
where $a\in A$, and
\[\norm{j_G^\mr(r)} \leq \nur(r),\]
where $r\in G$, hold.

If, in addition, $A$ has a bounded approximate right identity, then $j_A^\mr$ is non-degenerate.
\end{prop}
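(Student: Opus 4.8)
The plan is to mirror the proof of Proposition~\ref{p:covariant_representation_on_crossed_product} throughout, replacing the left-sided identities of \eqref{e:intforms_of_i_j_A_G} by their right-sided counterparts $\intform(j_A(a)f)=\intform(f)\circ\pi(a)$ and $\intform(j_G(r)f)=\intform(f)\circ U_r$. First I would establish boundedness and the norm estimates: for $\covrep\in\mr$ and $f\in C_c(G,A)$ these identities give $\norm{\intform(j_A(a)f)}\leq\norm{\pi(a)}\,\sigma_{\covrep}(f)$ and $\norm{\intform(j_G(r)f)}\leq\nur(r)\,\sigma_{\covrep}(f)$; taking the supremum over $\mr$ yields $\normr{j_A(a)}\leq\sup_{\covrep\in\mr}\norm{\pi(a)}\leq\Cr\norm{a}$ and $\normr{j_G(r)}\leq\nur(r)$. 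Lemma~\ref{l:completion_associated_with_seminorm} then produces the bounded operators $j_A^\mr(a),j_G^\mr(r)\in B(\crosprod)$ with the same norms, satisfying the stated defining relations. Since $j_A$ and $j_G$ are anti-homomorphisms by Proposition~\ref{p:i_j_A_G_properties}, the density of $\qr(C_c(G,A))$ in $\crosprod$ immediately transports the anti-homomorphism identities to $j_A^\mr$ and $j_G^\mr$.

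Next I would verify the two structural algebraic facts on $C_c(G,A)$ and lift them by density. For the right centralizer property I would check that $j_A(a)$ and $j_G(r)$ commute with left multiplication, i.e.\ $j_A(a)(g*h)=g*(j_A(a)h)$ and $j_G(r)(g*h)=g*(j_G(r)h)$; the first reduces, using that each $\alpha_t$ is multiplicative together with $\alpha_t\circ\alpha_{t^{-1}s}=\alpha_s$, to an equality of the two integrands, and the second is a direct rewriting in which the factor $\Delta(r^{-1})$ simply passes through the integral. For the anti-covariance I would use that $j_G(r)^{-1}=j_G(r^{-1})$ (a consequence of $j_G$ being an anti-homomorphism) and compute $[j_G(r)^{-1}j_A(a)j_G(r)f](s)$ directly; the two modular factors $\Delta(r)\Delta(r^{-1})$ cancel and the homomorphism property $\alpha_{sr}=\alpha_s\circ\alpha_r$ delivers exactly $[j_A(\alpha_r(a))f](s)$. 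Boundedness of all operators involved, together with the density of $\qr(C_c(G,A))$, then promotes both identities to $\crosprod$, placing $j_A^\mr(A)$ and $j_G^\mr(G)$ in $\mathcal M_r(\crosprod)$.

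The strong continuity of $j_G^\mr$ I would obtain exactly as for $i_G^\mr$: by the uniform boundedness of $j_G^\mr$ on a compact neighbourhood of $e$ and Corollary~\ref{c:strongly_continuous_at_e}, it suffices to prove continuity at $e$ on the dense set $\qr(C_c(G)\otimes A)$ (Corollary~\ref{c:image_of_tensor_product_dense_in_crossed_product}), hence on elementary tensors $z\otimes a$. For $r_i\to e$ lying in a fixed compact set, the supports of $j_G(r_i)(z\otimes a)$ stay in a fixed compact set, so by Lemma~\ref{l:quotient_map_continuous_in_inductive_limit_topology_on_C_c(G,A)} it is enough to show $[j_G(r_i)(z\otimes a)](s)-z(s)a=(\Delta(r_i^{-1})z(sr_i^{-1})-z(s))a\to 0$ uniformly in $s$, which follows from the continuity of $\Delta$ at $e$ and the uniform continuity of $z$ (Lemma~\ref{l:uniformly_continuous}).

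Finally, for the non-degeneracy of $j_A^\mr$ when $A$ has a bounded approximate right identity $(u_i)$, I would argue as follows. By the right-sided part of Lemma~\ref{l:i_j_A_continuous_in_inductive}, $j_A(u_i)f\to f$ in the inductive limit topology for every $f\in C_c(G,A)$; applying Lemma~\ref{l:quotient_map_continuous_in_inductive_limit_topology_on_C_c(G,A)} gives $j_A^\mr(u_i)\qr(f)=\qr(j_A(u_i)f)\to\qr(f)$ in $\crosprod$. Thus each $\qr(f)$ lies in $\overline{j_A^\mr(A)\cdot\crosprod}$, and since $\qr(C_c(G,A))$ is dense in $\crosprod$ the image $j_A^\mr(A)\cdot\crosprod$ is dense, i.e.\ $j_A^\mr$ is non-degenerate. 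The point where the left/right asymmetry genuinely enters --- and the step I expect to be the main obstacle --- is precisely this last one: unlike the left case, where $[i_A(u_i)(z\otimes a)](s)=z(s)u_ia$ converges uniformly for free, here $[j_A(u_i)f](s)=f(s)\alpha_s(u_i)$ involves the $s$-dependent element $\alpha_s(u_i)$, so the required uniform convergence over the (compact) support is not automatic and rests on the delicate equicontinuity estimate of Lemma~\ref{l:approximation_on_compacta_involving_alpha_and_approximate_right_identity}. This is exactly why a bounded approximate right identity of $A$, rather than a left one, is the correct hypothesis.
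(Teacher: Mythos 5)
Your proposal is correct and takes essentially the same approach as the paper: the paper's own proof of this proposition consists of the single remark that it is similar to the proof of Proposition~\ref{p:covariant_representation_on_crossed_product} with details omitted, and your write-up supplies exactly those details (boundedness via the right-sided identities in \eqref{e:intforms_of_i_j_A_G} and Lemma~\ref{l:completion_associated_with_seminorm}, the right-centralizer and anti-covariance computations lifted by density, strong continuity via Corollary~\ref{c:strongly_continuous_at_e} and Lemma~\ref{l:quotient_map_continuous_in_inductive_limit_topology_on_C_c(G,A)}, and non-degeneracy via the right-sided part of Lemma~\ref{l:i_j_A_continuous_in_inductive}). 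Your closing observation also correctly pinpoints why a bounded approximate \emph{right} identity is the appropriate hypothesis, namely that the convergence $j_A(u_i)f\to f$ rests on Lemma~\ref{l:approximation_on_compacta_involving_alpha_and_approximate_right_identity}.
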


\begin{proof}
The proof is similar to the proof of the corresponding statements in Proposition~\ref{p:covariant_representation_on_crossed_product} and the details are therefore omitted.
\end{proof}

\begin{prop}\label{p:covariant_homomorphisms_into_double_centralizer_algebra}
Let $\dynsys$ be a Banach algebra dynamical system and let $\mr$ be a non-empty uniformly bounded class of continuous covariant representations of continuous covariant representations. For $a \in A$ and $r \in G$, let $i_A^\mr(a)$ and $i_G^\mr(r)$ be as in Proposition~\ref{p:covariant_representation_on_crossed_product}, and let $j_A^\mr(a)$ and $j_G^\mr(r)$ be as in Proposition~\ref{p:non_degenerate_anti_covariant_anti_representation_on_crossed_product}. Then $((i_A^\mr(a), j_A^\mr(a))$ and $(i_G^\mr(r), j_G^\mr(r))$ are both double centralizers of $\crosprod$, and we have
\[
 \norm{(i_A^\mr(a), j_A^\mr(a))} \leq \sup_{\covrep \in \mr} \norm{\pi(a)} \leq \Cr\norm{a},
\]
and
\[
 \norm{(i_G^\mr(r), j_G^\mr(r))} \leq \nur(r).
\]
Furthermore, the maps $a \mapsto (i_A^\mr(a), j_A^\mr(a))$ and $r \mapsto (i_G^\mr(r), j_G^\mr(r))$ are homomorphisms of $A$ into $\mathcal M (\crosprod)$ and of $G$ into $\mathcal M (\crosprod)$, respectively, and
the pair $((i_A^\mr, j_A^\mr), (i_G^\mr, j_G^\mr))$ is covariant in the sense that
\[ (i_A^\mr(\alpha_r(a)), j_A^\mr(\alpha_r(a)) ) = (i_G^\mr(r), j_G^\mr(r))\cdot (i_A^\mr(a), j_A^\mr(a))\cdot (i_G^\mr(r), j_G^\mr(r))^{-1}, \]
for all $a \in A$ and all $r \in G$.

Moreover, if $\dynsys$ and $\mr$ are involutive, then
\[ (i_A^\mr, j_A^\mr): A \to \mathcal M(\crosprod) \]
is an involutive homomorphism, and $(i_G^\mr(r), j_G^\mr(r))^* = (i_G^\mr(r^{-1}), j_G^\mr(r^{-1}))$, for all $r\in G$.
\end{prop}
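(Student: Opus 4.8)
The plan is to verify each assertion by reducing, via the density of $\qr(C_c(G,A))$ in $\crosprod$ and continuity, to an identity on $C_c(G,A)$, and then invoking the facts already recorded in Propositions~\ref{p:covariant_representation_on_crossed_product} and~\ref{p:non_degenerate_anti_covariant_anti_representation_on_crossed_product}. Since $i_A^\mr(a)$ and $i_G^\mr(r)$ are already known to be left centralizers, and $j_A^\mr(a)$ and $j_G^\mr(r)$ right centralizers, the only thing left to check for the double centralizer property is the compatibility condition $c * i_A^\mr(a)(d) = j_A^\mr(a)(c) * d$ for all $c,d\in\crosprod$, and likewise with $a$ replaced by $r$. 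By density and boundedness this amounts to the identities $f * (i_A(a)g) = (j_A(a)f)*g$ and $f*(i_G(r)g) = (j_G(r)f)*g$ for $f,g\in C_c(G,A)$. The first is immediate from the definitions in \eqref{e:i_definitions} and the formula for the twisted convolution, since evaluating at $s\in G$ both sides equal $\int_G f(t)\alpha_t(a)\alpha_t(g(t^{-1}s))\,dt$. The second requires, after expanding, a change of variable $t\mapsto tr$ in the group integral; the Jacobian factor this produces is exactly cancelled by the factor $\Delta(r^{-1})$ occurring in the definition of $j_G(r)$, so I expect this modular-function bookkeeping to be the first place where care is needed.

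The norm estimates are then immediate: the norm on the double centralizer algebra is $\max(\norm{L},\norm{R})$, and the bounds for $i_A^\mr(a)$, $j_A^\mr(a)$ and for $i_G^\mr(r)$, $j_G^\mr(r)$ recorded in the two cited propositions are identical in each case, so the maximum inherits the same bound. For the homomorphism property I would use that multiplication in the double centralizer algebra is $(L_1,R_1)(L_2,R_2)=(L_1L_2,R_2R_1)$: since $i_A^\mr$ is a homomorphism while $j_A^\mr$ is an anti-homomorphism, the order reversal built into the second slot exactly compensates, giving $(i_A^\mr(a),j_A^\mr(a))(i_A^\mr(b),j_A^\mr(b))=(i_A^\mr(ab),j_A^\mr(ab))$, and similarly for $G$. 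For the covariance relation I would combine the covariance of $(i_A^\mr,i_G^\mr)$ from Proposition~\ref{p:covariant_representation_on_crossed_product} with the anti-covariance of $(j_A^\mr,j_G^\mr)$ from Proposition~\ref{p:non_degenerate_anti_covariant_anti_representation_on_crossed_product}, together with the fact that $(L,R)^{-1}=(L^{-1},R^{-1})$; computing the triple product slot by slot then yields $i_A^\mr(\alpha_r(a))$ in the left slot and $j_A^\mr(\alpha_r(a))$ in the right slot, as required.

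Finally, for the involutive case, recall that the involution on the double centralizer algebra sends $(L,R)$ to $(R^*,L^*)$, where $L^*(c)=(L(c^*))^*$. Thus $(i_A^\mr(a),j_A^\mr(a))^*=(j_A^\mr(a)^*,i_A^\mr(a)^*)$, and to identify this with $(i_A^\mr(a^*),j_A^\mr(a^*))$ it suffices to prove the single identity $j_A^\mr(a)^*=i_A^\mr(a^*)$; the companion identity $i_A^\mr(a)^*=j_A^\mr(a^*)$ then follows by applying $*$ and replacing $a$ by $a^*$, using $(L^*)^*=L$. By density this reduces to showing $(j_A(a)(f^*))^*=i_A(a^*)f$ in $C_c(G,A)$, which I would verify pointwise by substituting the formula $f^*(s)=\Delta(s^{-1})\alpha_s(f(s^{-1})^*)$ from \eqref{e:involution_definition} twice and using that each $\alpha_s$ is involutive: the two modular factors cancel and $\alpha_s\alpha_{s^{-1}}=\id$ collapses the expression to $a^*f(s)$. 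The statement $(i_G^\mr(r),j_G^\mr(r))^*=(i_G^\mr(r^{-1}),j_G^\mr(r^{-1}))$ is handled the same way, reducing to $(j_G(r)(f^*))^*=i_G(r^{-1})f$; here the modular bookkeeping is heavier, since three occurrences of $\Delta$ must be tracked before they collapse. This computation, together with the change-of-variable verifying the double-centralizer compatibility of the $G$-pair, is where I expect the genuine work to lie, the remaining assertions being formal consequences of the product, norm, inverse, and involution formulae on the double centralizer algebra.
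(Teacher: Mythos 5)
Your proposal is correct and follows essentially the same route as the paper: reduction by density and continuity to the pointwise identities $f*(i_A(a)g)=(j_A(a)f)*g$ and $f*(i_G(r)g)=(j_G(r)f)*g$ on $C_c(G,A)$ (with exactly the same modular-function cancellation against the $\Delta(r^{-1})$ in $j_G(r)$), the formal multiplication and inverse rules in $\mathcal M(\crosprod)$ for the homomorphism and covariance claims, and the same pointwise computations using \eqref{e:involution_definition} for the involutive statements. The only cosmetic difference is that the paper verifies both identities $j_A(a)^*=i_A(a^*)$ and $i_A(a)^*=j_A(a^*)$ by direct computation, whereas you deduce the second from the first by taking adjoints and replacing $a$ by $a^*$ --- a legitimate small economy, mirroring the adjoint trick the paper itself uses for the group elements.
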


\begin{proof}
Let $a\in A$ and suppose $f,g \in C_c(G,A)$. Then the computation, for $s\in G$,
\begin{align*}
 f * (i_A(a)g) (s) &= \int_G f(r) \alpha_r(i_A(a)g(r^{-1}s)) \, dr \\
&= \int_G f(r) \alpha_r(a) \alpha_r(g(r^{-1}s)) \,dr \\
&= (j_A(a)f) * g (s)
\end{align*}
shows that $(i_A(a), j_A(a))$ is a double centralizer of $C_c(G,A)$. By continuity and density, the same holds for $(i_A^\mr(a), j_A^\mr(a))$ and $\crosprod$. Similarly, if $r,s\in G$ and $f,g\in C_c(G,A)$, then
\begin{align*}
 f * (i_G(r)g) (s) &= \int_G f(t) \alpha_t((i_G(r)g)(t^{-1}s)) \, dt \\
&= \int_G f(t) \alpha_t ( \alpha_r(g(r^{-1}t^{-1}s))) \,dt \\
&= \int_G f(t) \alpha_{tr}( g((tr)^{-1}s)) \, dt \\
&= \int_G \Delta(r^{-1}) f(tr^{-1}) \alpha_t ( g(t^{-1}s)) \, dt \\
&= \int_G (j_G(r)f) (t) \alpha_t(g(t^{-1}s)) \, dt\\
&= (j_G(r)f) * g (s)
\end{align*}
implies that $(i_G^\mr(r), j_G^\mr(r))$ is a double centralizer of $\crosprod$.

The fact that the maps are homomorphisms and the covariance property follow directly from the corresponding statements in Proposition~\ref{p:covariant_representation_on_crossed_product} and Proposition~\ref{p:non_degenerate_anti_covariant_anti_representation_on_crossed_product}, and the definition of the inverse and the multiplication in the double centralizer algebra.

As to the final statement, suppose that $\dynsys$ is involutive, and that $\mathcal{R}$ consists of involutive representations. To show that the homomorphism $(i_A, j_A)$ from $A$ into $\mathcal M(\crosprod)$ is involutive, we have to show that, for $a\in A$, $(i_A^\mr(a),j_A^\mr(a))^* = (i_A^\mr(a^*), j_A^\mr(a^*))$, i.e., that  $(j_A^\mr(a)^*,i_A^\mr(a)^*)=(i_A^\mr(a^*), j_A^\mr(a^*))$. Recalling the definitions \eqref{e:involution_definition} and \eqref{e:i_j_A_G^r_definition}, we find, for $f\in C_c(G,A)$ and $s\in G$, that
\begin{align*}
\left[ j_A(a)^* f\right](s) &= \left[j_A(a)f^*\right]^*(s) \\
&= \Delta(s^{-1}) \alpha_s\left[\left\{(j_A(a)f^*)(s^{-1})\right\}^*\right]\\
&= \Delta(s^{-1}) \alpha_s\left[\left\{f^*(s^{-1})\alpha_{s^{-1}}(a)\right\}^*\right]\\
&= \Delta(s^{-1}) \alpha_s\left[\left\{\Delta(s)\alpha_{s^{-1}}(f(s)^*)\alpha_{s^{-1}}(a)\right\}^*\right] \\
&= a^* f(s)\\
&= [i_A(a^*)f](s).
\end{align*}
and
\begin{align*}
\left[i_A(a)^* f\right](s) &= \left[i_A(a)f^*\right]^*(s)\\
&=\Delta(s^{-1})\alpha_s\left[\left\{(i_A(a)f^*)(s^{-1})\right\}^*\right]\\
&=\Delta(s^{-1})\alpha_s\left[\left\{af^*(s^{-1})\right\}^*\right]\\
&=\Delta(s^{-1})\alpha_s\left[\left\{a\Delta(s)\alpha_{s^{-1}}(f(s)^*)\right\}^*\right]\\
&=f(s)\alpha_s(a^*)\\
&=\left[j_A(a^*)f\right](s),
\end{align*}
By continuity and density, this implies that $(j_A^\mr(a)^*,i_A^\mr(a)^*)=(i_A^\mr(a^*), j_A^\mr(a^*))$, as desired.

A similar unwinding of the definitions establishes, by continuity and density, that $i_G^\mr(r)^* = j_G^\mr(r^{-1})$, for all $r\in G$. Taking adjoints, this implies $j_G^\mr(r)^*=i_G^\mr(r^{-1})$, hence $(i_G^\mr(r), j_G^\mr(r))^*=(j_G^\mr(r)^*, (i_G^\mr(r)^*)=(i_G^\mr(r^{-1}), j_G^\mr(r^{-1}))$, for all $r\in G$.
\end{proof}

\section{Representations: from $\crosprod$ to $\dynsys$}\label{sec:from_crosprod_to_dynsys}

As already indicated in the previous section, Theorem~\ref{t:summary_for_centralizers} and  Proposition~\ref{p:covariant_representation_on_crossed_product} provide a means to generate a covariant representation of $\dynsys$ from a non-degenerate bounded representation of $\crosprod$, as follows. If $A$ has a bounded approximate left identity, then the same holds for $\crosprod$, by Corollary~\ref{c:approximate_identities}, and hence any non-degenerate bounded representation $T$ of $\crosprod$ yields a bounded representation $\overline T$ of $\mathcal M_l(\crosprod)$, by Theorem~\ref{t:summary_for_centralizers}. Since, by Proposition~\ref{p:covariant_representation_on_crossed_product}, the images $i_A^\mr(A)$ and $i_G^\mr(G)$ are contained in $\mathcal M_l(\crosprod)$, the pair of maps $(\overline T\circ i_A^\mr, \overline T\circ i_G^\mr)$ is meaningfully defined and will then be a covariant representation of $\dynsys$, since the covariance requirement is automatically satisfied as a consequence of the covariance property of $(i_A^\mr, i_G^\mr)$, the latter being part of Proposition~\ref{p:covariant_representation_on_crossed_product}. Some natural questions that arise are, e.g., whether this covariant representation is continuous, and, if so, whether it is $\mr$-continuous. We will now investigate these and related matters, and incorporate some of the results from Section~\ref{sec:from_dynsys_to_crosprod} (the passage in the other direction, from $\mr$-continuous covariant representations of $\dynsys$ to bounded representations of $\crosprod$) in the process. After that, the proofs of our main results in Section~\ref{sec:general_correspondence} will be a mere formality.

Recall from Definition~\ref{d:infimum_of_bounds} and Corollary~\ref{c:approximate_identities} that $M_l^\mr$ denotes the infimum of the upper bounds of the approximate left identities of $\crosprod$, with estimate $M_l^\mr \leq \Cr M_l^A N^\mr$, where $M_l^A$ denotes the infimum of the upper bounds of the approximate left identities of $A$.

\begin{prop}\label{p:from_crossed_product_to_dyn_sys}
Let $\dynsys$ be a Banach algebra dynamical system, where $A$ has a bounded approximate left identity, and let $\mr$ be a non-empty uniformly bounded class of continuous covariant representations. Let $(i_A^\mr, i_G^\mr)$ be the continuous covariant representation of $\dynsys$ on $\crosprod$, as in Proposition~\ref{p:covariant_representation_on_crossed_product}.

Suppose that $T$ is a non-degenerate bounded representation of $\crosprod$ in a Banach space $X$, and let $\overline T$ be the associated bounded representation of $\mathcal M_l(\crosprod)$ in $X$, as in Theorem~\ref{t:summary_for_centralizers}. Then the pair $(\overline{T} \circ i_A^\mr, \overline{T} \circ i_G^\mr)$ is a non-degenerate continuous covariant representation of $\dynsys$ in $X$.
For the operator norm on the bounded operators on the representation space the estimates
\begin{align*}
\norm{\left(\overline{T} \circ i_A^\mr\right)(a)} \leq M_l^\mr \norm{T} \sup_{\covrep\in \mr} \norm{\pi(a)} \leq M_l^\mr \norm{T} \Cr \norm{a},
\end{align*}
where $a \in A$, and
\begin{equation*}
\norm{\left(\overline{T} \circ i_G^\mr\right)(r)} \leq M_l^\mr \norm{T}  \nur(r),
\end{equation*}
where $r\in G$, hold.

If a closed subspace of $X$ is invariant for $T$, it is invariant for $\overline{T} \circ i_A^\mr$ and $\overline{T} \circ i_G^\mr$, and if $Y$ is a Banach space, $S: \crosprod \to B(Y)$ a representation and $\Phi \in B(X,Y)$ intertwines $T$ and $S$, then $\Phi$ intertwines $(\overline{T} \circ i_A^\mr, \overline{T} \circ i_G^\mr)$ and $(\overline{S} \circ i_A^\mr, \overline{S} \circ i_G^\mr)$.

If, in addition, $\dynsys$, $\mathcal{R}$, and $T$ are involutive, then $(\overline{T} \circ i_A^\mr, \overline{T} \circ i_G^\mr)$ is involutive.

Moreover, if, in the not necessarily involutive case, $\covrep$ is an non-degenerate $\mr$-continuous covariant representation of $\dynsys$, with corresponding non-degenerate bounded representation $(\intform)^\mr$ of $\crosprod$, then
\begin{equation}\label{e:reconstruction_formula}
\left( \overline{(\intform)^\mr} \circ i_A^\mr, \overline{(\intform)^\mr} \circ i_G^\mr \right) = (\pi,U).
\end{equation}
\end{prop}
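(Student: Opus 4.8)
The plan is to verify the asserted properties in turn, throughout exploiting the compatibility relation \eqref{e:compatibility}, namely $\overline{T}(L) \circ T(c) = T(L(c))$ for $L \in \mathcal M_l(\crosprod)$ and $c \in \crosprod$, to push every convergence question down to the representation $T$ on $X$, where it is under control. Covariance is immediate: since $\overline{T}: \mathcal M_l(\crosprod) \to B(X)$ is a unital homomorphism (Theorem~\ref{t:summary_for_centralizers}) and each $i_G^\mr(r)$ is invertible in $\mathcal M_l(\crosprod)$ with inverse $i_G^\mr(r^{-1})$, applying $\overline{T}$ to the covariance identity $i_A^\mr(\alpha_r(a)) = i_G^\mr(r)\,i_A^\mr(a)\,i_G^\mr(r)^{-1}$ from Proposition~\ref{p:covariant_representation_on_crossed_product} gives $(\overline{T} \circ i_A^\mr)(\alpha_r(a)) = (\overline{T} \circ i_G^\mr)(r)\,(\overline{T} \circ i_A^\mr)(a)\,(\overline{T} \circ i_G^\mr)(r)^{-1}$. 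The two norm estimates follow at once by combining $\norm{\overline{T}} \leq M_l^\mr \norm{T}$ (Theorem~\ref{t:summary_for_centralizers}, with $M_l^{\crosprod} = M_l^\mr$) with the bounds $\norm{i_A^\mr(a)} \leq \sup_{\covrep \in \mr}\norm{\pi(a)} \leq \Cr\norm{a}$ and $\norm{i_G^\mr(r)} \leq \nur(r)$ of Proposition~\ref{p:covariant_representation_on_crossed_product}; in particular $\overline{T} \circ i_A^\mr$ is norm bounded, which is the continuity of the algebra part.

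For strong continuity of $\overline{T} \circ i_G^\mr$ I would invoke Corollary~\ref{c:strongly_continuous_at_e}: the estimate just obtained shows $\overline{T} \circ i_G^\mr$ is uniformly bounded on any compact neighbourhood of $e$ (as $\nur$ is bounded on compacta), so it suffices to check continuity at $e$ on the dense set $T(\crosprod)\cdot X$. On a vector $T(c)y$, relation \eqref{e:compatibility} gives $(\overline{T} \circ i_G^\mr)(r)\,T(c)y = T(i_G^\mr(r)(c))y$, and since $i_G^\mr$ is strongly continuous (Proposition~\ref{p:covariant_representation_on_crossed_product}) with $i_G^\mr(e)=\id$, one has $i_G^\mr(r)(c) \to c$ in $\crosprod$ as $r \to e$, whence $T(i_G^\mr(r)(c))y \to T(c)y$ by continuity of $T$. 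For non-degeneracy of $\overline{T} \circ i_A^\mr$ I would take a bounded approximate left identity $(u_i)$ of $A$ and apply Remark~\ref{r:strong_continuity_approx_identity}: again by \eqref{e:compatibility}, $(\overline{T} \circ i_A^\mr)(u_i)\,T(c)y = T(i_A^\mr(u_i)(c))y$, and since $i_A^\mr$ is non-degenerate on $\crosprod$ (Proposition~\ref{p:covariant_representation_on_crossed_product}), Remark~\ref{r:strong_continuity_approx_identity} yields $i_A^\mr(u_i)(c) \to c$; thus $(\overline{T} \circ i_A^\mr)(u_i) \to \id_X$ strongly on the dense set $T(\crosprod)\cdot X$, and hence everywhere by the uniform bound and a $3\eps$-argument.

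The statements about invariant subspaces and intertwining operators I would read off directly from Theorem~\ref{t:summary_for_centralizers}: a closed subspace is invariant for $T$ if and only if it is invariant for $\overline{T}$, hence for each $\overline{T}(L)$, in particular for $\overline{T}(i_A^\mr(a))$ and $\overline{T}(i_G^\mr(r))$; likewise $\Phi$ intertwines $T$ and $S$ iff it intertwines $\overline{T}$ and $\overline{S}$, so it intertwines $\overline{T} \circ i_A^\mr$ with $\overline{S} \circ i_A^\mr$ and $\overline{T} \circ i_G^\mr$ with $\overline{S} \circ i_G^\mr$. For the involutive case the left centralizer algebra alone is insufficient, so I would route through the double centralizer algebra, writing $\overline{T} \circ i_A^\mr = (\overline{T} \circ \phi_l) \circ (i_A^\mr, j_A^\mr)$ and $\overline{T} \circ i_G^\mr = (\overline{T} \circ \phi_l) \circ (i_G^\mr, j_G^\mr)$. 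Combining that $\overline{T} \circ \phi_l$ is involutive (Theorem~\ref{t:summary_for_centralizers}) with the facts from Proposition~\ref{p:covariant_homomorphisms_into_double_centralizer_algebra} that $(i_A^\mr, j_A^\mr)$ is an involutive homomorphism and $(i_G^\mr(r), j_G^\mr(r))^* = (i_G^\mr(r^{-1}), j_G^\mr(r^{-1}))$, the first shows $\overline{T} \circ i_A^\mr$ is involutive, while the second gives $((\overline{T} \circ i_G^\mr)(r))^* = (\overline{T} \circ i_G^\mr)(r^{-1}) = ((\overline{T} \circ i_G^\mr)(r))^{-1}$, i.e.\ unitarity; as $X$ is then a Hilbert space, the pair is an involutive covariant representation.

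Finally, for the reconstruction formula I would take $T = (\intform)^\mr$, which is non-degenerate because $\covrep$ is (Theorem~\ref{t:summary_from_dyn_sys_to_crossed_product}), so $\intform$ is non-degenerate and the vectors $\intform(f)x$ span a dense subspace of $X$. Combining \eqref{e:compatibility} with \eqref{e:intforms_of_i_j_A_G} gives, for $f \in C_c(G,A)$,
\[
(\overline{(\intform)^\mr} \circ i_A^\mr)(a) \circ \intform(f) = \intform(i_A(a)f) = \pi(a) \circ \intform(f),
\]
and similarly $(\overline{(\intform)^\mr} \circ i_G^\mr)(r) \circ \intform(f) = \intform(i_G(r)f) = U_r \circ \intform(f)$. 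Since in each identity both sides are bounded operators agreeing on the dense set $\intform(C_c(G,A))\cdot X$, they coincide, yielding $\overline{(\intform)^\mr} \circ i_A^\mr = \pi$ and $\overline{(\intform)^\mr} \circ i_G^\mr = U$. The main obstacle throughout is that $\overline{T}$ is only norm continuous, so the strong continuity of $\overline{T} \circ i_G^\mr$ and the non-degeneracy of $\overline{T} \circ i_A^\mr$ cannot be obtained by naive composition; the compatibility relation \eqref{e:compatibility} is the essential device that transfers these questions to the controlled representation $T$ on $X$.
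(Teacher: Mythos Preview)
Your proof is correct and follows essentially the same approach as the paper's. The only minor difference is in the non-degeneracy argument: you deduce $i_A^\mr(u_i)(c) \to c$ from the non-degeneracy of $i_A^\mr$ via Remark~\ref{r:strong_continuity_approx_identity}, whereas the paper works on the dense subspace $T(\qr(C_c(G)\otimes A))\cdot X$ and invokes Lemma~\ref{l:i_j_A_continuous_in_inductive} together with Lemma~\ref{l:quotient_map_continuous_in_inductive_limit_topology_on_C_c(G,A)} directly; both routes are valid and lead to the same conclusion.
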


Although it does not follow from the estimates for the operator norm in the theorem, if all elements of $\mr$ are non-degenerate, then it is (in analogy with the continuous covariant representation $(i_A^\mr, i_G^\mr)$ of $\dynsys$ in $\crosprod$), actually true that $(\overline{T} \circ i_A^\mr, \overline{T} \circ i_G^\mr)$ is $\mr$-continuous, see Theorem~\ref{t:r_affiliated_from_crossed_product_to_dyn_sys}.

Note that the final statement of the theorem implies the injectivity of the assignment $\covrep\to (\intform)^\mr$ on the non-degenerate $\mr$-continuous covariant representations if $A$ has a bounded approximate left identity, as was already announced following Definition~\ref{d:affiliated}.

\begin{proof}
Let $T$ be a non-degenerate bounded representation of $\crosprod$ in the Banach space $X$. As already remarked preceding the theorem, the definitions
\[
\pi := \overline{T}\circ i_A^\mr \quad \textup{and} \quad U := \overline{T}\circ i_G^\mr
\]
are meaningful and provide a covariant representation $\covrep$ of $\dynsys$.
We show that it has the properties as claimed, and start with the bounds for $\norm{\pi}$ and $\norm{U_r}$, for $r\in G$. Let $\eps > 0$, then $\crosprod$ has an $(M_l^\mr + \eps)$-bounded approximate left identity. Since Theorem~\ref{t:summary_for_centralizers} and Proposition~\ref{p:covariant_representation_on_crossed_product} provide a bound for $\norm{\overline T}$, $\norm{i_A^\mr(a)}$, and $\norm{i_G^\mr (r)}$, we have, for $a\in A$,
\begin{align*}
\norm{\pi(a)} &\leq \norm{\overline{T}} \norm{i_A^\mr(a)} \\
&\leq (M_l^\mr + \eps) \norm{T}  \sup_{(\rho,V)\in \mr} \norm{\rho(a)}\\
&\leq (M_l^\mr + \eps) \norm{T}  \Cr \norm{a}
\end{align*}
and, for $r\in G$,
\[
\norm{U_r} \leq \norm{\overline{T}} \norm{i_G^\mr(r)} \leq (M_l^\mr + \eps)\norm{T}  \nur(r).
\]
Since $\eps>0$ was arbitrary, this establishes the estimates in the theorem.

We have to prove that $\pi$ is non-degenerate and that $U$ is strongly continuous. Starting with $\pi$, by Remark~\ref{r:strong_continuity_approx_identity} it has to be shown that $\pi(u_i)x \to x$ for all $x \in X$, where $(u_i)$ is a bounded approximate left identity of $A$. By the boundedness of $\pi$, which we already established, and the boundedness of $(u_i)$, it is sufficient to establish this for $x$ in a dense subset of $X$. Now since $T$ is non-degenerate and $\qr(C_c(G)\otimes A)$ is dense in $\crosprod$ by Corollary~\ref{c:image_of_tensor_product_dense_in_crossed_product}, $T(\qr(C_c(G)\otimes A))\cdot X$ is dense in $X$. So let $x \in X$ and $f \in C_c(G) \otimes A$, then by \eqref{e:compatibility} in Theorem~\ref{t:summary_for_centralizers},
\begin{align*}
\pi(u_i) T(\qr(f))x &= \overline{T}(i_A^\mr(u_i))T(\qr(f))x\\
 &= T[i_A^\mr(u_i)(\qr(f))]x \\
 &= T[\qr(i_A(u_i)f)]y\\
 &\to T(\qr(f))x,
\end{align*}
where the last step is by Lemma~\ref{l:i_j_A_continuous_in_inductive}, Lemma~\ref{l:quotient_map_continuous_in_inductive_limit_topology_on_C_c(G,A)} and the boundedness of $T$.

Now we turn to the strong continuity of $U$, Since we have already established that $\norm{U_r}\leq\nur(r)$, for $r\in G$, and $\nur$ is bounded on compact sets, Corollary~\ref{c:strongly_continuous_at_e} shows that it is sufficient to show strong continuity of $U$ in $e$ when acting on a dense subset of $X$. For this set we choose $T(\crosprod) \cdot X$, which is dense by the non-degeneracy of $T$, and then by linearity it is sufficient to show strong continuity in $e$ when acting on elements of the form $T(c)y$, where $c \in \crosprod$, and $y\in X$. So let $x = T(c)y \in X$, and let $r_i\to e$.  Then by \eqref{e:compatibility} in Theorem~\ref{t:summary_for_centralizers} we find
\[
U_{r_i} x = \overline{T}(i_G^\mr(r_i))T(c)y= T(i_G^\mr(r_i)(c))y.
\]
By Proposition~\ref{p:covariant_representation_on_crossed_product}, $i_G^\mr$ is strongly continuous. Hence, by the continuity of $T$,
\[
U_{r_i} x = T(i_G^\mr(r_i)(c))y \to T(c)y=x,
\]
as required.

Suppose $Y$ is a closed invariant subspace of $X$ for $T$. By Theorem~\ref{t:summary_for_centralizers} $\overline{T}(L)y \in Y$ for all $y \in Y$ and $L \in \mathcal{M}_l(\crosprod)$. Applying this with $L = i_A(a)$ and $L = i_G(r)$, for $a \in A$ and $r \in G$, shows that $Y$ is invariant for $\overline{T} \circ i_A$ and $\overline{T} \circ i_G$.

If $Y$ is a Banach space, $S$ a non-degenerate bounded representation of $\crosprod$ in $Y$ and $\Phi$ a bounded intertwining operator for $T$ and $S$, then it follows from Theorem~\ref{t:summary_for_centralizers} that $\Phi \circ \overline{T}(L) = \overline{S}(L) \circ \Phi$ for all $L \in \mathcal{M}_l(\crosprod)$. Again applying this with $L = i_A(a)$ and $L = i_G(r)$, for $a \in A$ and $r \in G$, shows that $\Phi \circ [\overline{T} \circ i_A](a) = [\overline{S} \circ i_A](a) \circ \Phi$ and $\Phi \circ [\overline{T} \circ i_G](r) = [\overline{S} \circ i_G](r) \circ \Phi$.

Considering the statement on involutions, suppose that, in addition, $\dynsys$ and $\mathcal{R}$ are both involutive. Let $T$ be an involutive representation of $\crosprod$. By Proposition~\ref{p:covariant_homomorphisms_into_double_centralizer_algebra} the homomorphism $(i_A^\mr, j_A^\mr):A\to \mathcal M(\crosprod)$ is involutive and Theorem~\ref{t:summary_for_centralizers} shows that $\overline{T} \circ \phi_l$ is involutive. Combining these, we obtain that
\[
\pi=\overline{T} \circ i_A^\mr = \overline{T} \circ \left[\phi_l \circ (i_A^\mr, j_A^\mr)\right]=\left[\overline{T} \circ \phi_l\right] \circ (i_A^\mr, j_A^\mr)
\]
is an involutive representation of $A$. Finally, if $r\in G$, then using the involutive property of $\overline{T} \circ \phi_l$ again, as well as Proposition~\ref{p:covariant_homomorphisms_into_double_centralizer_algebra}, we see that
\begin{align*}
U_r^*&=\left[\overline T(i_G^\mr(r))\right]^*\\
&=\left[\left(\overline T\circ\phi_l\right)\left((i_G^\mr(r), j_G^\mr(r))\right)\right]^* \\
&=\left(\overline T\circ\phi_l\right)\left[ (i_G^\mr(r), j_G^\mr(r))^* \right] \\
&=\left(\overline T\circ\phi_l\right)\left[ (j_G^\mr(r)^*, i_G^\mr(r)^*)\right] \\
&=\left(\overline T\circ\phi_l\right) [(i_G^\mr(r^{-1}), j_G^\mr(r^{-1}))] \\
&=\overline T(i_G^\mr(r^{-1})) = U_{r^{-1}} = U_r^{-1}.
\end{align*}
 Hence $U$ is a unitary representation of $G$, and this completes the proof that $(\overline{T} \circ i_A^\mr, \overline{T} \circ i_G^\mr)$ is involutive.

To conclude with, we consider the final statement on the recovery of an $\mr$-continuous covariant representation $\covrep$ from $(\intform)^\mr$.
Starting with $\pi$, let $a\in A$. Then the compatibility equation \eqref{e:compatibility} in Theorem~\ref{t:summary_for_centralizers}, when applied with $L$ replaced with $i_A^\mr(a)$ and $a$ replaced with $\qr(f)$, for $f\in C_c(G,A)$, yields
\begin{align}\label{e:retrieving_pi_1}
\overline{(\intform)^\mr}(i_A^\mr(a))\circ(\intform)^\mr(\qr(f)) &= (\intform )^\mr (i_A^\mr(a)\qr(f)) \notag \\
&= \intform(i_A(a)f).
\end{align}
Take an element $x \in X$ of the form $x = \intform(f) y$, with $f\in C_c(G,A)$. Using \eqref{e:intforms_of_i_j_A_G}, we find that
\[
\pi(a)x = \pi(a) \circ \intform(f)y = \intform(i_A(a)f)y.
\]
Combining this with both sides of \eqref{e:retrieving_pi_1} acting on $y$, we see that, for such $x$,
\begin{equation}\label{e:retrieving_pi_2}
\overline{(\intform)^\mr}(i_A^\mr(a))x=\pi(a)x.
\end{equation}
By Proposition~\ref{p:integrated_form_is_non_degenerate_rep_of_C_c(G,A)} the linear span of elements of the form $\intform(f)y$, with $f\in C_c(G,A)$ and $y\in X$, is dense in $X$, and therefore \eqref{e:retrieving_pi_2} implies that $\pi(a)$ and $\overline{(\intform)^\mr}(i_A^\mr(a))$ are equal.

The proof that $\overline{(\intform)^\mr}(i_G(r)) = U_r$, for $r \in G$, is similar.
\end{proof}

The reconstruction formula \eqref{e:reconstruction_formula} will enable us to complete our results on the continuous covariant representation $(i_A^\mr , i_G^\mr)$ from Proposition~\ref{p:covariant_representation_on_crossed_product}, under the extra conditions that $A$ has a bounded approximate left identity and that all elements of $\mr$ are non-degenerate.

\begin{theorem}\label{t:integrated_form_of_representation_as_left_centralizers_is_canonical}
Let $\dynsys$ be a Banach algebra dynamical system, where $A$ has a bounded approximate left identity, and let $\mathcal{R}$ be a non-empty uniformly bounded class of non-degenerate continuous covariant representations. Then the non-degenerate continuous covariant representation $(i_A^\mr, i_G^\mr)$ of $\dynsys$ on $\crosprod$ from Proposition~\ref{p:covariant_representation_on_crossed_product} is $\mr$-continuous, and the associated non-degenerate bounded representation $(i_A^\mr \rt i_G^\mr)^\mr$ of $\crosprod$ on itself coincides with the left regular representation, and is therefore contractive.
\end{theorem}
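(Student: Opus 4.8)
The plan is to prove all three assertions simultaneously by identifying the integrated form of $(i_A^\mr, i_G^\mr)$ with the left regular representation. Writing $\psi(s) := i_A^\mr(f(s))\, i_G^\mr(s)$ for $f \in C_c(G,A)$, the integrated form is, by \eqref{d:intformdef},
\[
L_f := (i_A^\mr \rt i_G^\mr)(f) = \int_G i_A^\mr(f(s))\, i_G^\mr(s)\ds \in B(\crosprod),
\]
and the goal is to show $L_f = \lambda(\qr(f))$, the operator of left multiplication by $\qr(f)$ on $\crosprod$. Once this is established, everything follows at once: since $\norm{\lambda(\qr(f))} \leq \normr{\qr(f)} = \sigmar(f)$, the map $i_A^\mr \rt i_G^\mr$ is $\sigmar$-bounded with norm at most $1$, so $(i_A^\mr, i_G^\mr)$ is $\mr$-continuous; the induced representation $(i_A^\mr \rt i_G^\mr)^\mr$ then agrees with $\lambda$ on the dense set $\qr(C_c(G,A))$ and hence, by continuity, equals the left regular representation; and contractivity is immediate, also matching $\normr{i_A^\mr \rt i_G^\mr} = \norm{(i_A^\mr \rt i_G^\mr)^\mr} \leq 1$ from Theorem~\ref{t:summary_from_dyn_sys_to_crossed_product}.

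To identify $L_f$, I would first observe that $L_f \in \mathcal M_l(\crosprod)$: each $\psi(s)$ is the product of the left centralizers $i_A^\mr(f(s))$ and $i_G^\mr(s)$ furnished by Proposition~\ref{p:covariant_representation_on_crossed_product}, and for $a, b \in \crosprod$ the identity $L_f(a)\, b = \int_G \psi(s)(a)\, b\ds = \int_G \psi(s)(ab)\ds = L_f(ab)$ follows by pulling the bounded operation of right multiplication by $b$ through the $\crosprod$-valued integral (Remark~\ref{r:vector_valued_integration}). Since $A$ has a bounded left approximate identity and $\mr$ consists of non-degenerate representations, Proposition~\ref{p:representations_separate_left_centralizers} applies, so it suffices to prove
\[
\overline{(\intform)^\mr}(L_f) = \overline{(\intform)^\mr}(\lambda(\qr(f))) \qquad \text{for every } \covrep \in \mr.
\]
The right-hand side is immediate: commutativity of the diagram in Theorem~\ref{t:summary_for_centralizers} gives $\overline{(\intform)^\mr} \circ \lambda = (\intform)^\mr$, so it equals $(\intform)^\mr(\qr(f)) = \intform(f)$.

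For the left-hand side I would compute on the dense range of $(\intform)^\mr$. Fix $\covrep \in \mr$; it is non-degenerate and $\mr$-continuous, so the reconstruction formula \eqref{e:reconstruction_formula} of Proposition~\ref{p:from_crossed_product_to_dyn_sys} gives $\overline{(\intform)^\mr} \circ i_A^\mr = \pi$ and $\overline{(\intform)^\mr} \circ i_G^\mr = U$. For $c \in \crosprod$, the compatibility relation \eqref{e:compatibility} yields $\overline{(\intform)^\mr}(L_f) \circ (\intform)^\mr(c) = (\intform)^\mr(L_f(c))$; pulling $(\intform)^\mr$ through the $\crosprod$-valued integral $L_f(c) = \int_G \psi(s)(c)\ds$ (Remark~\ref{r:vector_valued_integration}), and then applying \eqref{e:compatibility} twice together with the reconstruction formula to each integrand, turns $(\intform)^\mr(\psi(s)(c))$ into $\pi(f(s))\,U_s\,(\intform)^\mr(c)$. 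Reassembling,
\[
\overline{(\intform)^\mr}(L_f) \circ (\intform)^\mr(c) = \left(\int_G \pi(f(s))\,U_s\ds\right) \circ (\intform)^\mr(c) = \intform(f) \circ (\intform)^\mr(c),
\]
where \eqref{d:operators_through_integral} is used to extract the fixed operator $(\intform)^\mr(c)$. Since $\covrep$ is non-degenerate, $(\intform)^\mr$ is non-degenerate by Theorem~\ref{t:summary_from_dyn_sys_to_crossed_product}, so the vectors $(\intform)^\mr(c)x$ are dense and hence $\overline{(\intform)^\mr}(L_f) = \intform(f)$, matching the right-hand side. Separation then forces $L_f = \lambda(\qr(f))$, and the non-degeneracy asserted in the statement is inherited from that of $(i_A^\mr,i_G^\mr)$ via Theorem~\ref{t:summary_from_dyn_sys_to_crossed_product}.

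The main obstacle I anticipate is the bookkeeping in the interchange of the (homomorphisms induced by the) representation with the vector-valued integral defining $L_f$: one must track carefully that $(\intform)^\mr$ and $\overline{(\intform)^\mr}$ are applied in the correct spaces, and that each pull-through is licensed by \eqref{d:operators_through_integral} or Remark~\ref{r:vector_valued_integration} rather than by any unproven continuity of the integral in the inductive limit topology. The elegance of the argument lies in routing the identification through the point-separating family $\{\overline{(\intform)^\mr}\}_{\covrep \in \mr}$ together with the reconstruction formula, which sidesteps any direct manipulation of $\qr$ under the integral sign.
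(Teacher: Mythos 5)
Your proposal is correct and takes essentially the same route as the paper's own proof: show that $(i_A^\mr \rt i_G^\mr)(f)$ is a left centralizer, establish $\overline{(\intform)^\mr}\left((i_A^\mr \rt i_G^\mr)(f)\right) = \intform(f) = \overline{(\intform)^\mr}\left(\lambda(\qr(f))\right)$ for every $\covrep \in \mr$ via the reconstruction formula \eqref{e:reconstruction_formula} and diagram \eqref{diag:left_centralizer_diagram}, invoke the separation property of Proposition~\ref{p:representations_separate_left_centralizers} to get $(i_A^\mr \rt i_G^\mr)(f) = \lambda(\qr(f))$, and deduce $\mr$-continuity and the identification of $(i_A^\mr \rt i_G^\mr)^\mr$ with the left regular representation by the norm estimate and density. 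The only divergence is in execution of the middle step: where the paper pulls $\overline{(\intform)^\mr}$ directly through the operator-valued integral, you evaluate against the dense range of $(\intform)^\mr$ using \eqref{e:compatibility} and close with non-degeneracy, which is an equally valid (and if anything more carefully justified) rendering of the same computation.
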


\begin{proof}
We start by showing that $(i_A^\mr, i_G^\mr)$ is $\mr$-continuous. If $f\in C_c(G,A)$, then $i_A(f(s)) i_G(s)$ is a left centralizer for all $s \in G$, and hence commutes with all right multiplications. By \eqref{d:operators_through_integral} these right multiplications can be pulled through the integral, therefore $i_A^\mr \rt i_G^\mr(f) = \int_G i_A(f(s)) i_G(s) \ds$ commutes with all right multiplications as well, hence it is a left centralizer.

Let $\lambda$ denote the left regular representation of $\crosprod$. Then using \eqref{e:reconstruction_formula} in the fourth step, we find that, for all $\covrep\in\mr$,
\begin{align*}
\overline{(\intform)^\mr}\left(i_A^\mr\rt i_G^\mr (f)\right)&=
 \overline{(\intform)^\mr}\brackets{\int_G i_A^\mr(f(s))i_G^\mr(s)\,ds}\\ &= \int_G \overline{(\intform)^\mr}(i_A^\mr(f(s))i_G^\mr(s))\,ds \\
&= \int_G \overline{(\intform)^\mr}(i_A^\mr(f(s)))\cdot\overline{(\intform)^\mr}(i_G^\mr(s)) \,ds \\
&= \int_G \pi(f(s))U_s \,ds \\
&= \intform(f)\\
&=(\intform)^\mr(\qr(f))\\
&= \overline{(\intform)^\mr}(\lambda(\qr(f))),
\end{align*}
where diagram~\eqref{diag:left_centralizer_diagram} was used in the final step. By Proposition~\ref{p:representations_separate_left_centralizers} the representations $\overline{(\intform)^\mr}$ separate the points, and it follows that
\[
i_A^\mr\rt i_G^\mr (f)=\lambda(\qr(f)).
\]
Consequently, $\norm{i_A^\mr\rt i_G^\mr (f)}\leq\norm{\lambda}\normr{\qr(f)}=\norm{\lambda}\sigmar(f)$. We conclude that $i_A^\mr\rt i_G^\mr$ is $\mr$-continuous.

Next we consider the statement that $(i_A^\mr\rt i_G^\mr)^\mr$, the representation of $\crosprod$ on itself, which we now know to be defined as a consequence of the first part of the proof, is the left regular representation. Let $f\in C_c(G,A)$. Then, for all $\covrep\in\mr$, the above computation shows that $\overline{(\intform)^\mr}\left((i_A^\mr\rt i_G^\mr)^\mr(\qr(f))\right)=\overline{(\intform)^\mr}\left(i_A^\mr\rt i_G^\mr(f)\right)=\overline{(\intform)^\mr}(\lambda(\qr(f)))$, so again by the point-separating property of the representations $\overline{(\intform)^\mr}$ it follows that $(i_A^\mr\rt i_G^\mr)^\mr(\qr(f))=\lambda(\qr(f))$. By continuity and density, the statement follows.
\end{proof}

In turn, Theorem~\ref{t:integrated_form_of_representation_as_left_centralizers_is_canonical}
enables us to understand that, as already remarked after Proposition~\ref{p:from_crossed_product_to_dyn_sys}, the non-degenerate continuous covariant representation obtained in that proposition is actually $\mr$-continuous, under the extra condition that all elements of $\mr$ are non-degenerate. In that case, there is an associated bounded representation of the crossed product again, and the following result, in which some other main results of this section have been included again for future reference, shows additionally that this two-step process is the identity.

\begin{theorem}\label{t:r_affiliated_from_crossed_product_to_dyn_sys}
Let $\dynsys$ be a Banach algebra dynamical system, where $A$ has a bounded approximate left identity, and let $\mr$ be a non-empty uniformly bounded class of non-degenerate continuous covariant representations. Let $(i_A^\mr, i_G^\mr)$ be the non-degenerate $\mr$-continuous covariant representation of $\dynsys$ on $\crosprod$, as in Proposition~\ref{p:covariant_representation_on_crossed_product} and Theorem~\ref{t:integrated_form_of_representation_as_left_centralizers_is_canonical}.

Suppose that $T$ is a non-degenerate bounded representation of $\crosprod$ in a Banach space $X$, and let $\overline T$ be the associated representation of $\mathcal M_l(\crosprod)$ in $X$, as in Theorem~\ref{t:summary_for_centralizers}. Then the pair $(\overline{T} \circ i_A^\mr, \overline{T} \circ i_G^\mr)$ is a non-degenerate $\mr$-continuous covariant representation of $\dynsys$ in $X$, and the corresponding non-degenerate bounded representation $\left((\overline{T} \circ i_A^\mr)\rt (\overline{T} \circ i_G^\mr)\right)^\mr$ of $\crosprod$ in $X$ coincides with $T$. In particular, $\normr{(\overline{T} \circ i_A^\mr)\rt (\overline{T} \circ i_G^\mr)}=\norm{T}$.

If a closed subspace of $X$ is invariant for $T$, it is invariant for $\overline{T} \circ i_A$ and $\overline{T} \circ i_G$, and if $Y$ is a Banach space, $S: \crosprod \to B(Y)$ a representation and $\Phi \in B(X,Y)$ intertwines $T$ and $S$, then $\Phi$ intertwines $(\overline{T} \circ i_A^\mr, \overline{T} \circ i_G^\mr)$ and $(\overline{S} \circ i_A^\mr, \overline{S} \circ i_G^\mr)$.

If, in addition, $\dynsys$, $\mathcal{R}$, and $T$ are involutive, then $(\overline{T} \circ i_A^\mr, \overline{T} \circ i_G^\mr)$ is involutive.
\end{theorem}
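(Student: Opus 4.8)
The plan is to notice that almost all of the assertions are already contained in Proposition~\ref{p:from_crossed_product_to_dyn_sys}, so that only a small amount of genuinely new content remains. Writing $\pi := \overline{T} \circ i_A^\mr$ and $U := \overline{T} \circ i_G^\mr$, Proposition~\ref{p:from_crossed_product_to_dyn_sys} already gives that $\covrep$ is a non-degenerate continuous covariant representation of $\dynsys$, that closed $T$-invariant subspaces are invariant for $\pi$ and $U$, that bounded intertwining operators are preserved, and that $\covrep$ is involutive whenever $\dynsys$, $\mr$ and $T$ are. Hence it suffices to prove that $\covrep$ is in fact $\mr$-continuous, that $\left((\overline{T} \circ i_A^\mr)\rt (\overline{T} \circ i_G^\mr)\right)^\mr = T$, and that $\normr{\pi \rt U} = \norm{T}$. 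All three will follow at once from the single key identity
\[
(\pi \rt U)(f) = T(\qr(f)) \qquad (f \in C_c(G,A)).
\]

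To establish this identity I would test both sides on the subspace spanned by vectors $T(c)y$ with $c \in \crosprod$ and $y \in X$, which is dense by the non-degeneracy of $T$. Evaluating the strong-operator integral defining $(\pi \rt U)(f)$ on such a vector produces the continuous, compactly supported integrand $s \mapsto \pi(f(s))U_s T(c)y$. Since $\overline{T}$ is a homomorphism, $\pi(f(s))U_s = \overline{T}\bigl(i_A^\mr(f(s))\, i_G^\mr(s)\bigr)$ with $i_A^\mr(f(s))\, i_G^\mr(s) \in \mathcal{M}_l(\crosprod)$, so the compatibility relation~\eqref{e:compatibility} from Theorem~\ref{t:summary_for_centralizers} rewrites the integrand as $T\bigl((i_A^\mr(f(s))\, i_G^\mr(s))(c)\bigr)y$. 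As $a \mapsto T(a)y$ is a bounded operator from $\crosprod$ into $X$, it may be pulled out of the integral by Remark~\ref{r:vector_valued_integration}, yielding $T\bigl(\int_G (i_A^\mr(f(s))\, i_G^\mr(s))(c)\,ds\bigr)y$. The inner integral is exactly $\bigl(i_A^\mr \rt i_G^\mr(f)\bigr)(c)$, and it is here that Theorem~\ref{t:integrated_form_of_representation_as_left_centralizers_is_canonical} enters decisively: it identifies $i_A^\mr \rt i_G^\mr(f)$ with $\lambda(\qr(f))$, so the inner integral equals $\qr(f) * c$. Therefore $(\pi \rt U)(f)\,T(c)y = T(\qr(f) * c)y = T(\qr(f))\,T(c)y$, and since $(\pi\rt U)(f)$ and $T(\qr(f))$ are bounded operators agreeing on a dense subspace, they coincide.

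With the key identity available, the three remaining claims are immediate. The estimate $\norm{(\pi\rt U)(f)} = \norm{T(\qr(f))} \leq \norm{T}\,\sigmar(f)$ shows that $\pi \rt U$ is $\sigmar$-bounded, so $\covrep$ is $\mr$-continuous with $\normr{\pi\rt U} \leq \norm{T}$; the induced representation $(\pi\rt U)^\mr$ of $\crosprod$ then satisfies $(\pi\rt U)^\mr(\qr(f)) = T(\qr(f))$, and since $\qr(C_c(G,A))$ is dense in $\crosprod$ and both maps are bounded, $(\pi\rt U)^\mr = T$. Finally, the isometry statement in Lemma~\ref{l:completion_associated_with_seminorm} gives $\normr{\pi\rt U} = \norm{(\pi\rt U)^\mr} = \norm{T}$.

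I expect the main difficulty to be purely a matter of careful bookkeeping in the central computation: the integral defining $(\pi\rt U)(f)$ is only a strong-operator integral, so it must be manipulated after evaluation on a fixed vector, and the relation~\eqref{e:compatibility} has to be applied with $L = i_A^\mr(f(s))\, i_G^\mr(s)$ \emph{inside} the integrand before integrating. The conceptual weight of the argument, however, rests entirely on Theorem~\ref{t:integrated_form_of_representation_as_left_centralizers_is_canonical}; this is precisely where the hypothesis that every element of $\mr$ is non-degenerate is consumed, since without the identification $i_A^\mr \rt i_G^\mr(f) = \lambda(\qr(f))$ neither the $\mr$-continuity nor the reconstruction $(\pi \rt U)^\mr = T$ would be available.
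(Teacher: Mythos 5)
Your proposal is correct and takes essentially the same route as the paper: both reduce everything already covered by Proposition~\ref{p:from_crossed_product_to_dyn_sys} and then prove the key identity $(\pi\rt U)(f)=T(\qr(f))$ by combining the compatibility relation \eqref{e:compatibility} with the identification $i_A^\mr\rt i_G^\mr(f)=\lambda(\qr(f))$ from Theorem~\ref{t:integrated_form_of_representation_as_left_centralizers_is_canonical}, finishing with the density of vectors $T(\qr(g))x$ supplied by the non-degeneracy of $T$. The only cosmetic difference is that you carry out the computation as a single chain of equalities on vectors $T(c)y$ with $c\in\crosprod$ arbitrary, whereas the paper derives the two integral expressions \eqref{e:first_expression} and \eqref{e:second_expression} on vectors $T(\qr(g))x$ and compares them.
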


\begin{proof}
The statements concerning invariant subspaces, intertwiners and involutions have already been proven in Proposition~\ref{p:from_crossed_product_to_dyn_sys}.

Denote $\pi := \overline{T} \circ i_A^\mr$ and $U := \overline{T} \circ i_G^\mr$. Proposition~\ref{p:from_crossed_product_to_dyn_sys} asserts that $(\pi,U)$ is a continuous covariant representation, hence its integrated form $\intform:C_c(G,A)\to B(X)$ is defined.
We claim that, for all $f\in C_c(G,A)$,
\begin{equation}\label{e:claim}
\intform(f)=T(\qr(f)).
\end{equation}
Assuming this for the moment, we see that, for all $f\in C_c(G,A)$, $\norm{\intform(f)}=\norm{T(\qr(f))}\leq\norm{T}\normr{\qr(f)}=\norm{T}\sigmar(f)$. Hence $(\pi,U)$ is $\mr$-continuous, and consequently the corresponding bounded representation $(\intform)^\mr:\crosprod\to B(X)$ can indeed be defined and we conclude, using the definition and \eqref{e:claim}, that $(\intform)^\mr(\qr(f))=\intform(f)=T(\qr(f))$, for all $f\in C_c(G,A)$. By the density of $\qr(C_c(G,A))$ in $\crosprod$, this implies that $(\intform)^\mr=T$. The statement concerning the norms then follows from Theorem~\ref{t:summary_from_dyn_sys_to_crossed_product}.

Hence it remains to establish \eqref{e:claim}. For this, let $f,\,g\in C_c(G,A)$. Then \eqref{e:compatibility} implies that
\begin{align*}
U_s \circ T(\qr(g)) &= \overline T(i_G^\mr(s)) \circ T(\qr(g)) \\
&= T(i_G^\mr(s)(q^\mr(g))) \\
&= T\left(\qr(i_G(s)g)\right).
\end{align*}
Similarly, we have, for all $a \in A$, and $h\in C_c(G,A)$,
\[ \pi(a) \circ T(\qr(h)) = T \left(\qr(i_A(a)h)\right)x. \]
Combining these, we find that, for $s \in G$,
\begin{align*}
\pi(f(s)) U_s \circ T(\qr(g)) &= \pi(f(s)) \circ T\left(\qr(i_G(s)g)\right) \\
&= T \left( \qr(i_A(f(s)) i_G(s)g) \right) \\
&= T \left(i_A^\mr(f(s)) i_G^\mr(s) \qr(g) \right).
\end{align*}
We conclude that, for all $f,\,g\in C_c(G,A)$,
\begin{equation}\label{e:first_expression}
\intform(f) \circ T(\qr(g)) = \int_G T \left( i_A^\mr(f(s)) i_G^\mr(s)\qr(g) \right) \,ds.
\end{equation}
On the other hand, with $\lambda$ denoting the left regular representation of $\crosprod$, Theorem~\ref{t:integrated_form_of_representation_as_left_centralizers_is_canonical} implies that, for $f,\,g\in C_c(G,A)$,
\begin{align*}
\qr(f) * \qr(g)&=\lambda(\qr(f)) \qr(g) \\
&= (i_A^\mr \rt i_G^\mr)^\mr(\qr(f)) \qr(g) \\
&= i_A^\mr \rt i_G^\mr(f) \qr(g) \\
&=\int_G i_A^\mr(f(s))i_G^\mr(s) \qr(g) \,ds.
\end{align*}
Applying the bounded homomorphism $T$ to this relation yields
\begin{equation}\label{e:second_expression}
T(\qr(f))\circ T(\qr(g))= \int_G T\left(i_A^\mr(f(s))i_G^\mr(s)\qr(g)\right)\,ds,
\end{equation}
for all $f,\,g\in C_c(G,A)$. For $x \in X$, comparing \eqref{e:first_expression} and \eqref{e:second_expression} and applying them to $x$, we see that
\begin{equation}\label{e:equality_of_expressions}
\intform(f) \left( T(\qr(g))x \right) = T(\qr(f)) \left( T(\qr(g))x \right).
\end{equation}
Now since $T$ is a non-degenerate bounded representation of $\crosprod$, the restriction of $T$ to the dense subalgebra $\qr(C_c(G,A)$ must be non-degenerate as well, and so elements of the form $T(\qr(g))x$ are dense in $X$. Hence \eqref{e:equality_of_expressions} implies that $\intform(f) = T(\qr(f))$ holds for all $f\in C_c(G,A)$, as desired.
\end{proof}

\section{Representations: general correspondence}\label{sec:general_correspondence}

In this section, which can be viewed as the conclusion of the analysis in the preceding parts of this paper, we put the pieces together without too much extra effort. We give references to the relevant definitions, in order to enhance accessibility of the results to the reader who is not familiar with the details of the Sections~\ref{sec:preliminaries} through~\ref{sec:from_crosprod_to_dynsys}. Section~\ref{sec:correspondences} contains some applications.

As an introductory remark for the reader who is familiar with the preceding sections, we note that Theorem~\ref{t:summary_from_dyn_sys_to_crossed_product} describes the properties of the passage from $\mr$-continuous covariant representations of $\dynsys$ to bounded representations of $\crosprod$. Such a passage is always possible, without further assumptions on the Banach algebra dynamical system or the covariant representations. Proposition~\ref{p:from_crossed_product_to_dyn_sys}, valid under the condition that $A$ has a bounded approximate left identity, goes in the opposite direction, but it is only for non-degenerate bounded representations of $\crosprod$ that a (non-degenerate) continuous covariant representation of $\dynsys$ is constructed. If one starts with an non-degenerate $\mr$-continuous covariant representation of $\dynsys$, passes to the associated non-degenerated bounded representation of $\crosprod$, and then goes back to $\dynsys$ again, the same Proposition~\ref{p:from_crossed_product_to_dyn_sys} shows that one retrieves the original covariant representation of $\dynsys$. If, in addition, all elements of $\mr$ are themselves non-degenerate, then Proposition~\ref{p:from_crossed_product_to_dyn_sys} can be improved to Theorem~\ref{t:r_affiliated_from_crossed_product_to_dyn_sys}, where it is concluded that the (non-degenerate) continuous covariant representation of $\dynsys$ as constructed from a non-degenerate bounded representation of $\crosprod$ is actually $\mr$-continuous. Hence it is possible to go in the first direction again, thus obtaining a bounded representation of $\crosprod$, and, according to the same Theorem~\ref{t:r_affiliated_from_crossed_product_to_dyn_sys}, this is the representation of $\crosprod$ one started with. As it turns out, if we impose these conditions on $A$ (having a bounded approximate left identity) and $\mr$ (consisting of non-degenerate continuous covariant representations), and restrict our attention to \emph{non-degenerate} $\mr$-continuous covariant representations of $\dynsys$ and \emph{non-degenerate} bounded representations of $\crosprod$, then we obtain a bijection, according to our main general result, the general correspondence in Theorem~\ref{t:bijection} below.

We now turn to the formulation of the result, recalling the relevant notions and definitions as a preparation, and introducing two new notations for (covariant) representations of a certain type. If $\dynsys$ is a Banach algebra dynamical system (Definition~\ref{d:banach_algebra_dynamical_system}), $\mr$ is a non-empty uniformly bounded (Definition~\ref{d:uniformly_bounded_class}) class of non-degenerate continuous covariant representations (Definition~\ref{d:covariant_representation}) of $\dynsys$, and $\bclass$ is a non-empty class of Banach spaces, we let $\Covreprndc(\dynsys,\bclass)$ denote the non-degenerate $\mr$-continuous (Definitions~\ref{d:crossed_product} and~\ref{d:affiliated}) representations of $(A,G,\alpha)$ in spaces from $\bclass$, and we let $\Repndb(\crosprod,\bclass)$ denote the non-degenerate bounded representations of the crossed product $\crosprod$ (Definition~\ref{d:crossed_product}) in spaces from $\bclass$. There need not be a relation between the representation spaces corresponding to the elements of $\mr$ and the spaces in $\bclass$.

Furthermore, we let $\integrate$ denote the assignment $\covrep \to (\intform)^\mr$, sending an $\mr$-continuous covariant representation of $\dynsys$ to a bounded representation of $\crosprod$, as explained following Remark~\ref{r:affiliated_is_bounded}. If $A$ has a bounded approximate left identity, then we let $\separate$ denote the assignment $T\to (\overline{T} \circ i_A^\mr, \overline{T} \circ i_G^\mr)$, as in Proposition~\ref{p:from_crossed_product_to_dyn_sys} and Theorem~\ref{t:r_affiliated_from_crossed_product_to_dyn_sys}, sending a non-degenerate bounded representation of $\crosprod$ to a non-degenerate continuous covariant representation of $\dynsys$, obtained by first constructing a non-degenerate bounded representation $\overline T$ of the left centralizer algebra of $\crosprod$, compatible with $T$, and subsequently composing this with the canonical continuous covariant representation $(i_A^\mr,i_A^\mr)$ of $\dynsys$ in $\crosprod$, the images of which actually lie in this left centralizer algebra (see Proposition~\ref{p:covariant_representation_on_crossed_product}).

The notations $\integrate$ and $\separate$ are meant to suggest ``integration" and ``separation", respectively.

Finally, we recall the notions of an involutive Banach algebra dynamical system (Definition~\ref{d:banach_algebra_dynamical_system}), of an involutive representation of such a system (Definition~\ref{d:covariant_representation}), and of bounded intertwining operators between (covariant) representations (final part of Section~\ref{subsec:banach_algebra_dynamical_systems}).

\begin{theorem}[General correspondence theorem]\label{t:bijection}
Let $\dynsys$ be a Banach algebra dynamical system, where $A$ has a bounded approximate left identity, let $\mr$ be a non-empty uniformly bounded class of non-degenerate continuous covariant representations of $\dynsys$, and let $\bclass$ be a non-empty class of Banach spaces.
Then the restriction of $\integrate$ yields a bijection
\[
\integrate:\Covreprndc(\dynsys,\bclass)\to\Repndb(\crosprod,\bclass),
\]
and the restriction of $\separate$ yields a bijection
\[
\separate:\Repndb(\crosprod,\bclass)\to\Covreprndc(\dynsys,\bclass),
\]
In fact, these restrictions of $\integrate$ and $\separate$ are inverse to each other.

Furthermore, both these restrictions of $\integrate$ and $\separate$ preserve the set of closed invariant subspaces for an element of their domain, as well as the Banach space of bounded intertwining operators between two elements of their domain.

If $\dynsys$ and $\mr$ are involutive, then both these restrictions of $\integrate$ and $\separate$ preserve the property of being involutive.
\end{theorem}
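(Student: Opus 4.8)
The plan is to assemble the results already established in Sections~\ref{sec:from_dynsys_to_crosprod} through~\ref{sec:from_crosprod_to_dynsys}; since all the analytic work has been carried out there, the proof reduces to verifying that the maps $\integrate$ and $\separate$ land in the correct classes, are mutually inverse, and transport the relevant structure. First I would confirm that the two maps have the stated domains and codomains. That $\integrate$ sends an element of $\Covreprndc(\dynsys,\bclass)$ to one of $\Repndb(\crosprod,\bclass)$ is immediate from part~(i) of Theorem~\ref{t:summary_from_dyn_sys_to_crossed_product}: a non-degenerate $\mr$-continuous covariant representation $\covrep$ yields the non-degenerate bounded representation $\intformr$ on the same Banach space. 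For $\separate$ the corresponding fact is exactly Theorem~\ref{t:r_affiliated_from_crossed_product_to_dyn_sys}: given a non-degenerate bounded representation $T$ of $\crosprod$, the pair $(\overline T\circ i_A^\mr,\overline T\circ i_G^\mr)$ is a non-degenerate $\mr$-continuous covariant representation on the same space. This is the step where the hypothesis that $\mr$ consists of non-degenerate representations is indispensable, since it is precisely this assumption that upgrades Proposition~\ref{p:from_crossed_product_to_dyn_sys} to Theorem~\ref{t:r_affiliated_from_crossed_product_to_dyn_sys} and thereby guarantees $\mr$-continuity of the output.

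Next I would establish that the two restrictions are inverse to one another. The identity $\separate\circ\integrate=\id$ on $\Covreprndc(\dynsys,\bclass)$ is the reconstruction formula \eqref{e:reconstruction_formula} in Proposition~\ref{p:from_crossed_product_to_dyn_sys}, which asserts $\left(\overline{(\intform)^\mr}\circ i_A^\mr,\overline{(\intform)^\mr}\circ i_G^\mr\right)=(\pi,U)$. Conversely, $\integrate\circ\separate=\id$ on $\Repndb(\crosprod,\bclass)$ is the content of Theorem~\ref{t:r_affiliated_from_crossed_product_to_dyn_sys}, whose conclusion $\left((\overline T\circ i_A^\mr)\rt(\overline T\circ i_G^\mr)\right)^\mr=T$ says exactly that integrating the covariant representation produced by $\separate$ returns $T$. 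Bijectivity of each restriction is then automatic, each map being a two-sided inverse of the other.

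Finally I would read off the structural statements from the same two theorems, now used in tandem with the fact that the maps are inverse bijections. Preservation of closed invariant subspaces and of the Banach space of bounded intertwining operators follows by combining parts~(ii) and~(iii) of Theorem~\ref{t:summary_from_dyn_sys_to_crossed_product} (for $\integrate$) with the corresponding assertions in Theorem~\ref{t:r_affiliated_from_crossed_product_to_dyn_sys} (for $\separate$); since the two maps are mutually inverse, the one-directional implications in each cited result fuse into the desired biconditionals. The involutive case is handled identically, invoking part~(iv) of Theorem~\ref{t:summary_from_dyn_sys_to_crossed_product} for $\integrate$ and the involutive conclusion of Proposition~\ref{p:from_crossed_product_to_dyn_sys}, restated in Theorem~\ref{t:r_affiliated_from_crossed_product_to_dyn_sys}, for $\separate$. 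I do not anticipate a genuine obstacle here; the only point demanding care is bookkeeping: making sure that the non-degeneracy hypothesis on $\mr$ is actually used (it enters only through Theorem~\ref{t:r_affiliated_from_crossed_product_to_dyn_sys}, via the $\mr$-continuity of $\separate(T)$), and that the invariant-subspace and intertwiner statements are quoted in the correct direction for each of the two maps before being combined into biconditionals through the inverse relationship.
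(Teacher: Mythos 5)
Your proposal is correct and follows essentially the same route as the paper's own proof: both assemble Theorem~\ref{t:summary_from_dyn_sys_to_crossed_product}, Proposition~\ref{p:from_crossed_product_to_dyn_sys} (via the reconstruction formula \eqref{e:reconstruction_formula}) and Theorem~\ref{t:r_affiliated_from_crossed_product_to_dyn_sys} to get well-definedness, the two inverse identities, and the preservation statements. Your added remarks on where the non-degeneracy of $\mr$ enters and on combining the one-directional preservation statements into biconditionals via the inverse relationship are accurate glosses on what the paper leaves implicit.
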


\begin{proof}
According to Theorem~\ref{t:summary_from_dyn_sys_to_crossed_product} and Theorem~\ref{t:r_affiliated_from_crossed_product_to_dyn_sys}, and also taking into account that $\integrate$ and $\separate$ obviously preserve the representation space, these restricted maps are indeed meaningfully defined with domains and codomains as in the statement.
Theorem~\ref{t:r_affiliated_from_crossed_product_to_dyn_sys} asserts that $\integrate(\separate(T)) = T$, for each non-degenerate bounded representation $T$ of $\crosprod$, whereas Proposition~\ref{p:from_crossed_product_to_dyn_sys} shows that $\separate(\integrate(\covrep))=\covrep$, for each non-degenerate $\mr$-continuous covariant representation $\covrep$ of $\dynsys$. This settles the bijectivity statements.

Theorem~\ref{t:summary_from_dyn_sys_to_crossed_product} and Theorem~\ref{t:r_affiliated_from_crossed_product_to_dyn_sys} contain the statements about preservation of closed invariant subspaces, intertwining operators and the property of being involutive.

\end{proof}

\begin{remark}\label{r:s_lim}
 The map $\separate$, associated with a Banach algebra dynamical system $\dynsys$, where $A$ has a bounded left approximate identity, and a non-empty uniformly bounded class $\mr$ of non-degenerate continuous covariant representations of $\dynsys$, can be made explicit by recalling how Theorem~\ref{t:summary_for_centralizers} was used in its definition. Indeed, let $T$ be a non-degenerate bounded representation of $\crosprod$ in a Banach space $X$. We recall the bounded approximate left identity $(\qr(z_V \otimes u_i))$ of $\crosprod$ of Theorem~\ref{t:crossed_product_has_bounded_approximate_identities}; here $(u_i)$ is a bounded approximate left identity of $A$, $V$ runs through a neighbourhood basis $\mathcal Z$ of $e\in G$, of which all elements are contained in a fixed compact subset of $G$, and $z_V\in C_c(G)$ is positive, with total integral equal to 1, and supported in $V$. Let $x\in X$. Then by \eqref{e:extension_as_sot_limit} we find, for $a\in A$,
\begin{align}\label{e:S_explicit_A}
 (\overline{T} \circ i_A^\mr)(a)x &= \overline{T}(i_A^\mr(a))x \\
&= \lim_{(V,i)} T \left[ i_A^\mr(a)\left( \qr(z_V \otimes u_i)\right) \right] x\notag \\
&= \lim_{(V,i)} T \left[ \qr(z_V \otimes a u_i) \right] x,\notag
\end{align}
and, for $r \in G$,
\begin{align}\label{e:S_explicit_G}
 (\overline{T} \circ i_G^\mr)(r)x &= \overline{T}(i_G^\mr(r))x \\
&= \lim_{(V,i)} T \left[ i_G^\mr(r) \left(\qr(z_V \otimes u_i)\right) \right] x\notag \\
&= \lim_{(V,i)} T \left[ \qr(z_V(r^{-1} \cdot) \otimes \alpha_r(u_i)) \right] x.\notag
\end{align}
Denoting s-lim for the limit in the strong operator topology, it follows that
\[ \separate(T) = \left( a \mapsto \mbox{s-}\lim_{(V,i)} T \left[ \qr(z_V \otimes a u_i) \right], r \mapsto \mbox{s-}\lim_{(V,i)} T \left [\qr(z_V(r^{-1} \cdot) \otimes \alpha_r( u_i)) \right] \right). \]
\end{remark}

\begin{remark}
Any non-degenerate continuous covariant representation $\covrep$ of $\dynsys$ in a Banach space $X$ is an element of $\Covreprndc(\dynsys,\bclass)$ with $\mr=\{\covrep\}$ and $\bclass=\{X\}$. If $A$ has a bounded approximate left identity, then, for $a \in A$, $r \in G$ and $x \in X$, inserting \eqref{e:S_explicit_A} and $\eqref{e:S_explicit_G}$ into $\separate(\integrate(\covrep))=\covrep$, as known from Theorem~\ref{t:bijection}, yields, with the $z_V\otimes u_i$ as in Remark~\ref{r:s_lim},
\begin{align*}
 \pi(a)x &= \left( \overline{\intformr} \circ i_A^\mr \right)(a) x \\
&= \lim_{(V,i)} \intformr \left[ \qr(z_V \otimes a u_i) \right] x \\
&= \lim_{(V,i)} \int_G z_V(s) \pi(a u_i) U_s x \ds, \\
U_r x &= \left( \overline{\intformr} \circ i_G^\mr \right)(r) x \\
&= \lim_{(V,i)} \intformr \left[ \qr(z_V(r^{-1} \cdot )  \otimes \alpha_r(u_i) ) \right] x \\
&= \lim_{(V,i)} \int_G z_V(r^{-1}s) \pi(\alpha_r(u_i)) U_s x \ds.
\end{align*}

These formulas, valid for an arbitrary non-degenerate continuous covariant representation $\covrep$ of $\dynsys$, where $A$ has a bounded approximate left identity, can also be obtained more directly, by writing $x = \lim_{(V,i)} \intform(z_V \otimes u_i) x$ (using Remark~\ref{r:strong_continuity_approx_identity}) and then using \eqref{e:intforms_of_i_j_A_G} in Proposition~\ref{p:i_j_A_G_properties}.
\end{remark}

\begin{remark}\label{r:constantdefinitions_estimates}
One also has norm estimates related to the maps $\integrate$ and $\separate$.

As to $\integrate$, if we assume that $\dynsys$ is a Banach algebra dynamical system, and that $\mr$ is a non-empty uniformly bounded class of continuous covariant representations of $\dynsys$, then, if  $\covrep$ is an $\mr$-continuous covariant representation of $\dynsys$, \eqref{e:sigmabound} yields that
\begin{align}\label{e:i_estimate}
\norm{\integrate(\covrep)\qr(f)}= \norm{\intform(f)} \leq \norm{\pi} \norm{f}_{L^1(G,A)} \sup_{s \in \supp(f)} \norm{U_s},
\end{align}
for all $f \in C_c(G,A)$.

In order to give estimates for $\separate$, we assume that $\dynsys$ is a Banach algebra dynamical system, with $A$ having an approximate left identity, and that  $\mr$ is a non-empty uniformly bounded class of continuous covariant representations of $\dynsys$. We recall the relevant constants
\begin{align*}
  \Cr = \sup_{\covrep \in \mr} \norm{\pi}, \quad \nur(r) = \sup_{\covrep \in \mr} \norm{U_r}, \quad \Nr = \inf_{V \in \mathcal{Z}} \sup_{r \in V} \nur(r),
\end{align*}
where $\mathcal{Z}$ is a neighbourhood basis of $e \in G$ which is contained in a fixed compact set (see Definition~\ref{d:Nr_definition} and the subsequent paragraph, showing that $\Nr$ does not depend on the choice of such $\mathcal Z$). Furthermore, if $\mathcal A$ is a normed algebra with a bounded approximate left identity, then we recall that $M_l^{\mathcal A}$ denotes the infimum of the upper bounds of the approximate left identities of ${\mathcal A}$, and that we write $M_l^{\mr}$ for $M_l^{\crosprod}$. Then, if $T$ is a non-degenerate bounded representation of $\crosprod$, Proposition~\ref{p:from_crossed_product_to_dyn_sys} shows that, for $a\in A$,
\begin{equation*}
\norm{\left(\overline{T} \circ i_A^\mr\right)(a)} \leq M_l^\mr \norm{T} \sup_{\covrep\in \mr} \norm{\pi(a)} \leq M_l^\mr \norm{T} \Cr \norm{a}.
\end{equation*}
In particular,
\begin{equation}\label{e:pi_bound_from_crossed_product_to_dyn_sys_in_general_correspondence_section}
\norm{\overline{T} \circ i_A^\mr}\leq M_l^\mr \Cr \norm{T}.
\end{equation}
Proposition~\ref{p:from_crossed_product_to_dyn_sys} also yields that, for $r\in G$,
\begin{equation}\label{e:pointwise_U_bound_from_crossed_product_to_dyn_sys_in_general_correspondence_section}
\norm{\left(\overline{T} \circ i_G^\mr\right)(r)} \leq M_l^\mr \norm{T}  \nur(r).
\end{equation}
Furthermore, by Corollary~\ref{c:approximate_identities},
\begin{equation}\label{e:upper_bound_estimate_for_bounded_approximate_left_identity}
M_l^\mr \leq \Cr M_l^A \Nr.
\end{equation}
\end{remark}

\section{Representations: special correspondences}\label{sec:correspondences}

In this section, we discuss some special cases of the crossed product construction, based on Theorem~\ref{t:bijection}. In the first part, we are concerned with a general algebra and group, and make the correspondence between (covariant) representations more explicit in a number of cases. In the second part, we consider Banach algebra dynamical systems where the algebra is trivial. This leads, amongst others, to what could be called group Banach algebras associated with a class of Banach spaces. The third part covers the case of a trivial group. Here the machinery as developed in the previous sections is not necessary, and Theorem~\ref{t:bijection}, although applicable, does, in fact, not yield optimal results. In this case the crossed product is merely the completion of a quotient of the algebra, and the correspondence between representations is then standard, but we have nevertheless included the results for the sake of completeness of the presentation.

\subsection{General algebra and group}\label{subsec:general_algebra_and_group}

Theorem~\ref{t:bijection} gives, for each class $\bclass$ of Banach spaces, a bijection between non-degenerate $\mr$-continuous covariant representations of $\dynsys$ and non-degenerate bounded representations of $\crosprod$ in spaces from $\bclass$. By definition, a continuous covariant representation $\covrep$ is $\mr$-continuous if there exists a constant $C$ such that $\norm{\intform (f)}\leq C\sup_{(\rho,V)\in\mr}\norm{\rho\rtimes V(f)}$, for all $f\in C_c(G,A)$. One would like to make this condition more explicit in terms of $\norm{\pi}$ and $\norm{U_r}$, for $r\in G$. For certain situations, this is indeed feasible (possibly by also restricting the maps $\integrate$ and $\separate$ in Theorem~\ref{t:bijection} to suitable subsets of their domains) on basis of the estimates in Remark~\ref{r:constantdefinitions_estimates}. Our basic theorem in this vein is the following.

\begin{theorem}\label{t:general_bijection_general_system}
Let $\dynsys$ be a Banach algebra dynamical system, where, for each $\eps >0$, $A$ has a $(1+\eps)$-bounded approximate left identity. Let $\mathcal{Z}$ be a neighbourhood basis of $e \in G$ contained in a fixed compact set, let $\nu: G \to [0, \infty)$ be bounded on compact sets and satisfy $\inf_{V \in \mathcal{Z}} \sup_{r \in V} \nu(r) = 1$. Let $\mr$ be a non-empty class of non-degenerate continuous covariant representations of $\dynsys$, such that, for $\covrep\in\mr$, $\pi$ is contractive and $\norm{U_r} \leq \nu(r)$, for all $r\in G$.

Let $\bclass$ be a class of Banach spaces, and suppose that $\mr$ contains the class $\mr^\prime$, consisting of all non-degenerate continuous covariant representations $\covrep$ of $\dynsys$ in spaces from $\bclass$, where $\pi$ is contractive and $\norm{U_r} \leq \nu(r)$, for all $r\in G$.

If $\mr^\prime$ is non-empty, then the map $\covrep \mapsto (\intform)^\mr$ is a bijection between $\mr^\prime$ and the non-degenerate contractive representations of $\crosprod$ in spaces from $\bclass$. This map preserves the set of closed invariant subspaces, as well as the Banach space of bounded intertwining operators between two elements of $\mr^\prime$.

If $\dynsys$ and $\mr$ are involutive, then this bijection preserves the property of being involutive.
\end{theorem}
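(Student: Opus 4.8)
The plan is to deduce the involutive statement directly from the corresponding assertion in the general correspondence theorem, Theorem~\ref{t:bijection}, by recognizing the present bijection as a restriction of the one established there. First I would verify that the hypotheses of Theorem~\ref{t:bijection} are in force: since $A$ carries a $(1+\eps)$-bounded approximate left identity for every $\eps>0$, it has a bounded approximate left identity; since each $\pi$ is contractive and each $U$ satisfies $\norm{U_r}\le\nu(r)$ with $\nu$ bounded on compacta, the class $\mr$ is non-empty and uniformly bounded, with $\Cr\le 1$ and $\nur\le\nu$; and $\bclass$ is non-empty because $\mr^\prime$ is. As $\mr$ is moreover involutive by assumption, Theorem~\ref{t:bijection} applies and yields mutually inverse bijections $\integrate\colon\Covreprndc(\dynsys,\bclass)\to\Repndb(\crosprod,\bclass)$ and $\separate$, both of which preserve the property of being involutive.

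Next I would check that the bijection of the present theorem is precisely the restriction of these maps. The elements of $\mr^\prime$ lie in $\mr$, hence are $\mr$-continuous with contractive integrated form, so $\integrate$ carries $\mr^\prime$ into the non-degenerate contractive representations of $\crosprod$ in spaces from $\bclass$, a subclass of $\Repndb(\crosprod,\bclass)$; conversely, the norm estimates of Proposition~\ref{p:from_crossed_product_to_dyn_sys}, combined with $M_l^\mr\le\Cr M_l^A\Nr\le 1$ and $\norm{T}\le 1$, show that $\separate$ sends any non-degenerate contractive $T$ back into $\mr^\prime$. Thus the bijection of the present theorem and its inverse are literally the restrictions of $\integrate$ and $\separate$ to these subclasses.

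Finally, the involutive preservation follows at once. If $\covrep\in\mr^\prime$ is involutive, it is an involutive element of $\Covreprndc(\dynsys,\bclass)$, so $\integrate(\covrep)=\intformr$ is involutive by Theorem~\ref{t:bijection}. Conversely, if $T$ is a non-degenerate contractive involutive representation of $\crosprod$ in a space from $\bclass$---note that in the involutive case $\crosprod$ is a $C^*$-algebra by Remark~\ref{r:seminorm_remark}, so contractivity is in fact automatic---then $T$ is an involutive element of $\Repndb(\crosprod,\bclass)$ and $\separate(T)$ is involutive, again by Theorem~\ref{t:bijection}. I expect no genuine obstacle here: the only points requiring care are the routine verification that $\mr$ meets the uniform-boundedness hypothesis of Theorem~\ref{t:bijection} and that the two restrictions indeed land in the claimed subclasses, both of which are immediate from the contractivity assumptions and the constant estimates collected in Remark~\ref{r:constantdefinitions_estimates}.
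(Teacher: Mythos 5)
Your proposal is correct and takes essentially the same route as the paper's own proof: both invoke Theorem~\ref{t:bijection} and then check, via $\Cr\le 1$, $M_l^A\le 1$, $\Nr\le 1$ (hence $M_l^\mr\le 1$) and the norm estimates of Proposition~\ref{p:from_crossed_product_to_dyn_sys}, that $\integrate$ maps $\mr^\prime$ to non-degenerate contractive representations and that $\separate$ maps such representations back into $\mr^\prime$, so that the desired bijection is the restriction of the general one. The remaining preservation statements (invariant subspaces, intertwiners, involutivity) then follow immediately from Theorem~\ref{t:bijection}, exactly as in the paper.
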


\begin{proof}
We use Theorem~\ref{t:bijection}. Suppose that $\covrep \in \mr^\prime\subset\mr$, then certainly $\covrep$ is $\mr$-continuous, so that $\mr^\prime\subset \Covreprndc(\dynsys,\bclass)$. Hence the results of that theorem are applicable, and we must show that $\integrate$ and $\separate$ are bijections between $\mr^\prime$ and the non-degenerate contractive representations of $\crosprod$ in the elements of $\bclass$, which form a subset of $\Repndb(\crosprod,\bclass)$. Suppose that $\covrep \in \mr^\prime\subset\mr$, then $\integrate(\covrep)=\intformr$ is obviously contractive by the very definition of $\sigmar$ and $\crosprod$ in Definition~\ref{d:crossed_product}. Conversely, suppose that $T$ is a non-degenerate contractive representation of $\crosprod$ in a space from $\bclass$. In the notation of Remark~\ref{r:constantdefinitions_estimates}, we have $\Cr\leq 1$, and $M_l^A\leq 1$. By definition of $\nur$ we have $\nur \leq \nu$, so the condition on $\nu$ implies that $\Nr\leq 1$. From \eqref{e:upper_bound_estimate_for_bounded_approximate_left_identity}, we then conclude that $M_l^\mr\leq 1$. Thus $\separate(T)=(\overline{T} \circ i_A^\mr, \overline{T} \circ i_G^\mr)$ is not only known to be a non-degenerate covariant representation of $\dynsys$ in a space from $\bclass$ by Theorem~\ref{t:bijection}, but in addition we know from \eqref{e:pi_bound_from_crossed_product_to_dyn_sys_in_general_correspondence_section} that $\norm{\overline{T} \circ i_A^\mr} \leq 1$, and from \eqref{e:pointwise_U_bound_from_crossed_product_to_dyn_sys_in_general_correspondence_section} that $\norm{(\overline{T} \circ i_G^\mr)(r)} \leq \nur(r) \leq \nu(r)$, for all $r\in G$, so $\separate(T) \in \mr^\prime$. This settles the main part of the present theorem, and the rest is immediate from Theorem~\ref{t:bijection}.
\end{proof}

As a particular case of Theorem~\ref{t:general_bijection_general_system}, we let $\mr$ and $\mr^\prime$ coincide, and we specialize to $\nu \equiv 1$. Note that this condition $\norm{U_r} \leq 1$, for all $r \in G$, is equivalent to requiring that $U$ is isometric. Thus we obtain the following.

\begin{theorem}\label{t:isometry_correspondence}
Let $\dynsys$ be a Banach algebra dynamical system, where, for each $\eps >0$, $A$ has a $(1+\eps)$-bounded approximate left identity. Let $\bclass$ be a class of Banach spaces, and let $\mr$ consist of all non-degenerate continuous covariant representations $\covrep$ of $\dynsys$ in spaces from $\bclass$, such that $\pi$ is contractive and $U_r$ is an isometry, for all $r\in G$.

If $\mr$ is non-empty, then the map $\covrep \mapsto (\intform)^\mr$ is a bijection between $\mr$ and the non-degenerate contractive representations of $\crosprod$ in spaces from $\bclass$. This map preserves the set of closed invariant subspaces, as well as the Banach space of bounded intertwining operators between two elements of $\mr$.

If $\dynsys$ and $\mr$ are involutive, then this bijection preserves the property of being involutive.
\end{theorem}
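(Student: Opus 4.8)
The plan is to obtain this as the special case $\nu\equiv 1$, $\mr=\mr'$, of Theorem~\ref{t:general_bijection_general_system}. The only genuine point to verify is the equivalence, asserted in the paragraph preceding the statement, between the condition $\norm{U_r}\leq 1$ for all $r\in G$ and $U$ being isometric; once this is in hand, the present theorem is a transcription of the conclusion of Theorem~\ref{t:general_bijection_general_system} under the chosen specialization.

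First I would settle the equivalence. Let $U$ be a representation of $G$ on a Banach space $X$, so that each $U_r\in\Inv(X)$ with $U_r^{-1}=U_{r^{-1}}$ (using $U_e=\id_X$). If $\norm{U_s}\leq 1$ for all $s\in G$, then for every $r\in G$ and $x\in X$,
\[
\norm{x}=\norm{U_{r^{-1}}U_r x}\leq\norm{U_{r^{-1}}}\,\norm{U_r x}\leq\norm{U_r x}\leq\norm{U_r}\,\norm{x}\leq\norm{x},
\]
so all inequalities are equalities and $\norm{U_r x}=\norm{x}$; hence $U_r$ is an isometry. Conversely, if each $U_r$ is an isometry, then trivially $\norm{U_r}\leq 1$. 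Therefore the class $\mr$ of the present theorem---all non-degenerate continuous covariant representations $\covrep$ in spaces from $\bclass$ with $\pi$ contractive and each $U_r$ isometric---coincides with the class $\mr'$ of Theorem~\ref{t:general_bijection_general_system} obtained by taking $\norm{U_r}\leq\nu(r)$ with $\nu\equiv 1$.

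Next I would check the hypotheses of Theorem~\ref{t:general_bijection_general_system} for this choice. Since $G$ is locally compact, it admits a neighbourhood basis $\mathcal Z$ of $e$ all of whose elements are contained in a fixed compact set. The constant function $\nu\equiv 1$ is bounded on compacta, and $\inf_{V\in\mathcal Z}\sup_{r\in V}\nu(r)=\inf_{V\in\mathcal Z}1=1$, as required. The hypothesis on $A$ (a $(1+\eps)$-bounded approximate left identity for each $\eps>0$) is carried over unchanged, and setting $\mr=\mr'$ makes the containment $\mr'\subset\mr$ of Theorem~\ref{t:general_bijection_general_system} trivially satisfied.

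Applying Theorem~\ref{t:general_bijection_general_system} then yields, whenever $\mr$ is non-empty, that $\covrep\mapsto(\intform)^\mr$ is a bijection between $\mr$ and the non-degenerate contractive representations of $\crosprod$ in spaces from $\bclass$, preserving the set of closed invariant subspaces, the Banach space of bounded intertwining operators, and (in the involutive case) the property of being involutive---precisely the assertion of the present theorem. I expect no real obstacle: the entire content lies in the short norm computation establishing the contractive--isometric equivalence, and everything else is bookkeeping of the specialization.
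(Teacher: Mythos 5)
Your proposal is correct and is exactly the paper's own route: the paper obtains this theorem by specializing Theorem~\ref{t:general_bijection_general_system} to $\mr=\mr'$ and $\nu\equiv 1$, noting (as you verify in detail via $U_{r^{-1}}U_r=\id_X$) that $\norm{U_r}\leq 1$ for all $r\in G$ is equivalent to each $U_r$ being an isometry.
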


Specializing Theorem~\ref{t:isometry_correspondence} in turn to the involutive case yields the following. We recall from the third part of Remark~\ref{r:seminorm_remark} that $\crosprod$ is a $C^*$-algebra if $\dynsys$ and $\mr$ are involutive.

\begin{theorem}\label{t:involutive_correspondence}
Let $\dynsys$ be a Banach algebra dynamical system, where $A$ is a Banach algebra with bounded involution and where $G$ acts as involutive automorphisms on $A$. Assume that, for each $\eps >0$, $A$ has a $(1+\eps)$-bounded approximate left identity. Let $\mathcal H$ be a class of Hilbert spaces, and let $\mr$ consist of all non-degenerate continuous covariant representations $\covrep$ of $\dynsys$ in elements of $\mathcal H$, such that $\pi$ is contractive and involutive, and $U_r$ is unitary, for all $r\in G$.

If $\mr$ is non-empty, then the map $\covrep \mapsto (\intform)^\mr$ is a bijection between $\mr$ and the non-degenerate involutive representations of the $C^*$-algebra $\crosprod$ in spaces from $\mathcal H$. This map preserves the set of closed invariant subspaces, as well as the Banach space of bounded intertwining operators between two elements of $\mr$.
\end{theorem}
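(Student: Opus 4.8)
The plan is to obtain this as a restriction of the general correspondence Theorem~\ref{t:bijection}, rather than to read it off Theorem~\ref{t:isometry_correspondence} directly. The point is that, for $\crosprod$ to be a $C^*$-algebra, Remark~\ref{r:seminorm_remark} forces it to be built from the \emph{involutive} class $\mr$; this class is a proper subclass of the full contractive/isometric class underlying Theorem~\ref{t:isometry_correspondence}, so the two crossed products generally differ and the earlier theorem does not literally apply to the same algebra. Instead, I would verify that the present $\mr$ is non-empty and uniformly bounded (take $C=1$ and $\nu\equiv 1$ in Definition~\ref{d:uniformly_bounded_class}), that $A$ has a bounded approximate left identity, and that $\dynsys$ and $\mr$ are involutive, so that $\crosprod$ is a $C^*$-algebra by Remark~\ref{r:seminorm_remark} and Theorem~\ref{t:bijection} supplies mutually inverse bijections $\integrate$ and $\separate$ between $\Covreprndc(\dynsys,\mathcal H)$ and $\Repndb(\crosprod,\mathcal H)$ which preserve involutivity, closed invariant subspaces, and bounded intertwining operators. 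Everything then reduces to matching the involutive objects on each side with the two classes in the statement.

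On the crossed product side, I would note that a non-degenerate involutive representation of the $C^*$-algebra $\crosprod$ on a Hilbert space is automatically contractive, hence bounded; thus the involutive elements of $\Repndb(\crosprod,\mathcal H)$ are exactly the non-degenerate involutive representations of $\crosprod$ in spaces from $\mathcal H$, which is the codomain in the statement.

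On the dynamical-system side, the inclusion of $\mr$ into the involutive elements of $\Covreprndc(\dynsys,\mathcal H)$ is immediate, since membership in $\mr$ already entails $\mr$-continuity with contractive integrated form. The reverse inclusion is the one genuinely nontrivial step. Given an involutive $\covrep$ in $\Covreprndc(\dynsys,\mathcal H)$, the only thing to check is that $\pi$ is contractive, as $U$ is unitary by involutivity. Here I would exploit the round trip $\covrep=\separate(\integrate(\covrep))$ from Theorem~\ref{t:bijection}: the representation $T:=\integrate(\covrep)=(\intform)^\mr$ is involutive, hence a contraction on its Hilbert space, and $\pi=\overline T\circ i_A^\mr$. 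The estimate \eqref{e:pi_bound_from_crossed_product_to_dyn_sys_in_general_correspondence_section} of Proposition~\ref{p:from_crossed_product_to_dyn_sys} gives $\norm{\pi}\le M_l^\mr\,\Cr\,\norm{T}$, while in the present situation $\Cr\le 1$, $\Nr\le 1$ and $M_l^A\le 1$, so that $M_l^\mr\le\Cr M_l^A\Nr\le 1$ by \eqref{e:upper_bound_estimate_for_bounded_approximate_left_identity}; combined with $\norm{T}\le 1$ this forces $\norm{\pi}\le 1$, whence $\covrep\in\mr$.

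With both identifications in hand, the involution-preserving bijections of Theorem~\ref{t:bijection} restrict to the asserted mutually inverse bijections between $\mr$ and the non-degenerate involutive representations of $\crosprod$ in $\mathcal H$, and the preservation of closed invariant subspaces and of the Banach space of bounded intertwining operators is inherited verbatim. I expect the main obstacle to be precisely the reverse inclusion above: forcing an a priori merely bounded involutive $\pi$ to be contractive, which is exactly where the $C^*$-structure of $\crosprod$ (making $T$ a contraction) has to be fed back through the norm bookkeeping of Section~\ref{sec:from_crosprod_to_dynsys}.
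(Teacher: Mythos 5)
Your proposal is correct, and in substance it is the paper's own argument made explicit: the paper offers no separate proof of this theorem, only the sentence that it results from ``specializing Theorem~\ref{t:isometry_correspondence} in turn to the involutive case'', and when that specialization is unwound rigorously it becomes exactly what you wrote. Your framing is, if anything, the more careful one. You are right that Theorem~\ref{t:isometry_correspondence} cannot be cited verbatim: there $\mr$ is the class of \emph{all} non-degenerate continuous covariant representations with $\pi$ contractive and $U$ isometric in spaces from $\bclass$, which in general properly contains the involutive class (for a Banach algebra with merely bounded involution, a contractive non-degenerate representation on a Hilbert space need not be involutive), so the two classes induce a priori different seminorms and hence different crossed products, and only the involutive class makes $\crosprod$ a $C^*$-algebra via Remark~\ref{r:seminorm_remark}. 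What the paper really intends is that the \emph{proof} of Theorem~\ref{t:general_bijection_general_system} (of which Theorem~\ref{t:isometry_correspondence} is the case $\mr=\mr^\prime$, $\nu\equiv 1$) be rerun with the involutive class, and that is precisely your argument: Theorem~\ref{t:bijection} gives the involution-preserving mutually inverse bijections $\integrate$ and $\separate$; the codomain is identified using the automatic contractivity of involutive representations of the $C^*$-algebra $\crosprod$; and the one genuinely nontrivial inclusion---that an involutive element $\covrep$ of $\Covreprndc(\dynsys,\mathcal H)$ has contractive $\pi$---follows from $\norm{T}\leq 1$ for $T=\integrate(\covrep)$ together with \eqref{e:pi_bound_from_crossed_product_to_dyn_sys_in_general_correspondence_section} and \eqref{e:upper_bound_estimate_for_bounded_approximate_left_identity}, since $\Cr\leq 1$, $M_l^A\leq 1$, and $\Nr\leq 1$ here. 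That round-trip norm bookkeeping is exactly the mechanism in the paper's proof of Theorem~\ref{t:general_bijection_general_system}, so your route buys the same conclusion while also identifying, and repairing, the looseness in the paper's one-line derivation.
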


\begin{remark}\label{r:cstar_remark}
Note that Theorem~\ref{t:involutive_correspondence} applies to all $C^*$-dynamical systems, since then $A$ has a 1-bounded approximate left identity. In that case, if $\mathcal R$ is non-empty, then $\crosprod$ can be considered as the $C^*$-crossed product associated with the $C^*$-dynamical system $\dynsys$ and the Hilbert spaces from $\mathcal H$. If $\mathcal H$ consists of \emph{all} Hilbert spaces, then the associated $C^*$-algebra $\crosprod$ is commonly known as \emph{the} crossed product $A\rt_\alpha G$, as in \cite{williams}. Surely $\mr$ is then non-empty, since it contains the zero representation on the zero space. However, more is true: the Gelfand-Naimark theorem furnishes a faithful non-degenerate involutive representation of $A$ in a Hilbert space, and then \cite[Lemma~2.26]{williams} provides a covariant involutive representation of $\dynsys$ (which is non-degenerate by \cite[Lemma~2.17]{williams}), of which the integrated form is a faithful representation of $C_c(G,A)$. As a consequence, $\sigmar$ is then an algebra norm on $C_c(G,A)$, rather than a seminorm, and the quotient construction as in the present paper for the general case is then not necessary.
\end{remark}

We conclude this section with a preparation for the sequel \cite{crossedtwo}, where we will show that under certain conditions $\crosprod$ is (isometrically) isomorphic to the Banach algebra $L^1(G,A)$ with a twisted convolution product. With this in place, we will then also be able to show how well-known results about (bi)-modules for $L^1(G)$ (\cite[Assertion~VI.1.32]{helemskii}, \cite[Proposition~2.1]{johnson}) fit into the general framework of crossed products of Banach algebras.

The preparatory result we will then require is the following; the function $\nur$ figuring in it is defined in Remark~\ref{r:constantdefinitions_estimates}.

\begin{theorem}\label{t:L1_case}
 Let $\dynsys$ be a Banach algebra dynamical system, where $A$ has a bounded approximate left identity. Let $D \geq 0$, and let $\mr$ be a non-empty class of non-degenerate continuous covariant representations $\covrep$ of $\dynsys$, such that $\nur(r) \leq D$, for all $r\in G$. Assume that there exists $C_1\geq 0$ such that $\norm{f}_{L^1(G,A)} \leq C_1 \sup_{\covrep\in\mr}\norm{\intform(f)}$, for all $f\in C_c(G,A)$.

Let $\bclass$ be a class of Banach spaces. Then the map $\covrep \mapsto (\intform)^\mr$ is a bijection between the non-degenerate continuous covariant representations of $\dynsys$ in spaces from $\bclass$ for which there exists a constant $C_U$, such that $\norm{U_r}\leq C_U$, for all $r\in G$, and the non-degenerate bounded representations of $\crosprod$ in spaces from $\bclass$. This map preserves the set of closed invariant subspaces, as well as the Banach space of bounded intertwining operators between two elements of $\mr$.

If $\dynsys$ and $\mr$ are involutive, then this bijection preserves the property of being involutive.
\end{theorem}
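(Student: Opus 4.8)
The plan is to derive everything from the general correspondence in Theorem~\ref{t:bijection}. The very appearance of $\nur$ and of $\crosprod$ in the hypotheses presupposes that $\mr$ is uniformly bounded; together with the standing assumptions that $\mr$ is non-empty, consists of non-degenerate continuous covariant representations, and that $A$ has a bounded approximate left identity, this puts all hypotheses of Theorem~\ref{t:bijection} in force. Hence $\integrate$ and $\separate$ are already known to be mutually inverse bijections between $\Covreprndc(\dynsys,\bclass)$ and $\Repndb(\crosprod,\bclass)$, automatically preserving closed invariant subspaces, bounded intertwining operators, and---in the involutive case---the property of being involutive. The entire theorem will therefore follow once I identify the domain $\Covreprndc(\dynsys,\bclass)$, i.e.\ the non-degenerate $\mr$-continuous covariant representations in spaces from $\bclass$, with the set described in the statement: the non-degenerate continuous covariant representations $\covrep$ on spaces from $\bclass$ for which some constant $C_U$ satisfies $\norm{U_r}\leq C_U$ for all $r\in G$. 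This amounts to the equivalence ``$\covrep$ is $\mr$-continuous $\iff$ $U$ is uniformly bounded'', and the two hypotheses of the theorem will supply the two implications separately.

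For ``$U$ uniformly bounded $\Rightarrow$ $\mr$-continuous'' I would argue by a direct estimate. Given a non-degenerate continuous covariant representation $\covrep$ on $X\in\bclass$ with $\norm{U_r}\leq C_U$ for all $r$, and writing $C_\pi:=\norm{\pi}$, I have for every $f\in C_c(G,A)$
\[
\norm{\intform(f)}=\norm{\int_G \pi(f(s))U_s\ds}\leq C_\pi\,C_U\,\norm{f}_{L^1(G,A)}\leq C_\pi\,C_U\,C_1\,\sigmar(f),
\]
where the last inequality uses the hypothesis $\norm{f}_{L^1(G,A)}\leq C_1\sup_{\covrep\in\mr}\norm{\intform(f)}$ together with $\sigmar(f)=\sup_{\covrep\in\mr}\norm{\intform(f)}$ from Definition~\ref{d:crossed_product}. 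Thus $\intform$ is $\sigmar$-bounded, so $\covrep$ is $\mr$-continuous. This is precisely the step that consumes the $L^1$-estimate hypothesis, and it is worth noting that $C_\pi$ and $C_U$ are allowed to depend on the individual $\covrep$, which is all that $\mr$-continuity requires.

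For the converse, ``$\mr$-continuous $\Rightarrow$ $U$ uniformly bounded'', I would run $\covrep$ through the correspondence. If $\covrep\in\Covreprndc(\dynsys,\bclass)$ then $T:=\integrate(\covrep)=\intformr$ is a non-degenerate bounded representation of $\crosprod$, and Theorem~\ref{t:bijection} gives $\separate(T)=\covrep$, so in particular $U=\overline{T}\circ i_G^\mr$. The pointwise estimate of Proposition~\ref{p:from_crossed_product_to_dyn_sys} then yields, for each $r\in G$,
\[
\norm{U_r}=\norm{(\overline{T}\circ i_G^\mr)(r)}\leq M_l^\mr\,\norm{T}\,\nur(r)\leq M_l^\mr\,\norm{T}\,D,
\]
the final step invoking the hypothesis $\nur(r)\leq D$. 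Hence $U$ is uniformly bounded, with $C_U:=M_l^\mr\norm{T}D$ admissible; this is where the bound on $\nur$ does its work.

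Combining the two implications identifies $\Covreprndc(\dynsys,\bclass)$ with the set in the statement, whereupon the bijection, the preservation of closed invariant subspaces and of bounded intertwining operators, and the involutive statement all follow immediately from Theorem~\ref{t:bijection}. I do not expect a deep obstacle: the entire content is the equivalence above, and each direction is a one-line estimate once the machinery of Sections~\ref{sec:from_dynsys_to_crosprod}--\ref{sec:general_correspondence} is granted. The only point needing a little care is the bookkeeping at the outset---confirming that the hypotheses really do guarantee the uniform boundedness of $\mr$ needed to invoke Theorem~\ref{t:bijection}---after which the proof is essentially a matter of matching definitions.
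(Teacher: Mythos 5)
Your proposal is correct and follows essentially the same route as the paper: both directions rest on exactly the two estimates the paper uses, namely \eqref{e:sigmabound} combined with the $L^1$ hypothesis to get $\mr$-continuity from uniform boundedness of $U$, and the bound $\norm{(\overline{T}\circ i_G^\mr)(r)}\leq M_l^\mr\norm{T}\nur(r)\leq M_l^\mr\norm{T}D$ from Proposition~\ref{p:from_crossed_product_to_dyn_sys} for the converse, with Theorem~\ref{t:bijection} supplying the bijection and the preservation statements. Your phrasing of the argument as ``identify $\Covreprndc(\dynsys,\bclass)$ with the set in the statement'' is just a repackaging of the paper's ``show $\integrate$ and $\separate$ map the two sets into each other,'' and your explicit remark that the hypotheses presuppose uniform boundedness of $\mr$ is a point the paper leaves implicit.
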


\begin{proof}
 We apply Theorem~\ref{t:bijection} and show that $\integrate$ and $\separate$ map the sets of (covariant) representations as described in the present theorem into each other. Suppose $\covrep$ is a non-degenerate continuous covariant representations of $\dynsys$ in a space from $\bclass$ for which $U$ is uniformly bounded. Then by \eqref{e:sigmabound}, the assumption implies that, for all $f \in C_c(G,A)$,
\[ \norm{\intform(f)} \leq \norm{\pi} D \norm{f}_{L^1(G,A)} \leq \norm{\pi} D C_1 \sigmar(f), \]
and so $\intform$ is $\mr$-continuous and hence induces a non-degenerate bounded representation $\intformr$ of $\crosprod$.

Conversely, let $T$ be a non-degenerate bounded representation of $\crosprod$ in a space from $\bclass$. Then $(\overline{T} \circ i_A^\mr, \overline{T} \circ i_G^\mr)$ is not only known to be a non-degenerate continuous covariant representation $\covrep$ of $\dynsys$, by Theorem~\ref{t:bijection}, but in addition \eqref{e:pointwise_U_bound_from_crossed_product_to_dyn_sys_in_general_correspondence_section}, together with $\nur(r) \leq D$ for all $r \in G$, shows that $\overline{T} \circ i_G$ is uniformly bounded.
\end{proof}

\begin{remark}
Under the hypotheses of Theorem~\ref{t:L1_case}, \eqref{e:sigmabound} implies that we also have $\sigmar(f)\leq \Cr D \norm{f}_{L^1(G,A)}$, so that $\sigmar$ and $\norm{\,.\,}_{L^1(G,A)}$ are equivalent algebra norms on $C_c(G,A)$. As a consequence, $\crosprod$ and $L^1(G,A)$ are isomorphic Banach algebras, and the non-degenerate continuous covariant representations $\covrep$ in spaces from $\bclass$, as described in the theorem, are in bijection with the non-degenerate bounded representations of $L^1(G,A)$ in the elements of $\bclass$. The questions when the condition $\norm{f}_{L^1(G,A)} \leq C_1 \sigmar(f)$ is actually satisfied, and when $\crosprod$ and $L^1(G,A)$ are even isometrically isomorphic Banach algebras, will be tackled in the sequel (\cite{crossedtwo}), to which we also postpone further discussion.
\end{remark}

\subsection{Trivial algebra: group Banach algebras}\label{subsec:trivial_algebra}

We now specialize the results of Theorem~\ref{t:general_bijection_general_system} to the case where the algebra is equal to the field $\mathbb K$, and the group acts trivially on it. We start by making some preliminary remarks.

The general representation of $\mathbb K$ in a Banach space $X$ is given by letting $\lambda\in \mathbb K$ act as $\lambda P$, where $P\in B(X)$ is an idempotent. Therefore, the only non-degenerate representation of $\mathbb K$ in $X$ is the canonical one, $\textup{can}_X:\mathbb K\to B(X)$, obtained for $P=\id_X$. As a consequence, the non-degenerate covariant representations of $(\mathbb K,G,\triv)$ in a given Banach space $X$ are in bijection with the strongly continuous representations of $G$ in that Banach space, by letting $(\textup{can}_X,U)$ correspond to $U$. Likewise, the non-degenerate involutive continuous covariant representations of $(\mathbb K,G,\triv)$ in a given Hilbert space are in bijection with the unitary strongly continuous representations of $G$ in that Hilbert space.

The Banach algebra dynamical system $(\mathbb K,G,\triv)$ has a non-degenerate continuous covariant representation in each Banach space $X$, with $G$ acting as isometries, namely, by letting the field act as scalars and letting the group act trivially. Likewise, there is a non-degenerate involutive continuous covariant representation of $(\mathbb K,G,\triv)$ in each Hilbert space, with $G$ acting as unitaries. Therefore, the hypothesis in the theorems in Section~\ref{subsec:general_algebra_and_group} that certain classes of non-degenerate continuous covariant representations are non-empty is sometimes redundant. Furthermore, the hypothesis on the existence of a suitable bounded approximate left identity in $\mathbb K$ is obviously always satisfied.

We introduce some shorthand notation. If $\mr$ is a class of strongly continuous representations of $G$, then $\widetilde{\mr}:=\{(\textup{can}_{X_U},U) : U\in\mr\}$ is a uniformly bounded class of continuous covariant representations of $(\mathbb K,G,\triv)$ precisely if there exists a function $\nu:G\to [0,\infty)$, which is bounded on compact subsets of $G$, and such that $\norm{U_r}\leq \nu(r)$, for all $U\in\mr$, and all $r\in G$. In that case, the associated crossed product $(\mathbb K\rt_{\triv} G)^{\widetilde{\mr}}$ is defined, but we will write $(\mathbb K\rt_{\triv} G)^\mr$ for short. Thus $(\mathbb K\rt_{\triv} G)^\mr$ is obtained by starting with $C_c(G)$ in its usual convolution structure, introducing the seminorm
\[
\sigmar(f)=\sup_{U\in\mr}\norm{\int_G f(s)U_s\ds}\quad(f\in C_c(G)),
\]
and completing $C_c(G)/\textup{ker}(\sigmar)$ in the norm induced on this quotient by $\sigmar$. As before, we let $\qr$ denote the canonical map from $C_c(G)$ into $(\mathbb K\rt_{\triv} G)^\mr$. If $U$ is a strongly continuous representation of $G$, then we let $U(f)=\int_G f(s)U_s\ds$, which corresponds to $\textup{can}_{X_U}\rtimes U(f)$ in the previous sections. Then $U$ will be called $\mr$-continuous if there exists a constant $C$ such that $\norm{U(f)}\leq C\sigmar(f)$, for all $f\in C_c(G)$; this corresponds to $\textup{can}_{X_U}\rtimes U$ being $\widetilde R$-continuous. In that case, there is an associated bounded representation of $(\mathbb K\rt_{\triv} G)^\mr$, denoted by $U^\mr$ rather than $(\textup{can}_{X_U}\rtimes U)^\mr$, which is given on the dense subalgebra $\qr(C_c(G))$ of $(\mathbb K\rt_{\triv} G)^\mr$ by
\[
U^\mr(\qr(f)) = \int_G f(s) U_s \ds\quad (f \in C_c(G)).
\]

With these notations, Theorems~\ref{t:general_bijection_general_system} specializes to the following result.

\begin{theorem}\label{t:general_bijection_trivial_algebra}
Let $G$ be a locally compact group. Let $\mathcal{Z}$ be a neighbourhood basis of $e \in G$ contained in a fixed compact set, let $\nu: G \to [0, \infty)$ be bounded on compact sets and satisfy $\inf_{V \in \mathcal{Z}} \sup_{r \in V} \nu(r) = 1$. Let $\mr$ be a non-empty class of strongly continuous representations of $G$ on Banach spaces, such that, for $U\in\mr$, $\norm{U_r} \leq \nu(r)$, for all $r\in G$.

Let $\bclass$ be a class of Banach spaces, and suppose that $\mr$ contains the class $\mr^\prime$, consisting of all strongly continuous representations $U$ of $G$ in spaces from $\bclass$, such that $\norm{U_r}\leq \nu(r)$, for all $r\in G$.

Then the map which sends $U\in\mr^\prime$ to $U^\mr$ is a bijection between $\mr^\prime$ and the non-degenerate contractive representations of $(\mathbb K\rt_{\triv} G)^\mr$ in the Banach spaces from $\mathcal X$. This map preserves the set of closed invariant subspaces, as well as the Banach space of bounded intertwining operators between two elements of $\mr^\prime$.

If all elements from $\mr$ are unitary strongly continuous representations, then this bijection lets unitary strongly continuous representations of $G$ in $\mr^\prime$ correspond to involutive representations of the $C^*$-algebra $(\mathbb C\rt_{\triv} G)^\mr$.

\end{theorem}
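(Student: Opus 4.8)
The plan is to obtain the final statement as an involutive refinement of the bijection $U \mapsto U^\mr$ established in the first part of the theorem, by invoking the involutive clauses already present in Theorem~\ref{t:summary_from_dyn_sys_to_crossed_product} and Theorem~\ref{t:r_affiliated_from_crossed_product_to_dyn_sys}. First I would record the ambient structure. With $\mathbb K = \mathbb C$, the system $(\mathbb C, G, \triv)$ is an involutive Banach algebra dynamical system, since complex conjugation is an isometric involution on $\mathbb C$ and $\triv$ acts by involutive automorphisms. When every $U \in \mr$ is a unitary representation on a Hilbert space, the class $\widetilde{\mr} = \{(\textup{can}_{X_U}, U) : U \in \mr\}$ is involutive: each $\textup{can}_{X_U}$ is involutive because $(\lambda\,\id_{X_U})^* = \overline\lambda\,\id_{X_U} = \textup{can}_{X_U}(\overline\lambda)$, and each $U$ is unitary. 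By Remark~\ref{r:seminorm_remark}(iii) it then follows that $(\mathbb C \rt_{\triv} G)^\mr$ is a $C^*$-algebra, so that the target of the statement is meaningful.

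The key point, which is purely definitional, is that for the trivial algebra a non-degenerate continuous covariant representation $(\textup{can}_X, U)$ of $(\mathbb C, G, \triv)$ on a Hilbert space $X$ is involutive precisely when $U$ is unitary, the involutivity of $\textup{can}_X$ being automatic as above. Thus the unitary strongly continuous representations $U \in \mr^\prime$ on Hilbert spaces from $\bclass$ are exactly those whose associated covariant representations $(\textup{can}_{X_U}, U)$ are involutive.

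With this identification in hand, I would argue both directions. For the forward direction, given a unitary $U \in \mr^\prime$ on a Hilbert space $X_U \in \bclass$, the covariant representation $(\textup{can}_{X_U}, U)$ is involutive and $\mr$-continuous, so since $(\mathbb C, G, \triv)$, $\widetilde{\mr}$, and $(\textup{can}_{X_U}, U)$ are all involutive, part (iv) of Theorem~\ref{t:summary_from_dyn_sys_to_crossed_product} shows that its image $U^\mr = (\textup{can}_{X_U} \rt U)^{\widetilde{\mr}}$ is an involutive representation of the $C^*$-algebra $(\mathbb C \rt_{\triv} G)^\mr$. For the converse, given a non-degenerate involutive representation $T$ of this $C^*$-algebra on a Hilbert space from $\bclass$, I would first note that $T$ is automatically contractive, being an involutive representation of a $C^*$-algebra, and hence lies in the codomain of the bijection from the first part. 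Its preimage is $\separate(T) = (\overline T \circ i_A^\mr, \overline T \circ i_G^\mr)$, and since $(\mathbb C, G, \triv)$, $\widetilde{\mr}$, and $T$ are involutive, the involutive clause of Theorem~\ref{t:r_affiliated_from_crossed_product_to_dyn_sys} (resting on Proposition~\ref{p:from_crossed_product_to_dyn_sys}) guarantees that $\separate(T)$ is involutive; by the definitional observation, its group part $\overline T \circ i_G^\mr$ is then a unitary strongly continuous representation, which therefore belongs to $\mr^\prime$.

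Combining the two directions shows that the bijection of the first part restricts to a bijection between the unitary strongly continuous representations in $\mr^\prime$ and the non-degenerate involutive representations of $(\mathbb C \rt_{\triv} G)^\mr$ on Hilbert spaces from $\bclass$. I do not expect a serious obstacle here, since the substantive work is carried out in the cited theorems; the one point requiring care is the verification that the abstract notion of an involutive covariant representation of $(\mathbb C, G, \triv)$ unwinds to exactly ``$U$ unitary on a Hilbert space,'' together with the standard fact that involutive representations of a $C^*$-algebra are contractive, so that these representations genuinely sit inside the codomain of the already-proven bijection rather than forming a separate family.
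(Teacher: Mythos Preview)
Your argument for the involutive clause is correct, but it takes a longer route than the paper. In the paper, the entire theorem---the main bijection and the involutive refinement together---is obtained in one stroke as a specialization of Theorem~\ref{t:general_bijection_general_system} to the system $(\mathbb K, G, \triv)$, using the dictionary set up at the start of Section~\ref{subsec:trivial_algebra}: non-degenerate continuous covariant representations of $(\mathbb K, G, \triv)$ correspond to strongly continuous representations of $G$, with the involutive ones corresponding exactly to the unitary ones. Since Theorem~\ref{t:general_bijection_general_system} already carries the clause that the bijection preserves the property of being involutive when $\dynsys$ and $\mr$ are involutive, the final statement here requires no additional argument beyond the translation.

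Your approach instead bypasses Theorem~\ref{t:general_bijection_general_system} and reaches back to its underlying ingredients, Theorems~\ref{t:summary_from_dyn_sys_to_crossed_product} and~\ref{t:r_affiliated_from_crossed_product_to_dyn_sys}, reassembling the involutive preservation by hand. The details you supply---that $\textup{can}_X$ is automatically involutive on a Hilbert space, and that an involutive representation of a $C^*$-algebra is automatically contractive, so that an involutive $T$ genuinely lies in the codomain of the first-part bijection---are all correct and are exactly the observations one needs. But these points are already absorbed into the proof of Theorem~\ref{t:general_bijection_general_system} (via Theorem~\ref{t:bijection}), so citing that theorem directly yields the result without unpacking them again.
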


Specializing the above result to $\mr=\mr^\prime$ and $\nu\equiv 1$ (or Theorem~\ref{t:isometry_correspondence} to the case of the trivial algebra), we obtain the following.

\begin{theorem}\label{t:group_algebra_isometric_case} Let $G$ be a locally compact group.
Let $\mathcal X$ be a non-empty class of Banach spaces, and let $\mathcal{R}$ be the class of all isometric strongly continuous representations of $G$ in spaces from $\mathcal X$. Then the map which sends $U\in\mathcal R$ to $U^\mr$ is a bijection between $\mathcal R$ and the non-degenerate contractive representations of the Banach algebra $(\mathbb K\rt_{\triv} G)^\mr$ in the Banach spaces from $\mathcal X$. This map preserves the set of closed invariant subspaces, as well as the Banach space of bounded intertwining operators.
\end{theorem}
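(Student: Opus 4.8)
The plan is to deduce the statement from Theorem~\ref{t:general_bijection_trivial_algebra} by the specialization announced in the sentence immediately preceding the theorem, namely taking $\nu\equiv 1$ and letting both classes $\mr$ and $\mr^\prime$ occurring there equal the class $\mathcal R$ of the present statement. First I would verify that $\nu\equiv 1$ is an admissible choice: it is trivially bounded on compact subsets of $G$, and for any neighbourhood basis $\mathcal Z$ of $e$ one has $\sup_{r\in V}\nu(r)=1$ for every $V\in\mathcal Z$, so that $\inf_{V\in\mathcal Z}\sup_{r\in V}\nu(r)=1$, exactly as required in Theorem~\ref{t:general_bijection_trivial_algebra}.

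Next I would identify the class $\mr^\prime$ belonging to this choice of $\nu$ with $\mathcal R$. By definition, $\mr^\prime$ consists of all strongly continuous representations $U$ of $G$ in spaces from $\mathcal X$ satisfying $\norm{U_r}\leq\nu(r)=1$ for all $r\in G$. The one point deserving an argument is that this uniform bound is equivalent to each $U_r$ being an isometry: since $U$ is a representation, $U_r$ is invertible with inverse $U_{r^{-1}}$, and combining $\norm{U_r}\leq 1$ with $\norm{U_{r^{-1}}}\leq 1$ gives
\[
\norm{x}=\norm{U_{r^{-1}}U_r x}\leq\norm{U_{r^{-1}}}\,\norm{U_r x}\leq\norm{U_r x}\leq\norm{U_r}\,\norm{x}\leq\norm{x}
\]
for every $x$, forcing $\norm{U_r x}=\norm{x}$; the converse is immediate. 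Hence $\mr^\prime$ is exactly the class $\mathcal R$ of all isometric strongly continuous representations of $G$ in spaces from $\mathcal X$, and choosing $\mr=\mr^\prime=\mathcal R$ renders the containment $\mr^\prime\subset\mr$ trivially true.

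Before applying the theorem I would check that $\mathcal R$ is non-empty, the only hypothesis of Theorem~\ref{t:general_bijection_trivial_algebra} that is not purely formal here. As recalled in the preliminary remarks opening this subsection, each Banach space $X$ carries the trivial representation $r\mapsto\id_X$, which is strongly continuous and isometric; since $\mathcal X$ is non-empty, so is $\mathcal R$. With $\nu\equiv 1$, $\mr=\mr^\prime=\mathcal R$, and $\bclass=\mathcal X$, Theorem~\ref{t:general_bijection_trivial_algebra} then applies, and its conclusion is—after unravelling the notation $U^\mr$ fixed in this subsection—precisely the asserted bijection between $\mathcal R$ and the non-degenerate contractive representations of $(\mathbb K\rt_{\triv}G)^\mr$ in the spaces from $\mathcal X$, together with the preservation of closed invariant subspaces and of the Banach space of bounded intertwining operators.

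I expect no genuine obstacle, since the entire content is the specialization itself. The only substantive observation is the equivalence of the uniform bound $\norm{U_r}\leq 1$ with the isometry of $U$ displayed above, and the only hypothesis one must be careful not to overlook is the non-emptiness of $\mathcal R$, which the trivial representations supply.
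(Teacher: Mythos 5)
Your proposal is correct and follows exactly the paper's route: the paper obtains this theorem by specializing Theorem~\ref{t:general_bijection_trivial_algebra} to $\mr=\mr^\prime$ and $\nu\equiv 1$ (the isometry equivalence you verify is the same observation the paper makes before Theorem~\ref{t:isometry_correspondence}, and the non-emptiness of $\mathcal R$ via trivial representations is noted in the paper's preliminary remarks to the subsection). Your write-up merely makes these verifications explicit, which is fine.
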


The following is a consequence of Theorem~\ref{t:involutive_correspondence}.

\begin{theorem}\label{t:group_algebra_involutive_case}
Let $G$ be a locally compact group, let $\mathcal H$ be a non-empty class of Hilbert spaces, and let $\mathcal R$ consists of all unitary strongly continuous representations of $G$ in the Hilbert spaces from $\mathcal H$.

Then the map which sends $U\in\mathcal R$ to $U^\mr$ is a bijection between $\mathcal R$ and the non-degenerate involutive representations of the $C^*$-algebra $(\mathbb K\rt_{\triv} G)^\mr$  in the Hilbert spaces from $\mathcal H$. This map preserves the set of closed invariant subspaces, as well as the Banach space of bounded intertwining operators between two elements of $\mr$.
\end{theorem}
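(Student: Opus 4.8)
The plan is to derive this theorem as a direct specialization of Theorem~\ref{t:involutive_correspondence} to the involutive Banach algebra dynamical system $(\C, G, \triv)$, combined with the identifications recorded in the preliminary remarks of this subsection. Since we are in the involutive setting the scalar field is $\C$, so $\mathbb K = \C$ here.

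First I would check that $(\C, G, \triv)$ satisfies all the hypotheses of Theorem~\ref{t:involutive_correspondence}. The algebra $A = \C$ carries the bounded (in fact isometric) involution given by complex conjugation, and the trivial action $\triv$ consists of the identity automorphism, which is involutive; moreover $1 \in \C$ is a $1$-bounded, hence for every $\eps > 0$ a $(1 + \eps)$-bounded, approximate left identity. The non-emptiness hypothesis is satisfied because $\mathcal H$ is non-empty: as noted in the preliminary remarks, every Hilbert space carries a non-degenerate involutive continuous covariant representation of $(\C, G, \triv)$ with $G$ acting by unitaries.

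Next I would identify the class $\mr$ figuring in Theorem~\ref{t:involutive_correspondence} for this system with the class $\mathcal R$ of the present theorem. By the preliminary remarks, the only non-degenerate representation of $\C$ on a Hilbert space $X$ is the canonical one $\textup{can}_X$, which is automatically contractive and involutive. Hence the non-degenerate involutive continuous covariant representations of $(\C, G, \triv)$ in spaces from $\mathcal H$ with $\pi$ contractive and involutive and each $U_r$ unitary are exactly the pairs $(\textup{can}_X, U)$ with $U$ a unitary strongly continuous representation of $G$ on some $X \in \mathcal H$; the assignment $(\textup{can}_X, U) \mapsto U$ is a bijection of this class onto $\mathcal R$. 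Under this bijection the class $\mr$ of Theorem~\ref{t:involutive_correspondence} is precisely $\widetilde{\mr}$, so that the crossed product $\crosprod$ appearing there is, by the shorthand introduced above, the $C^*$-algebra $(\mathbb K \rt_{\triv} G)^\mr$, and the integrated-form map $\covrep \mapsto (\intform)^\mr$ becomes the map $U \mapsto U^\mr$, since $U^\mr$ is by definition $(\textup{can}_{X_U} \rtimes U)^\mr$.

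With these identifications in place, Theorem~\ref{t:involutive_correspondence} yields directly that $U \mapsto U^\mr$ is a bijection between $\mathcal R$ and the non-degenerate involutive representations of $(\mathbb K \rt_{\triv} G)^\mr$ in the Hilbert spaces from $\mathcal H$, and that it preserves closed invariant subspaces and the Banach spaces of bounded intertwining operators. For the last two assertions one only has to note that a closed subspace is invariant for the pair $(\textup{can}_X, U)$ precisely when it is invariant for $U$ (it is automatically invariant for the scalar action $\textup{can}_X$), and likewise that a bounded operator intertwines $(\textup{can}_X, U)$ and $(\textup{can}_Y, V)$ precisely when it intertwines $U$ and $V$, so that preservation of these data for the covariant representations coincides with preservation for the group representations. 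There is no genuine analytic content beyond Theorem~\ref{t:involutive_correspondence} itself; the only point requiring care, and hence the main (though modest) obstacle, is this bookkeeping translating covariant representations of the trivial system into representations of $G$ and verifying that the two integrated-form constructions coincide.
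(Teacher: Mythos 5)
Your proposal is correct and is exactly the paper's route: the paper derives Theorem~\ref{t:group_algebra_involutive_case} as a direct consequence of Theorem~\ref{t:involutive_correspondence}, using the identifications from the preliminary remarks of Section~\ref{subsec:trivial_algebra} (the only non-degenerate representation of $\C$ is $\textup{can}_X$, which is contractive and involutive, so the covariant pairs $(\textup{can}_X,U)$ correspond bijectively to the unitary strongly continuous $U$, and invariance and intertwining for the pair reduce to those for $U$). Your verification of the hypotheses and the bookkeeping translating $\widetilde{\mr}$, $(\mathbb K\rt_\triv G)^\mr$ and $U^\mr$ matches what the paper leaves implicit.
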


\begin{remark}
The Banach algebra in Theorem~\ref{t:group_algebra_isometric_case} could be called the group Banach algebra $\mathcal B_{\mathcal X}(G)$ of $G$ associated with the (isometric strongly continuous representations of $G$ in the) Banach spaces from $\mathcal X$. As explained in the Introduction, these algebras, and their possible future role in decomposition theory for group representations, were part of the motivation underlying the present paper. The $C^*$-algebra in Theorem~\ref{t:group_algebra_involutive_case} is the group Banach algebra $\mathcal B_{\mathcal H}(G)$, which in this case has additional structure as a $C^*$-algebra. If $\mathcal H$ consists of all Hilbert spaces, then the group Banach algebra $\mathcal B_{\mathcal H}(G)$ is, of course, what is commonly known as $C^*(G)$, ``the'' group $C^*$-algebra of $G$.
\end{remark}

\subsection{Trivial group: enveloping algebras}\label{subsec:trivial_group}

We conclude with a few remarks on the case where the group is equal to the trivial group, $\{ e \}$, acting trivially on $A$. In this situation $C_c(\{e\}, A) \cong A$ as abstract algebras, so, if $\mr$ is a class of representations of $A$ in Banach spaces, for which there exists a constant $C$, such that $\norm{\pi}\leq C$, for all $\pi\in\mr$, then one naturally associates a uniformly bounded class of continuous covariant representations of $(A,\{e\},\triv)$ with $\mr$, and, with obvious notational convention, constructs the crossed product $(A \rt_\triv \{e\})^\mr$. This crossed product is simply the completion of $A / \ker(\sigmar)$ in the norm corresponding to the seminorm $\sigmar$ on $A$, defined by $\sigmar(a)=\sup_{\pi \in \mr}\norm{\pi(a)}$, for $a\in A$.

In principle, one could apply Theorem~\ref{t:general_bijection_general_system} in this situation, but then one would need to require $A$ to have a bounded left approximate identity. The reason underlying this is that, in general, there are no homomorphisms of the algebra or the group into the crossed product, so that the most natural idea to obtain representations of algebra and group from a representation of the crossed product, namely, to compose a given representation of the crossed product with such homomorphisms, will not work in general. In our approach in previous sections, the left centralizer algebra of the crossed product, into which the algebra and group do map, provided an alternative, but then one needs a bounded left approximate identity of the algebra, in order to be able to construct a representation of the left centralizer algebra from a (non-degenerate) representation of the crossed product. In the present case of a trivial group, however, one needs only a homomorphism of the algebra $A$ into the crossed product, and since this is a completion of $A / \ker(\sigmar)$, this clearly exists and the machinery we had to employ in previous sections is now not required. Also, the non-degeneracy of representations (needed to construct representations of the left centralizer algebra) is no longer an issue. One simply applies Lemma~\ref{l:completion_associated_with_seminorm}, and thus obtains the following elementary and well-known theorem for the crossed product $(A \rt_\triv \{e\})^\mr$, which we include for the sake of completeness.

\begin{theorem}
Let $A$ be a Banach algebra, and let $\mr$ be a non-empty uniformly bounded class of representations of $A$ in Banach spaces. Let $\sigmar(a)=\sup_{\pi \in \mr}\norm{\pi(a)}$, for $a\in A$, denoted the associated seminorm, and let $A^\mr$ be the completion of $A/\ker(\sigmar)$ in the norm corresponding to $\sigmar$ on $A$.

Let $\bclass$ be a non-empty class of Banach spaces, and say that a bounded representation $\pi$ of $A$ in a space from $\bclass$ is $\mr$-continuous if there exists a constant $C$, such that $\norm{\pi(a)}\leq C\sigmar(a)$, for all $a\in A$. In that case, define the norm of $\pi$ as the minimal such $C$.

Then the $\mr$-continuous representations of $A$ in the spaces from $\bclass$ correspond naturally with the bounded representations of $A^\mr$ in spaces from $\bclass$. This correspondence preserves the norms of the representations, the set of closed invariant subspaces, and the Banach spaces of bounded intertwining operator.

If $A$ has a \textup{(}possibly unbounded\textup{)} involution, and if all elements of $\mr$ are involutive bounded representations, then this natural correspondence sets up a bijection between the $\mr$-continuous involutive bounded representations of $A$ in the Hilbert spaces in $\bclass$, and the involutive representations of the $C^*$-algebra $A^\mr$ in those spaces.
\end{theorem}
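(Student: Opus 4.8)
The plan is to derive the entire statement from Lemma~\ref{l:completion_associated_with_seminorm} applied to $D=A$ equipped with the seminorm $\sigmar$; essentially all the substantive work has already been done there, and what remains is to verify the hypotheses of that lemma and to match its conclusions with the assertions of the theorem. First I would check that $\sigmar$ is a genuine (finite) algebra seminorm on $A$. Finiteness is immediate from the uniform boundedness of $\mr$: there is $C\geq 0$ with $\norm{\pi}\leq C$ for all $\pi\in\mr$, so $\sigmar(a)\leq C\norm{a}<\infty$. Submultiplicativity holds because each $\pi\in\mr$ is an algebra homomorphism into the submultiplicative algebra $B(X)$, giving $\norm{\pi(ab)}=\norm{\pi(a)\pi(b)}\leq \sigmar(a)\sigmar(b)$, whence $\sigmar(ab)\leq\sigmar(a)\sigmar(b)$ upon taking the supremum over $\pi$. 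Thus $(A,\sigmar)$ is a seminormed algebra and, by the algebra part of Lemma~\ref{l:completion_associated_with_seminorm}, $A^\mr=\overline{A/\ker(\sigmar)}^{\sigmar}$ is a Banach algebra with canonical quotient homomorphism $\qr\colon A\to A^\mr$.

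Next I would set up the two assignments and invoke the lemma. A bounded representation $\pi$ of $A$ in $X\in\bclass$ is $\mr$-continuous precisely when, viewed as a map $\pi\colon (A,\sigmar)\to B(X)$, it is $\sigmar$-bounded; its $\mr$-norm is by definition the $\sigmar$-seminorm of $\pi$. Taking $E=B(X)$, a Banach space, in Lemma~\ref{l:completion_associated_with_seminorm}, the assignment $\pi\mapsto\pi^\mr$, characterised by $\pi^\mr\circ\qr=\pi$, produces a bounded representation of $A^\mr$ in $X$ with $\norm{\pi^\mr}$ equal to the $\mr$-norm of $\pi$. In the reverse direction, a bounded representation $\rho$ of $A^\mr$ in $X$ yields the $\mr$-continuous representation $\rho\circ\qr$ of $A$, since $\norm{(\rho\circ\qr)(a)}\leq\norm{\rho}\,\sigmar(a)$. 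The isometry clause of the lemma asserts that $\pi\mapsto\pi^\mr$ is a norm-preserving bijection between $\sigmar$-bounded linear maps $A\to B(X)$ and bounded linear maps $A^\mr\to B(X)$, with inverse $\rho\mapsto\rho\circ\qr$. Restricting this bijection to those maps that are algebra homomorphisms --- a property preserved in both directions, since $\pi^\mr$ is a homomorphism whenever $\pi$ is (algebra part of the lemma) and $\rho\circ\qr$ is a composition of homomorphisms --- gives exactly the claimed correspondence of representations. Preservation of norms, of the set of closed invariant subspaces, and of the Banach space of bounded intertwining operators is then read off directly from the corresponding assertions in Lemma~\ref{l:completion_associated_with_seminorm}.

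Finally, for the involutive case I would verify that $\sigmar$ is a $C^*$-seminorm. If every $\pi\in\mr$ is an involutive representation on a Hilbert space, then the $C^*$-identity gives $\norm{\pi(a^*a)}=\norm{\pi(a)^*\pi(a)}=\norm{\pi(a)}^2$, whence $\sigmar(a^*a)=\sup_{\pi\in\mr}\norm{\pi(a)}^2=\sigmar(a)^2$. The alternative clause of Lemma~\ref{l:completion_associated_with_seminorm} then shows that $\ker(\sigmar)$ is self-adjoint, that $A^\mr$ is a $C^*$-algebra, and that $\qr$ as well as each $\pi^\mr$ are involutive; conversely $\rho\circ\qr$ is involutive whenever $\rho$ is. Restricting the bijection of the previous paragraph to involutive representations on Hilbert spaces in $\bclass$ therefore yields the asserted bijection with the involutive representations of the $C^*$-algebra $A^\mr$.

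The only points requiring genuine verification are these seminorm (respectively $C^*$-seminorm) properties and the tautological identification of an ``$\mr$-continuous representation'' with a ``$\sigmar$-bounded homomorphism''. I do not expect any substantial obstacle: the content is entirely carried by Lemma~\ref{l:completion_associated_with_seminorm}, which is exactly why, as the surrounding text explains, the trivial-group case can be handled without the centralizer-algebra machinery developed in the earlier sections. If anything, the subtlest bookkeeping is ensuring that both directions of the correspondence respect the involutive structure, and even this is delivered by the alternative clause of the lemma.
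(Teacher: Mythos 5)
Your proposal is correct and takes exactly the paper's route: the paper's entire proof consists of the remark that ``one simply applies Lemma~\ref{l:completion_associated_with_seminorm}'', and your write-up just fills in the details of that application (the verification that $\sigmar$ is a finite submultiplicative seminorm, resp.\ a $C^*$-seminorm in the involutive case, the identification of $\mr$-continuity with $\sigmar$-boundedness, and the inverse assignment $\rho\mapsto\rho\circ\qr$). Nothing is missing.
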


If $A$ is an involutive Banach algebra with an isometric involution and a bounded approximate identity, and $\mr$ consists of all involutive representations in Hilbert spaces, which is uniformly bounded since all such representations are contractive by \cite[Proposition~1.3.7]{dixmier}, then the crossed product $A^\mr$ is generally known as the enveloping $C^*$-algebra of $A$ as described in \cite[Section~2.7]{dixmier}.

\subsection*{Acknowledgements}
The authors thank Miek Messerschmidt for helpful comments and suggestions.

\bibliographystyle{amsplain}
\bibliography{crossedarticlebib}

\end{document}